\newcommand{\bk}{\Bbbk}
\newcommand{\Z}{\mathbb{Z}}
\newcommand{\Q}{\mathbb{Q}}
\newcommand{\F}{\mathbb{F}}
\newcommand{\bA}{\mathbb{A}}
\newcommand{\cC}{\mathcal{C}}
\newcommand{\scA}{\mathscr{A}}
\newcommand{\scB}{\mathscr{B}}
\newcommand{\scF}{\mathscr{F}}
\newcommand{\scG}{\mathscr{G}}
\newcommand{\scH}{\mathscr{H}}
\newcommand{\scL}{\mathscr{L}}
\newcommand{\Perv}{\mathsf{Perv}}
\newcommand{\pH}{{}^p\!\mathscr{H}}
\newcommand{\Spec}{\mathrm{Spec}}
\newcommand{\Dbc}{D^{\mathrm{b}}_{\mathrm{c}}}
\newcommand{\Db}{D^{\mathrm{b}}}
\newcommand{\ba}{\mathbf{a}}
\newcommand{\bb}{\mathbf{b}}
\newcommand{\bc}{\mathbf{c}}
\newcommand{\bi}{\mathbf{i}}
\newcommand{\bj}{\mathbf{j}}
\newcommand{\bh}{\mathbf{h}}
\newcommand{\sfC}{\mathsf{C}}
\newcommand{\sfZ}{\mathsf{Z}}
\newcommand{\scO}{\mathscr{O}}
\newcommand{\cE}{\mathcal{E}}
\newcommand{\cF}{\mathcal{F}}
\newcommand{\cL}{\mathcal{L}}
\newcommand{\cG}{\mathcal{G}}
\newcommand{\hGamma}{\widehat{\Gamma}}
\newcommand{\bGr}{\mathbf{Gr}}
\newcommand{\tbGr}{\widetilde{\mathbf{Gr}}}
\newcommand{\ctGr}{\widehat{\bGr}{}}
\newcommand{\simto}{\xrightarrow{\sim}}
\newcommand{\Gr}{\mathrm{Gr}}
\newcommand{\Fl}{\mathrm{Fl}}
\newcommand{\Alg}{\mathsf{Alg}}
\newcommand{\tsfC}{\widetilde{\sfC}}
\newcommand{\bt}{\mathbf{t}}
\newcommand{\nmin}{\overline{\text{\normalfont m\i n}}}
\def\lotimes{\@ifnextchar_{\@lotimessub}{\@lotimesnosub}}
\def\@lotimessub_#1{\mathchoice{\mathbin{\mathop{\otimes}^L}_{#1}}%
  {\otimes^L_{#1}}{\otimes^L_{#1}}{\otimes^L_{#1}}}
\def\@lotimesnosub{\mathbin{\mathop{\otimes}^L}}
\newcommand{\tboxtimes}{\mathbin{\widetilde{\mathord{\boxtimes}}}}
\newcommand{\Hom}{\mathrm{Hom}}
\newcommand{\Gal}{\mathrm{Gal}}
\numberwithin{equation}{section}
\newtheorem{thm}{Theorem}[section]
\newtheorem{lem}[thm]{Lemma}
\newtheorem{prop}[thm]{Proposition}
\theoremstyle{definition}
\newtheorem{defn}[thm]{Definition}
\theoremstyle{remark}
\newtheorem{rmk}[thm]{Remark}
\newtheorem{ex}[thm]{Example}
\title{Higher nearby cycles and central sheaves on affine flag varieties}
\author{Pramod N. Achar}
\address{Department of Mathematics\\
  Louisiana State University\\
  Baton Rouge, LA 70803\\
  U.S.A}
\email{pramod.achar@math.lsu.edu}
\author{Simon Riche}
\address{Universit\'e Clermont Auvergne, CNRS, LMBP, F-63000 Clermont-Ferrand, France}
\email{simon.riche@uca.fr}
\thanks{P.A. was supported by NSF Grant Nos.~DMS-1802241 and DMS-2202012. This project has received funding from the European Research Council (ERC) under the European Union's Horizon 2020 research and innovation programme (S.R., grant agreement No. 101002592).}
\begin{document}

\begin{abstract}
In this paper we generalize and study a notion of (unipotent) nearby cycles over a higher dimensional base based on Be{\u\i}linson's description of unipotent nearby cycles, following an idea of Gaitsgory. This generalization, in the setting of affine Grassmannians, is required in recent work of Bezrukavnikov--Braverman--Finkelberg--Kazhdan~\cite{bbfk}.
\end{abstract}

\maketitle

\section{Introduction}

\subsection{}

Nearby cycles form a crucial ingredient in the local geometric Langlands program, in particular in Gaitsgory's construction of central (perverse) sheaves on the affine flag variety of a reductive algebraic group~\cite{gaitsgory}. In order to prove a certain technical property of this construction required in work of Bezrukavnikov~\cite{bez}, Gaitsgory has introduced in~\cite{gaitsgory-app} a somewhat ad-hoc construction of ``nearby cycles along a 2-dimensional base.'' 
In this paper we elaborate on this idea and explain how to define much more general ``nearby cycles'' functors over any finite-dimensional affine space. This generalization is used in recent work of Bezrukavnikov--Braverman--Finkelberg--Kazhdan on local L-factors, see~\cite{bbfk}.

\subsection{Be\texorpdfstring{{\u\i}}{i}linson's unipotent nearby cycles functor}

The starting point of this construction is Be{\u\i}linson's description of the unipotent nearby cycles functor using local systems on $\bA^1 \smallsetminus \{0\}$ associated with unipotent Jordan blocks. More specifically, fix a base field $\F$, and consider a scheme $X$ of finite type over $\bA^1 = \bA^1_\F$, with structure morphism $f : X \to \bA^1$. Fix also a field $\bk$ of coefficients (of characteristic invertible in $\F$), and let $\scF$ be a perverse sheaf on $f^{-1}(\bA^1 \smallsetminus \{0\})$. Then Be{\u\i}linson's construction provides, for any integer $a \geq 1$, a complex $\Psi_{f,a}(\scF)$ on $f^{-1}(0)$ concentrated in perverse degrees $-1$ and $0$, and morphisms $\Psi_{f,a}(\scF) \to \Psi_{f,b}(\scF)$ if $a \leq b$. The main observation is that these complexes ``stabilize'' to the unipotent nearby cycles $\Psi^{\mathrm{un}}_f(\scF)$ in the sense that:
\begin{itemize}
\item
if $a \gg 0$, for any $b \geq a$ the morphism $\pH^{-1}(\Psi_{f,a}(\scF)) \to \pH^{-1}(\Psi_{f,b}(\scF))$ is an isomorphism, and both objects identify with $\Psi^{\mathrm{un}}_f(\scF)$;
\item
for any $a$,
if $b \gg a$ the morphism $\pH^{0}(\Psi_{f,a}(\scF)) \to \pH^{0}(\Psi_{f,b}(\scF))$ vanishes.
\end{itemize}

\subsection{Higher dimensional version}

Gaitsgory's construction in~\cite{gaitsgory-app} is based on the same idea, but now for a scheme $X$ (of finite type) over $\bA^2$, and produces complexes indexed by a pair of nonnegative integers. In this generality, one cannot expect the same stabilization phenomenon as above for all perverse sheaves; but Gaitsgory simply defines his 2-dimensional nearby cycles functor using a homotopy colimit of these complexes. In general, this construction forces one to leave the constructible derived category; to avoid this problem, in our account of Gaitsgory's construction in~\cite{arbook} we proposed instead to restrict to complexes such that a stabilization property as above occurs. So, for us the $2$-dimensional nearby cycles functor is only ``partially defined.''
The starting point of this paper is the observation that this definition can be phrased so that it makes sense for a scheme $X$ (of finite type) over any affine space $\bA^d$. 

\subsection{Composition of higher nearby cycles functors}

The application of nearby cycles over a 2-dimensional base in~\cite{gaitsgory-app} uses a comparison with two related operations: (a) compute the nearby cycles of the given perverse sheaf successively along the 2 factors in $\bA^2$; (b) restrict the perverse sheaf to the preimage of the diagonal copy of $\bA^1$, and then compute the nearby cycles of (a shift of) this complex. Gaitsgory shows that his $2$-dimensional nearby cycles complex maps to each of the complexes obtained in (a) or (b), and that these maps are isomorphisms in the specific setting required in~\cite{bez}. In~\cite{bbfk} the authors consider (in a related setting) a variant of (a) where one considers a complex over $\bA^d$ and computes nearby cycles along each $\bA^1$-factor successively, and they assert that the result does not depend on the choice of order on the coordinates. To justify this assertion, here we introduce the (partially defined) $d$-dimensional nearby cycles functor, and show that, in the setting of~\cite{bbfk}, each of the complexes identifies canonically with the image of the inital complex under our functor.

To justify this fact, we interpret nearby cycles along each factor as the case $d=1$ of our construction, and show roughly that a composition of $d$- and $e$-dimensional nearby cycles functors receives a canonical map from a $(d+e)$-dimensional nearby cycles functor, which in the setting of~\cite{bbfk} is an isomorphism. In fact, in the body of the paper we consider a more general construction of higher nearby cycles and their compositions, which also covers the construction in (b) above, and thereby (in our opinion) clarifies the general picture. See Definition~\ref{def:main} for the main definition, and~\eqref{eqn:commnc-compose} for the construction involving the composition of such functors.

\subsection{Compatibilities}

In full generality, it is not reasonable to expect that the morphism in~\eqref{eqn:commnc-compose} is always an isomorphism. As in~\cite{gaitsgory-app}, what we show here is that the constructions of higher nearby cycles and of the morphisms considered above are compatible with smooth pullback and proper pushforward in an appropriate sense (see Lemmas~\ref{lem:compat-smooth}, \ref{lem:compat-proper}, \ref{lem:compat-compose-smooth} and \ref{lem:compat-compose-proper} for precise statements), and then study a product-type situation (see~\S\ref{ss:product}). This is sufficient to show that the $d$-dimensional nearby cycles are well defined in the setting of~\cite{bbfk}, and that their formation is compatible with composition, which amounts to the statement these authors require. See Theorem~\ref{thm:nc-main-new} and~\S\ref{ss:groupoid} for precise statements.

\subsection{Further comments}

There exists a general theory of nearby and vanishing cycles over general bases, which is much more elaborate than the version we consider here; see~\cite{illusie} for a brief account. In the companion paper~\cite{salmon2}, A.~Salmon explains the relationship between the latter theory and ours.

As explained above, the construction considered here is an extension of our study in~\cite[\S 9.4]{arbook}. There is one important difference in our treatment though: in~\cite{arbook} we introduced a general condition of ``iterated cleanness'' implying that the $2$-dimensional nearby cycles complex is well defined, and showed that this condition is satisfied in the setting at hand. It is not obvious (to us) how to extend this condition over a higher-dimensional base; here we bypass this question by proving a compatibility statement with proper pushforward which is stronger than its counterpart in~\cite{arbook} (compare Lemma~\ref{lem:compat-proper} and~\cite[Proposition~9.4.2]{arbook}) and allows us to reduce the question to the product-type setting.

In this paper we work in the setting of \'etale sheaves on schemes over fields, in order to meet the setting of~\cite{bbfk}. However all our statements have obvious variants for ``usual'' sheaves on complex algebraic varieties endowed with their analytic topology (and coefficients in a field), and all our proofs apply in both settings.

After we completed a preliminary version of this paper, it was pointed out to us that a special case of our construction had already been introduced by A. Salmon, with applications to the cohomology of shtukas; see~\cite{salmon} and Example~\ref{ex:Q-empty}.

\subsection{Acknowledgements}

We thank Roman Bezrukavnikov and Michael Finkelberg for suggesting that our study of Gaitsgory's construction could be used to prove the statement they use in~\cite{bbfk}, which was the starting point of this work, and for useful comments on a preliminary version. We thank the anonymous referee for detailed comments that helped improving the exposition of the paper. Finally, we thank Andrew Salmon for helpful discussions, and for agreeing to expand his comments into the paper~\cite{salmon2}.

\section{Preliminaries}

\subsection{Pointed maps and associated linear morphisms}
\label{ss:pointed-maps}

We fix a base field $\F$. Below, by a ``scheme'' we will mean an $\F$-scheme of finite type.
If $P$ is a finite set, we may consider the affine space
\[
\bA^P = \mathrm{Spec}(\F[X_p : p \in P])
\]
with coordinates indexed by $P$.  The \emph{generic part} of this affine space, denoted by $\bA^P_\eta$, is the open subscheme where no coordinates vanish:
\[
\bA^P_\eta = \{ (x_p)_{p \in P} \mid \text{$x_p \ne 0$ for all $p$} \}.
\]
(In case $P=\varnothing$, we interpret these definitions as $\bA^\varnothing = \bA^\varnothing_\eta = \mathrm{Spec}(\F)$.) 

For any finite set $P$ we
let $P_*$ denote the disjoint union $P \amalg \{*\}$, where $*$ denotes a new element. For finite sets $P$ and $Q$, a \emph{pointed map}
$\alpha: P_* \to Q_*$
is a function that satisfies $\alpha(*) = *$.  If $\alpha: P_* \to Q_*$ is a pointed map, there is an induced linear map of affine spaces
$\bar\alpha: \bA^Q \to \bA^P$
given by
\[
\bar\alpha((x_q)_{q \in Q}) = (y_p)_{p \in P} 
\qquad\text{where}\qquad
y_p = 
\begin{cases}
x_{\alpha(p)} & \text{if $\alpha(p) \in Q$,} \\
0 & \text{if $\alpha(p) = *$.}
\end{cases}
\]
We also set 
$\bA^P_{\eta,\alpha} = \{(x_p)_{p \in P} \in \bA^P \mid x_p \neq 0 \text{ if $\alpha(p) \neq *$}\}$; then the restriction of $\bar\alpha$ to $\bA^Q_\eta$ factors through a morphism $\bar\alpha_\eta : \bA^Q_\eta \to \bA^P_{\eta,\alpha}$.

\begin{rmk}
\label{rmk:bar-alpha}
 From the definition we see that $\bar\alpha$ is injective if $\alpha$ is surjective, and surjective if $\alpha$ is injective.  For arbitrary $\alpha$, one can decompose the situation into a combination of these two settings as follows: set $R = \alpha(P) \cap Q$. Then $\alpha$ decomposes in the obvious way as a composition of pointed maps
 \[
  P_* \xrightarrow{\alpha_1} R_* \xrightarrow{\alpha_2} Q_*,
 \]
and we have $\bar\alpha = \bar\alpha_1 \circ \bar\alpha_2$ where $\bar\alpha_1$ is injective and $\bar\alpha_2$ is surjective.
\end{rmk}

\subsection{Scheme morphisms associated with pointed maps}
\label{ss:schemes}

Now suppose we have a scheme $X$ equipped with a map
$f: X \to \bA^P$.
The \emph{generic part} of $X$ is defined by
$X_\eta = X \times_{\bA^P} \bA^P_\eta$;
the natural morphism $X_\eta \to \bA^P_\eta$ will be denoted $f_\eta$, and the embedding $X_\eta \to X$ will be denoted $\bj_X$ (or $\bj$ when no confusion is likely).

If $\alpha: P_* \to Q_*$ is a pointed map, then one can consider the schemes
\[
X^\alpha := X \times_{\bA^P} \bA^Q
\qquad\text{and}\qquad
X^\alpha_\eta := X \times_{\bA^P} \bA^Q_\eta
\]
where the fiber products are taken with respect to $\bar\alpha: \bA^Q \to \bA^P$ and its restriction to $\bA^Q_\eta$. The natural morphism $X^\alpha \to X$ will be denoted $\bi'_{X,\alpha}$ (or $\bi'_\alpha$), its restriction to $X^\alpha_\eta$ will be denoted $\bi_{X,\alpha}$ (or $\bi_\alpha$) and we will denote by
\[
f^\alpha : X^\alpha \to \bA^Q
\]
the natural projection morphism. By Remark~\ref{rmk:bar-alpha}, $\bi'_{\alpha}$ is a closed immersion if $\alpha$ is surjective, and is smooth and surjective if $\alpha$ is injective; moreover we have $\bi_\alpha = \bi'_\alpha \circ \bj_{X^\alpha}$. Note also that if $\alpha^{-1}(*)=\{*\}$ then $\bi_\alpha$ factors through a morphism
\[
 \bi_\alpha'' : X^\alpha_\eta \to X_\eta.
\]

\subsection{Jordan block local systems}
\label{ss:jordan-intro}

Let $\ell$ be a prime different from the characteristic of $\F$, and let $\bk$ be a topological field of one of the following two forms:
\begin{itemize}
\item an algebraic extension of $\Q_\ell$, equipped with the $\ell$-adic topology;
\item an algebraic extension of $\F_\ell$, equipped with the discrete topology.
\end{itemize}
Our goal in the rest of this section is to describe the construction of a family of indecompo\-sable $\bk$-local systems on $\bA^1 \smallsetminus \{0\}$ on which a generator of the geometric fundamental group acts by a unipotent Jordan block.

This construction is certainly well known when $\bk$ has characteristic $0$ (see, for instance,~\cite{beilinson,morel}), and it is relatively easy when $\F$ is separably closed (so that there is no Galois action to consider).  It is probably known in the generality we consider here, but as we could not find a suitable reference, we include the details. The construction uses identities involving some polynomials with rational coefficients, which are discussed in the next subsection.

\subsection{\texorpdfstring{$\Z$}{Z}-closed polynomials and binomial identities}
\label{ss:binomial}

Let us say that a polynomial $f(x) \in \Q[x]$ is \emph{$\Z$-closed} if it has the property that for all nonnegative integers $n \ge 0$, we have $f(n) \in \Z$.  Of course, any polynomial $f \in \Q[x]$ determines a continuous (for the $\ell$-adic topology) function $\Q_\ell \to \Q_\ell$.  Because the nonnegative integers $\Z_{\ge 0}$ are dense in $\Z_\ell$, if $f$ is $\Z$-closed, then it restricts to a function $\Z_\ell \to \Z_\ell$.  Furthermore, with $\bk$ as above, any $\Z$-closed polynomial $f \in \Q[x]$ determines a continuous function $f: \Z_\ell \to \bk$.

Any polynomial with coefficients in $\Z$ is obviously $\Z$-closed, but there are others: for instance, the polynomials $\delta_0, \delta_1, \ldots$ given by
\[
\delta_r(x) = \binom{x}{r} = \frac{x(x-1)\cdots (x-r+1)}{r!}.
\]
(When $r=0$, this expression is interpreted as $1$.)
For integers $r, s \ge 0$, these polynomials satisfy
\begin{equation}\label{eqn:binom-product}
\delta_r(x) \delta_s(x) = \sum_{i=0}^{\min \{r,s\}} \frac{(r+s-i)!}{(r-i)!(s-i)!i!} \delta_{r+s-i}(x).
\end{equation}
To prove this, observe that if we replace $x$ by a nonnegative integer $n$, the left-hand side is the coefficient of $u^rv^s$ in $(1+u)^n(1+v)^n$, which can also be written as
\[
(1 + u + v + uv)^n = \sum_{k=0}^n \binom{n}{k}(u+v+uv)^k = \sum_{k=0}^n \binom{n}{k} \sum_{\substack{g,h,i \ge 0 \\ g+h+i = k}} \frac{k!}{g!h!i!} u^{g+i} v^{h+i}.
\]
The coefficient of $u^rv^s$ is found by taking those terms where $g = r-i$, $h = s-i$, and $k = r+ s-i$.  This yields the right-hand side of~\eqref{eqn:binom-product}.

Next, consider the two-variable polynomial $\delta_r(tx) = \binom{tx}{r} \in \Q[x,t]$.  Since $\delta_0, \delta_1,  \ldots$ form a $\Q$-basis for $\Q[x]$, there is a unique expression of the form
\begin{equation}\label{eqn:delta-c}
\delta_r(tx) = \sum_{j=0}^r c_r^j(t) \delta_j(x),
\end{equation}
for some polynomials $c_r^j(t) \in \Q[t]$.  Comparing the coefficients of $x^r$ and evaluating at $x=0$, we see that
\begin{equation}\label{eqn:c-extreme}
c_r^r(t) = t^r \in \Z[t]
\qquad\text{and}\qquad
c_r^0(t) = \begin{cases}
1 & \text{if $r = 0$,} \\
0 & \text{if $r > 0$.}
\end{cases}
\end{equation}
In particular, these polynomials are $\Z$-closed.  

We claim that the $c_r^j$ are in fact $\Z$-closed for all $j$, i.e., that $c_r^j(m) \in \Z$ for integers $m \ge 0$.  We prove this by induction on $m$: if $r>0$ we obviously have $c_r^j(0) = 0$, and for $m > 0$, by Vandermonde's identity, we have
\[
\binom{mx}{r} = \sum_{g = 0}^r \binom{(m-1)x}{g} \binom{x}{r-g}
= \sum_{g=0}^r \sum_{h=0}^g c_g^h(m-1) \delta_h(x) \delta_{r-g}(x).
\]
Expanding the right-hand side using~\eqref{eqn:binom-product} and comparing coefficients of $\delta_j$, we find that
\[
c_r^j(m) = \sum_{\substack{0 \le h \le g \le r \\ 0 \le r-g+h-j \le \min\{h,r-g\}}} c_g^h(m-1) \frac{j!}{(j+g-r)!(j-h)!(r-g+h-j)!} \in \Z.
\]

\subsection{Functions on the Tate module}
\label{ss:functions-Zl}

Let $\overline{\F}$ be a separable closure of $\F$, and let $\Z_\ell(1)$ be the Tate module of $\overline{\F}{}^\times$, i.e., the inverse limit of the groups of $\ell^n$-th roots of unity in $\overline{\F}$. This is naturally a free $\Z_\ell$-module of rank $1$.  Let
\[
\cC(\Z_\ell(1),\bk) = \text{set of continuous functions $\Z_\ell(1) \to \bk$}.
\]
Here, ``continuous'' should be understood with respect to the profinite topology on $\Z_\ell(1)$ and the topology on $\bk$ indicated in~\S\ref{ss:jordan-intro}.  This set is a ring under pointwise multiplication.

The group $\Gal(\overline{\F}|\F)$ acts on $\Z_\ell(1)$ by the cyclotomic character $\chi_\ell: \Gal(\overline{\F}|\F) \to \Z_\ell^\times$.  There is an induced action of the semidirect product $\Gal(\overline{\F}|\F) \ltimes \Z_\ell(1)$ on $\cC(\Z_\ell(1), \bk)$, given by
\[
((\gamma, g) \cdot f)(h) = f(\chi_\ell(\gamma)^{-1}(h - g))
\]
for $\gamma \in \Gal(\overline{\F}|\F)$ and $g, h \in \Z_\ell(1)$.  This is an action by ring automorphisms.

To write down some explicit elements in $\cC(\Z_\ell(1), \bk)$, let us choose a generator $g$ for $\Z_\ell(1)$ as a $\Z_\ell$-module. Thus every element in $\Z_\ell(1)$ can be written uniquely as $ng$ for some $n \in \Z_\ell$.  Then for $r \geq 0$ there is a continuous function
\[
\varphi^g_r: \Z_\ell(1) \to \bk
\qquad\text{given by}\qquad
\varphi^g_r(ng) = (-1)^r \delta_r(n).
\]

\begin{lem}
\label{lem:jordan-basic}
Let $g \in \Z_\ell(1)$ be an element that generates $\Z_\ell(1)$ as a $\Z_\ell$-module, and for $a \geq 0$ let
\[
L_a = \mathrm{span}_\bk \{\varphi^g_0, \varphi^g_1, \ldots, \varphi^g_{a-1}\} \subset \cC(\Z_\ell(1), \bk).
\]
\begin{enumerate}
\item The subspace $L_a \subset \cC(\Z_\ell(1),\bk)$ is independent of the choice of $g$.\label{it:la-indep}
\item The subspace $L_a$ is stable under the action of $\Gal(\overline{\F}|\F) \ltimes \Z_\ell(1)$.\label{it:la-stab}
\item We have $\dim L_a = a$.\label{it:la-dim}
\item Any generator of $\Z_\ell(1)$ acts on $L_a$ by a unipotent Jordan block.\label{it:la-jordan}
\item For any integer $a \ge 1$, there is a canonical short exact of $\Gal(\overline{\F}|\F) \ltimes \Z_\ell(1)$-modules\label{it:la-coker}
\[
0 \to L_{a-1} \to L_a \to \bk(-a+1) \to 0.
\]
\item For any two integers $a, b \ge 1$, there is a canonical map of $\Gal(\overline{\F}|\F) \ltimes \Z_\ell(1)$-modules\label{it:la-mult}
$L_a \otimes_\bk L_b \to L_{a+b-1}$
whose image contains $L_{\max \{a,b\}}$.
\end{enumerate}
\end{lem}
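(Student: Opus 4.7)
My plan is to reduce all six assertions to three transformation formulas describing how the functions $\varphi^g_r$ behave under (i) rescaling $g \mapsto tg$ for $t \in \Z_\ell^\times$, (ii) translation by $h \in \Z_\ell(1)$, and (iii) the Galois action. All three formulas come directly from the binomial-coefficient manipulations of \S\ref{ss:binomial}, and $\Z$-closedness of the polynomials $c_r^j$ ensures the coefficients land in $\bk$.

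First I would prove the rescaling formula: by~\eqref{eqn:delta-c}, with $g' = tg$,
\[
\varphi^g_r = \sum_{j=0}^r (-1)^{r-j} c_r^j(t)\, \varphi^{g'}_j,
\]
and the change-of-basis matrix is upper triangular with invertible diagonal entries $c_r^r(t) = t^r$ by~\eqref{eqn:c-extreme}, giving~\eqref{it:la-indep}. The Galois action of $\gamma$ on $\cC(\Z_\ell(1),\bk)$ is precomposition with $h \mapsto \chi_\ell(\gamma)^{-1}h$, which, expressed in coordinates relative to $g$, is the same rescaling formula with $t = \chi_\ell(\gamma)^{-1}$. For translation by $h = mg$, Vandermonde's identity $\delta_r(n-m) = \sum_{j=0}^r \delta_{r-j}(-m)\delta_j(n)$ yields a similar upper-triangular formula, proving~\eqref{it:la-stab}. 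Part~\eqref{it:la-dim} is then immediate by evaluating the candidate basis at $0g, g, 2g, \ldots, (a-1)g$: the matrix $(\delta_j(k))_{k,j}$ is lower triangular with $1$'s on the diagonal. For~\eqref{it:la-jordan}, the identity $\delta_r(m-1) = \sum_{j=0}^r (-1)^{r-j}\delta_j(m)$ yields $g \cdot \varphi^g_r = \sum_{j=0}^r \varphi^g_j$, so $(g-1)\varphi^g_r \equiv \varphi^g_{r-1} \pmod{L_{r-1}}$; hence $(g-1)^{a-1}\varphi^g_{a-1} = \varphi^g_0 \ne 0$, so $g$ acts as a single Jordan block.

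For~\eqref{it:la-coker}, the subspace $L_{a-1}$ is stable by the formulas above, and on the quotient $\Z_\ell(1)$ acts trivially while the diagonal entry of the Galois action coming from the rescaling formula is $\chi_\ell(\gamma)^{-(a-1)}$, yielding the Tate twist $\bk(-a+1)$. For~\eqref{it:la-mult} I would use the pointwise multiplication $\cC(\Z_\ell(1),\bk)^{\otimes 2} \to \cC(\Z_\ell(1),\bk)$, which is automatically $\Gal(\overline{\F}|\F) \ltimes \Z_\ell(1)$-equivariant since that group acts by ring automorphisms; substituting~\eqref{eqn:binom-product} gives
\[
\varphi^g_r \cdot \varphi^g_s = \sum_{i=0}^{\min\{r,s\}} (-1)^i \frac{(r+s-i)!}{(r-i)!(s-i)!\,i!}\, \varphi^g_{r+s-i},
\]
whose indices are at most $a+b-2$, so multiplication restricts to $L_a \otimes L_b \to L_{a+b-1}$. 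Since $\varphi^g_0$ is the constant function $1$, the image contains $\varphi^g_0 \cdot L_{\max\{a,b\}} = L_{\max\{a,b\}}$.

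I do not expect any deep conceptual obstacle: everything reduces to the polynomial identities already prepared in~\S\ref{ss:binomial}. The main labor will be careful sign bookkeeping and checking that each construction is invariant enough to be truly independent of $g$; in particular, the canonicity claims in~\eqref{it:la-coker} and~\eqref{it:la-mult} rely essentially on the well-definedness statement~\eqref{it:la-indep}, which is why I would establish that first and then reuse it throughout.
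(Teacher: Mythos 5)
Your proposal is correct and follows essentially the same route as the paper: everything is reduced to the three change-of-basis/translation/multiplication formulas coming from the binomial identities of \S\ref{ss:binomial}, with $\Z$-closedness guaranteeing integrality of the coefficients. The only differences are cosmetic: for part~\eqref{it:la-dim} the paper uses a minimal-dependence-relation argument (repeatedly applying $f \mapsto f - (-g)\cdot f$) rather than your evaluation-matrix triangularity observation, and for part~\eqref{it:la-mult} the paper argues that the image is a subrepresentation containing $\varphi^g_{a-1}$ and $\varphi^g_{b-1}$, whereas your use of $\varphi^g_0 \cdot L_{\max\{a,b\}} = L_{\max\{a,b\}}$ is a somewhat cleaner shortcut; both variants are fine.
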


\begin{proof}
\eqref{it:la-indep} If $g'$ is another generator for $\Z_\ell(1)$ as a $\Z_\ell$-module, then we have $g' = ug$ for some $u \in \Z_\ell^\times$.  It follows from~\eqref{eqn:delta-c} that
\[
\varphi_r^{g'} = \sum_{j=0}^{r} (-1)^{j+r} c_r^j(u) \varphi_j^g.
\]
This shows that the span of $\varphi_0^{g'}, \ldots, \varphi_{a-1}^{g'}$ is contained in the span of $\varphi_0^g, \ldots, \varphi_{a-1}^g$. By symmetry the opposite containment also holds, so $L_a$ is independent of the choice of $g$.

\eqref{it:la-stab}
We have
\[
((-g) \cdot \varphi_r^g)(ng) = \varphi_r^g((n+1)g) = (-1)^r \binom{n+1}{r} = (-1)^r\binom{n}{r-1} + (-1)^r\binom{n}{r}
\]
or, in other words,
\begin{equation}\label{eqn:gen-varphi}
(-g) \cdot \varphi_r^g = 
\begin{cases}
\varphi_0^g & \text{if $r = 0$,} \\
\varphi_r^g - \varphi_{r-1}^g & \text{if $r > 0$.}
\end{cases}
\end{equation}
This formula shows that $(-g)$ preserves $L_a$ and acts by a unipotent operator.  It follows that the cyclic subgroup $\Z g\subset \Z_\ell(1)$ preserves $L_a$ as well.  Since $\Z g$ is dense in $\Z_\ell(1)$, continuity considerations show that $L_a$ is preserved by all of $\Z_\ell(1)$.

Similarly, for $\gamma \in \Gal(\overline{\F}|\F)$, we have 
\begin{equation}\label{eqn:gal-varphi}
\gamma \cdot \varphi_r^{g} = \varphi_r^{\chi_\ell(\gamma)^{-1}g} = \sum_{j=0}^r (-1)^{j+r} c_r^j(\chi_\ell(\gamma)^{-1}) \varphi_j^g.
\end{equation}
Thus, $L_a$ is stable under $\Gal(\overline{\F}|\F)$, and hence under $\Gal(\overline{\F}|\F) \ltimes \Z_\ell(1)$.

\eqref{it:la-dim}
We wish to show that the functions $\varphi_0^g, \varphi_1^g, \ldots$ are linearly independent.  If not, find some nontrivial linear dependence relation
\[
b_0 \varphi_0^g + \cdots + b_m \varphi_m^g = 0
\]
with $b_0, \ldots, b_m \in \bk$.  We may assume that this relation is chosen so that $m$ as small as possible; then $b_m \ne 0$ and $m > 0$.  Apply the operator $f \mapsto f - (-g)\cdot f$ to this equation.  By~\eqref{eqn:gen-varphi}, we obtain
\[
b_1 \varphi_0^g + b_2\varphi_1^g + \cdots + b_m \varphi_{m-1}^g = 0,
\]
contradicting the minimality of $m$. We conclude that $\dim L_a = a$.  

\eqref{it:la-jordan}
This follows from~\eqref{eqn:gen-varphi} and the independence on $g$ (see~\eqref{it:la-indep}).

\eqref{it:la-coker}
By construction we have an embedding of representations $L_{a-1} \subset L_a$.
If $g$ is a generator of $\Z_\ell(1)$ as a $\Z_\ell$-module, then it is clear from~\eqref{eqn:gen-varphi} that $(-g)$ acts trivially on the quotient $L_a/L_{a-1}$.  Therefore, the group $\Z g \subset \Z_\ell(1)$ acts trivially, and by continuity, all of $\Z_\ell(1)$ acts trivially.

On the other hand, by~\eqref{eqn:gal-varphi}, an element $\gamma \in \Gal(\overline{\F}|\F)$ acts on $L_a/L_{a-1}$ by the scalar $c_{a-1}^{a-1}(\chi_\ell(\gamma)^{-1}) = \chi_\ell(\gamma)^{-a+1}$ (see~\eqref{eqn:c-extreme}). We conclude that $L_a/L_{a-1} \cong \bk(-a+1)$.

\eqref{it:la-mult}
Consider the multiplication map $\cC(\Z_\ell(1),\bk) \otimes_\bk \cC(\Z_\ell(1),\bk) \to \cC(\Z_\ell(1),\bk)$.  This map is compatible with the action of $\Gal(\overline{\F}|\F) \ltimes \Z_\ell(1)$.  It follows from~\eqref{eqn:binom-product} that the multiplication map restricts to the desired map
\[
L_a \otimes_\bk L_b \to L_{a+b-1}.
\]
The image of this map contains $\varphi_a = \varphi_a \varphi_0$ and $\varphi_b = \varphi_0 \varphi_b$.  By~\eqref{eqn:gen-varphi}, it contains all of $L_a$ and $L_b$, and thus it contains $L_{\max \{a,b\}}$.
\end{proof}

In the next lemma (which will not be used below) we discuss the case when $\bk$ has characteristic $0$; in particular, these results show that in this case our construction recovers the local systems considered in~\cite{beilinson,morel}.

\begin{lem}
\label{lem:jordan-semis}
Assume that $\bk$ has characteristic $0$.
\begin{enumerate}
\item 
\label{it:la-exp}
For $r \ge 0$, there is an isomorphism of $\Gal(\overline{\F}|\F)$-modules
\[
L_a \cong \bk \oplus \bk(-1) \oplus \cdots \oplus \bk(-a+1).
\]
In terms of the right-hand side, the action of $g \in \Z_\ell(1)$ is given by
\begin{equation}\label{eqn:la-exp}
g \mapsto \exp
\left[\begin{smallmatrix}
0 & \langle -g,-\rangle\\
& 0 & \langle -g,-\rangle \\
&& \ddots & \ddots \\
&&& 0 & \langle -g,-\rangle \\
&&&& 0
\end{smallmatrix}\right]
\end{equation}
where $\langle{-},{-}\rangle: \Z_\ell(1) \otimes_{\Z_\ell} \bk(-r) \to \bk(-r+1)$ is the natural pairing map.
\item
\label{it:la-ses}
For $a \le b$, there is a short exact sequence of $\Gal(\overline{\F}|\F) \ltimes \Z_\ell(1)$-modules
\[
0 \to L_a \to L_b \to L_{b-a}(-a) \to 0.
\]
\end{enumerate}
\end{lem}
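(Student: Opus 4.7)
The plan is to use the commutative ring structure on $\cC(\Z_\ell(1),\bk)$ to exhibit an explicit basis of $L_a$ realizing the splitting. Set $\xi := \varphi_1^g$. By~\eqref{eqn:gal-varphi} together with~\eqref{eqn:c-extreme} one has $\gamma \cdot \xi = \chi_\ell(\gamma)^{-1} \xi$, so $\xi$ generates a Galois submodule isomorphic to $\bk(-1)$; since Galois acts by ring automorphisms on $\cC(\Z_\ell(1),\bk)$, each $\xi^r$ generates a copy of $\bk(-r)$. The hypothesis that $\bk$ has characteristic zero is precisely what lets one pass between the two $\bk$-bases $\{\delta_j(n) : 0 \le j \le a-1\}$ and $\{n^j : 0 \le j \le a-1\}$ of the space of polynomials of degree $\le a-1$, so $\{1,\xi,\xi^2,\ldots,\xi^{a-1}\}$ spans the same subspace of $\cC(\Z_\ell(1),\bk)$ as $\{\varphi_0^g, \ldots, \varphi_{a-1}^g\}$, namely $L_a$. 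This gives the Galois-module decomposition in~\eqref{it:la-exp}.

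For the explicit matrix~\eqref{eqn:la-exp} I would pass to the renormalized basis $e_r := \xi^r/r!$ of $L_a$; each $e_r$ still generates $\bk(-r)$. Using that the $\Z_\ell(1)$-action on $\cC(\Z_\ell(1),\bk)$ is multiplicative and $g \cdot \xi = \xi + 1$ (the mirror of~\eqref{eqn:gen-varphi}),
\[
g \cdot e_r = \frac{(\xi+1)^r}{r!} = \sum_{j=0}^{r} \frac{1}{(r-j)!}\, e_j,
\]
which is precisely the matrix of $\exp(N)$ for the superdiagonal shift $N: e_r \mapsto e_{r-1}$. Identifying $N$ with the pairing map $\langle -g,-\rangle : \Z_\ell(1) \otimes_{\Z_\ell} \bk(-r) \to \bk(-r+1)$ amounts to choosing generators of the $\bk(-r)$ compatibly with the natural isomorphisms $\bk(-r) \otimes \bk(1) \simeq \bk(-r+1)$.

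For~\eqref{it:la-ses}, working in the bases $\{e_r\}_{r=0}^{b-1}$ of $L_b$ and $\{e'_j\}_{j=0}^{b-a-1}$ of $L_{b-a}$ constructed above, and fixing a generator $e$ of $\bk(-a)$, I would define $\phi : L_b \to L_{b-a}(-a)$ by $\phi(e_r) = 0$ for $r < a$ and $\phi(e_{a+j}) = e'_j \otimes e$. Galois-equivariance is immediate from matching weights $-a-j$ on both sides. Since $\Z_\ell(1)$ acts trivially on $\bk(-a)$, the exponential formula of part~\eqref{it:la-exp} applied to both sides yields
\[
\phi(g \cdot e_{a+j}) = \sum_{i=0}^{j} \frac{e'_i \otimes e}{(j-i)!} = g \cdot (e'_j \otimes e),
\]
so $\phi$ is also $\Z_\ell(1)$-equivariant. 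Its kernel is visibly $L_a$, yielding the claimed short exact sequence.

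The only delicate point is the identification of the shift operator $N$ with the canonical pairing $\langle -g,-\rangle$ in part~\eqref{it:la-exp}, which is a matter of consistently normalizing the chosen generators of the Tate twists; the remaining steps reduce to elementary polynomial identities and present no real obstacle.
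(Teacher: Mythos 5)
Your proposal is correct and follows essentially the same route as the paper: both identify $L_a$ with polynomial functions of degree~$<a$ on $\Z_\ell(1)$, realize each $\bk(-r)$ inside $\cC(\Z_\ell(1),\bk)$ as the span of an $r$-th-power-like function, and compute the monodromy as translation in the polynomial variable (your $e_r = \xi^r/r!$ coincides with the paper's $\nu_r^g$ up to the sign $(-1)^r$, and your $\phi$ is the paper's differentiation operator $\partial^a$). The one step you flag as ``delicate'' and do not carry out --- matching the shift $N$ with the canonical pairing $\langle -g,-\rangle$ --- is exactly the part the paper handles explicitly by constructing the map $\theta_r:\bk(-r)\to\cC(\Z_\ell(1),\bk)$ and deriving $\langle g,\nu_r^g\rangle=\nu_{r-1}^g$, so you should spell that out rather than leave it as a normalization remark.
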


\begin{proof}
By definition we have
\[
\bk(-r) = \bk \otimes_{\Z_\ell} \Hom_{\Z_\ell}(\Z_\ell(r),\Z_\ell) = \bk \otimes_{\Z_\ell} \Hom_{\Z_\ell}(\underbrace{\Z_\ell(1) \otimes \cdots \otimes \Z_\ell(1)}_{\text{$r$ copies}}, \Z_\ell).
\]
Define a $\bk$-linear map $\theta_r: \bk(-r) \to \cC(\Z_\ell(1),\bk)$ as follows: for $u \in \Hom_{\Z_\ell}(\Z_\ell(r),\Z_\ell)$, we let $\theta_r(1 \otimes u): \Z_\ell(1) \to \bk$ be the function given by
\[
\theta_r(1 \otimes u)(g) = \textstyle\frac{1}{r!} u(g \otimes \cdots \otimes g).
\]
Then $\theta_r$ is $\Gal(\overline{\F}|\F)$-equivariant and nonzero.  We claim that $\theta_r$ induces an isomorphism
\begin{equation}\label{eqn:twist-power}
\bk(-r) \cong \left\{ f \in \cC(\Z_\ell(1),\bk) \,\Big|\, 
\begin{array}{@{}c@{}}
\text{$f(ng) = n^r f(g)$ for all} \\ \text{$n \in \Z_\ell$ and all $g \in \Z_\ell(1)$}
\end{array} \right\}.
\end{equation}
Indeed, it is clear that the image of $\theta_r$ is contained in the right-hand side of~\eqref{eqn:twist-power}.  Moreover, the latter set is $1$-dimensional, because any element satisfying this condition is determined by its value on some generator of $\Z_\ell(1)$ as a $\Z_\ell$-module.

Let us describe the pairing $\langle{-},{-}\rangle: \Z_\ell(1) \otimes_{\Z_\ell} \bk(-r) \to \bk(-r+1)$ in terms of $\theta_r$ and $\theta_{r-1}$.  For $g \in \Z_\ell(1)$ and $f$ belonging to the right-hand side of~\eqref{eqn:twist-power}, we have
\begin{equation}\label{eqn:twist-pairing}
\langle g, f\rangle(g) = r f(g).
\end{equation}

For the remainder of this proof, we assume that $g$ is a generator of $\Z_\ell(1)$ as a $\Z_\ell$-module.  Consider the continuous function
\[
\nu_r^g: \Z_\ell(1) \to \bk
\qquad\text{given by}\qquad
\nu_r^g(ng) = n^r / r!.
\]
It is clear that $\nu_r^g$ spans the right-hand side of~\eqref{eqn:twist-power}. On the other hand, $\nu_0^g, \ldots, \nu_{a-1}^g$ is a basis for $L_a$, so $L_a \cong \bk \oplus \cdots \oplus \bk(-a+1)$ as a $\Gal(\overline{\F}|\F)$-module. By~\eqref{eqn:twist-pairing}, $\langle g, \nu_r^g \rangle(g) = 1/(r-1)!$, so
\[
\langle g, \nu_r^g\rangle = \nu_{r-1}^g.
\]

In the usual $\Z_\ell(1)$-action on $L_a$, the action of $g$ on $\nu_r^g$ is given by
\[
(g \cdot \nu^g_r)(ng) = \frac{(n-1)^r}{r!} 
=\sum_{j=0}^r \frac{(-1)^j}{j!} \nu^g_{r-j}(ng) 
= \sum_{j=0}^r \frac{1}{j!} \underbrace{\langle -g, \langle \cdots \langle -g,}_{\text{$j$ times}} \nu^g_r \rangle \cdots \rangle(ng).
\]
This shows that~\eqref{eqn:la-exp} holds when $g$ is a generator of $\Z_\ell(1)$ as a $\Z_\ell$-module.  It then holds in general by continuity.

\eqref{it:la-ses}
We continue to let $g$ be a fixed generator of $\Z_\ell(1)$ as a $\Z_\ell$-module.  Let $\partial^a: L_b \to L_{b-a}$ be the map given by
\[
\partial^a(\nu_r^g) = 
\begin{cases}
\nu_{r-a}^g & \text{if $a \le r \le b-1$,} \\
0 & \text{otherwise.}
\end{cases}
\]
If we identify $L_b$ with the space of polynomial functions $\Z_\ell \to \bk$ of degree${}\le b-1$, then $\partial^a$ is the $a$-fold differentiation operator.  It is easy to check that $\partial^a$ is $\Z_\ell(1)$-equivariant, but not $\Gal(\overline{\F}|\F)$-equivariant.  Specifically, for $\gamma \in \Gal(\overline{\F}|\F)$ and $f \in L_b$, we have
\[
\partial^a(\gamma \cdot f) = \chi_\ell(\gamma)^{-a} \gamma \cdot \partial^a(f).
\]
We conclude that $\partial^a$ induces an isomorphism $L_b/L_a \cong L_{b-a}(-a)$.
\end{proof}

\begin{rmk}
When $\bk$ has characteristic $\ell$, the isomorphism in Lemma~\ref{lem:jordan-semis}\eqref{it:la-exp} may be false. A direct calculation shows that
\[
\delta_2(tx) = \frac{tx(tx-1)}{2} = t^2 \delta_2(x) + \frac{t^2 -t}{2} \delta_1(x),
\]
so that $c_2^1(t) = \frac{1}{2}(t^2 -t)$.  Let $g$ be a generator of $\Z_\ell(1)$ as a $\Z_\ell$-module.  By~\eqref{eqn:gal-varphi}, in the basis $\{\varphi_0^g, \varphi_1^g, \varphi_2^g\}$ for $L_3$, an element $\gamma \in \Gal(\overline{\F}|\F)$ acts by
\[
\begin{bmatrix}
1 \\
& \chi_\ell(\gamma)^{-1} & \frac{1}{2}(\chi_\ell(\gamma)^{-1} - \chi_\ell(\gamma)^{-2}) \\
&& \chi_\ell(\gamma)^{-2}
\end{bmatrix}.
\]
Suppose now that $\ell = 2$, and that $\F = \F_p$ for some prime $p \equiv 3 \pmod 4$.  Take $\gamma \in \Gal(\overline{\F}|\F)$ to be the Frobenius map, so that $\chi_\ell(\gamma) = p$.  Then $\gamma$ acts on $L_3$ by
\[
\begin{bmatrix}
1 \\
& p^{-1} & p^{-3} \binom{p}{2} \\
&& p^{-2}
\end{bmatrix}
=
\begin{bmatrix}
1 \\ & 1 & 1 \\ && 1
\end{bmatrix}.
\]
In particular, the action of $\Gal(\overline{\F}|\F)$ on $L_3$ is not semisimple!
\end{rmk}

\subsection{Definition of the local systems}
\label{ss:locsys}

Consider the affine line $\bA^1$ over $\F$.  Recall that there is a canonical exact sequence
\[
\pi_1^{\mathrm{geom}}(\bA^1 \smallsetminus \{0\}, 1) \hookrightarrow \pi_1(\bA^1 \smallsetminus \{0\}, 1) \twoheadrightarrow \Gal(\overline{\F}|\F).
\]
This sequence is split by the choice of the unit $\F$-point in $\bA^1 \smallsetminus \{0\}$, and we identify $\pi_1(\bA^1 \smallsetminus \{0\},1) = \Gal(\overline{\F} |\F) \ltimes \pi_1^{\mathrm{geom}}(\bA^1 \smallsetminus \{0\},1)$.  We also have a canonical quotient map $\pi_1^{\mathrm{geom}}(\bA^1 \smallsetminus \{0\}, 1) \twoheadrightarrow \Z_\ell(1)$, which is compatible with the action of $\Gal(\overline{\F}|\F)$ on both groups.  Combining these observations, we obtain a canonical surjective map
\[
\pi_1(\bA^1 \smallsetminus \{0\},1) \twoheadrightarrow \Gal(\overline{\F} |\F) \ltimes \Z_\ell(1).
\]
Thus, any continuous $\bk$-representation of $\Gal(\overline{\F} |\F) \ltimes \Z_\ell(1)$ gives rise to an \'etale $\bk$-local system on $\bA^1$.  For $a \geq 0$, we let $\scL_a$ be the local system on $\bA^1 \smallsetminus \{0\}$ corresponding to the representation $L_a$ as in Lemma~\ref{lem:jordan-basic}.

The pullback of $\scL_a$ to $\bA^1_{\overline{\F}} \smallsetminus \{0\}$ is the local system corresponding to the continuous representation of $\pi_1^{\mathrm{geom}}(\bA^1 \smallsetminus \{0\}, 1)$ given by the restriction of $L_a$ to this subgroup. By construction this action factors through an action of the commutative group $\Z_\ell(1)$. It therefore admits a canonical action of $\Z_\ell(1)$ by automorphisms of local systems.

By Lemma~\ref{lem:jordan-basic}, for $a \geq 1$ there exists a canonical exact sequence
\begin{equation}
\label{eqn:surjection-trivial}
\scL_{a-1} \hookrightarrow \scL_a \twoheadrightarrow \scL_1(-a+1)
\end{equation}
where $\scL_1$ is the trivial local system. Iterating these embeddings we obtain, for any $a \leq b$, a canonical embedding
\begin{equation}
 \label{eqn:jordan-inc1}
 \scL_a \hookrightarrow \scL_b.
\end{equation}
For any $a,b \geq 1$, there is a canonical morphism
\[
\scL_a \otimes \scL_b \to \scL_{a+b-1}
\]
whose image contains $\scL_{\max(a,b)}$, and such that the following diagram commutes, where the vertical maps are the surjections appearing in~\eqref{eqn:surjection-trivial}:
\[
\begin{tikzcd}
\scL_{a} \otimes \scL_{b} \ar[r] \ar[d]
& \scL_{a + b- 1} \ar[d]
\\
\scL_1(-a - b + 2) \ar[r, "\mathrm{id}"] & \scL_1(-a - b + 2).
\end{tikzcd}
\]
Moreover, these morphisms are compatible with the embeddings~\eqref{eqn:jordan-inc1} 
in the obvious way. 
Iterating this construction one obtains, for any $k \geq 1$ and any integers $a_1, \ldots, a_k$, a morphism of local systems on $\bA^1 \smallsetminus \{0\}$
\begin{equation}
\label{eqn:jordan-mult}
\scL_{a_1} \otimes \cdots \otimes \scL_{a_k} \to \scL_{a_1+\cdots+a_k - k+1}.
\end{equation}

\section{Higher nearby cycles}
\label{sec:higher-nc}

\subsection{Setting}
\label{ss:setting}

Let $X$, $f$, $\alpha$ be as in~\S\ref{ss:schemes}.
The goal of this section is to explain the construction, for $\scF$ a perverse sheaf on $X_\eta$, of a perverse sheaf
\[
\Upsilon_f^\alpha(\scF) \in \Perv(X^\alpha_\eta, \bk),
\]
together with a collection parametrized by $\alpha^{-1}(*) \cap P$ of pairwise commuting actions of $\Z_\ell(1)$ on its pullback to $\overline{\F}$. 
We warn the reader that this construction is ``partial'': it will be defined only for certain perverse sheaves $\scF$. We do not have any general criterion which guarantees that this construction works, but we do have tools (see Lemmas~\ref{lem:compat-smooth}, \ref{lem:compat-proper} and \ref{lem:product-upexist}) that can be used to show that this is the case in certain settings, where it gives rise to very important objects (see Section~\ref{sec:app}).

In the case where $|P|=1$ and $Q=\varnothing$, our construction will reduce to Be{\u\i}linson's description of the unipotent nearby cycles functor, and when $|P|=2$ and $Q=\varnothing$ it corresponds to the theory of ``nearby cycles along a 2-dimensional base'' developed in~\cite{gaitsgory-app} and studied in~\cite[\S9.4]{arbook}; see Example~\ref{ex:Q-empty}.

\subsection{Definition}

Let $\ba: P \to \Z_{\ge 1}$ be a function.  Define a local system $\scL_\ba$ on $\bA^P_\eta$ by
\[
\scL_\ba = \bigboxtimes_{p \in P} \scL_{\ba(p)}.
\]
For any $p \in P$, we have an action of $\Z_\ell(1)$ on the pullback of $\scL_{\ba}$ to $\overline{\F}$ induced by the action on the factor labelled by $p$.
If $\ba,\bb: P \to \Z_{\ge 1}$ are two functions, we say that $\ba \le \bb$ if $\ba(p) \le \bb(p)$ for all $p \in P$.  If $\ba \le \bb$, then~\eqref{eqn:jordan-inc1} gives rise to an embedding of local systems
\begin{equation}
\label{eqn:jordan-inc2}
\scL_\ba \hookrightarrow \scL_\bb
\end{equation}
on $\bA^P_\eta$.  In the special case where $\ba(p) = \bb(p)$ for all but one element $p_0$ of $P$, and moreover $\bb(p_0)=\ba(p_0)+1$, by~\eqref{eqn:surjection-trivial} the cokernel is a Tate twist of a local system of the same form: specifically, we have a short exact sequence
\begin{equation}
\label{eqn:locsys-ses}
\scL_\ba \hookrightarrow \scL_\bb \twoheadrightarrow \scL_\bc(\ba(p_0))
\quad\text{where}\quad
\begin{cases}
\bc(p) = \ba(p) = \bb(p) & \text{for all $p \ne p_0$,} \\
\bc(p_0)=1.
\end{cases}
\end{equation}

Let us say that $\ba: P \to \Z_{\ge 1}$ is \emph{$\alpha$-special} (with respect to a given pointed map $\alpha: P_* \to Q_*$) if for each $p \in \alpha^{-1}(Q)$ we have $\ba(p) = 1$.

\begin{defn}
\label{def:main}
Let $f: X \to \bA^P$ be a morphism of schemes. Let $\scF \in \Perv(X_\eta,\bk)$, and let $\alpha: P_* \to Q_*$ be a pointed map.  If $\ba, \bb: P \to \Z_{\ge 1}$ satisfy $\ba \le \bb$, then for any $i \in \Z$ there is a natural map
\begin{equation}
\label{eqn:unc-jordan}
\pH^i(\bi_{\alpha}^* \bj_*(\scF \otimes f_\eta^*\scL_\ba)) \to \pH^i(\bi_{\alpha}^* \bj_*(\scF \otimes f_\eta^*\scL_\bb))
\end{equation}
induced by~\eqref{eqn:jordan-inc2}.
We say that the \emph{$\alpha$-nearby cycles of $\scF$ are well defined} if 
\begin{itemize}
\item
for $i = |Q| - |P|$, there exists $N \in \Z_{\geq 0}$ such that if $\ba$ is $\alpha$-special and satisfies $\ba(p) \geq N$ for any $p \in \alpha^{-1}(*) \cap P$, then for any $\bb \geq \ba$ $\alpha$-special the map~\eqref{eqn:unc-jordan} is an isomorphism;
\item
for $i \ne |Q| - |P|$, for any $\ba$ $\alpha$-special there exists $\bb \geq \ba$ $\alpha$-special such that the map~\eqref{eqn:unc-jordan} vanishes.
\end{itemize}
If these conditions are satisfied, we set
\[
\Upsilon^\alpha_f(\scF) = \varinjlim_{\substack{\ba: P \to \Z_{\ge 1} \\ \text{$\alpha$-special}}} \pH^{|Q| - |P|}(\bi_{\alpha}^* \bj_*(\scF \otimes f_\eta^*\scL_\ba)).
\]
In this case,
for any $p \in \alpha^{-1}(*) \cap P$, after pullback to $\overline{\F}$ we have an action of $\Z_\ell(1)$ on $\Upsilon^\alpha_f(\scF)$ induced by the corresponding action on $\scL_\ba$, called the monodomy action associated with $p$. These actions pairwise commute.
\end{defn}

\begin{rmk}
\phantomsection
\label{rmk:def-Upsilon}
\begin{enumerate}
 \item 
 It should be clear that, although we omit the adjective ``unipotent'' from our terminology for simplicity, what we consider in Definition~\ref{def:main} is an extension of the construction of the \emph{unipotent part} of the nearby cycles functor.
 \item
 \label{it:ind-schemes}
 In the definition above one can allow more generally $X$ to be an ind-scheme over $\F$ which is of ind-finite type. By definition a complex on such an ind-scheme is supported on a closed subscheme, and the functor of Definition~\ref{def:main} can be calculated on a complex by restriction to a closed subscheme supporting the given complex. All the statements of this section hold in this generality, replacing the condition ``proper'' by ``ind-proper,'' and the property ``smooth'' by ``representable by a smooth morphism.''
 \end{enumerate}
\end{rmk}

We will sometimes write $\Upsilon^\alpha_X(\scF)$ instead of $\Upsilon^\alpha_f(\scF)$. This construction is obviously functorial in the sense that if $\scF,\scG \in \Perv(X_\eta,\bk)$ are such that the $\alpha$-nearby cycles of $\scF$ and $\scG$ are well defined and if $u : \scF \to \scG$ is a morphism, then we have a natural morphism $\Upsilon^\alpha_f(u) : \Upsilon^\alpha_f(\scF) \to \Upsilon^\alpha_f(\scG)$ which intertwines the monodromy actions (after pullback to $\overline{\F}$).

\begin{lem}
The partially defined functor $\Upsilon^\alpha_f$ is exact
in the sense that if
\[
\scF_1 \hookrightarrow \scF_2 \twoheadrightarrow \scF_3
\]
is a short exact sequence in $\Perv(X_\eta,\bk)$ such that the $\alpha$-nearby cycles of $\scF_1$, $\scF_2$ and $\scF_3$ are well defined, then the induced morphisms
\[
\Upsilon^\alpha_f(\scF_1) \to \Upsilon^\alpha_f(\scF_2) \to \Upsilon^\alpha_f(\scF_3)
\]
form a short exact sequence in $\Perv(X^\alpha_\eta,\bk)$.
\end{lem}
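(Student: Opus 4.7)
The plan is a diagram chase between the long exact sequences of perverse cohomology at two comparable levels of the indexing colimit system. For any $\alpha$-special $\ba : P \to \Z_{\ge 1}$, the pullback $f_\eta^* \scL_\ba$ is a local system on $X_\eta$, so the functor $(-) \otimes f_\eta^* \scL_\ba$ is exact on $\Perv(X_\eta, \bk)$; applied to the given short exact sequence it yields a distinguished triangle in $\Dbc(X_\eta, \bk)$, and applying the triangulated functor $\bi_\alpha^* \bj_*$ followed by perverse cohomology produces a natural (in $\ba$) five-term exact sequence around the degree $i := |Q|-|P|$:
\[
\pH^{i-1}\bigl(\bi_\alpha^* \bj_*(\scF_3 \otimes f_\eta^* \scL_\ba)\bigr) \to \pH^{i}\bigl(\bi_\alpha^* \bj_*(\scF_1 \otimes f_\eta^* \scL_\ba)\bigr) \to \cdots \to \pH^{i+1}\bigl(\bi_\alpha^* \bj_*(\scF_1 \otimes f_\eta^* \scL_\ba)\bigr).
\]

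Using the hypothesis that $\alpha$-nearby cycles are well defined for all three sheaves, I would choose $\alpha$-special $\ba \le \bb$ as follows. First pick $\ba$ with $\ba(p)$ exceeding the maximum of the three stabilization thresholds $N_1, N_2, N_3$ at every $p \in \alpha^{-1}(*) \cap P$; then for any $\alpha$-special $\bb \ge \ba$, the transition maps on $\pH^{i}$ from $\ba$ to $\bb$ are isomorphisms identifying source and target with $\Upsilon^\alpha_f(\scF_j)$ for each $j \in \{1,2,3\}$. Using the vanishing clause of well-definedness applied to $\scF_3$ in perverse degree $i-1$ and to $\scF_1$ in perverse degree $i+1$, I then take $\bb \ge \ba$ (the pointwise maximum of two successive choices) such that both outer-degree transition maps from $\ba$ to $\bb$ vanish.

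Finally, a direct chase on the commutative ladder of five-term exact sequences at levels $\ba$ and $\bb$ finishes the argument. Write $A_\bullet \to B_\bullet \to C_\bullet \to D_\bullet \to E_\bullet$ for the terms. From the left square, commutativity together with the vanishing of $A_\ba \to A_\bb$ and the isomorphism $B_\ba \simto B_\bb$ forces the boundary $A_\ba \to B_\ba$ to be zero, so $B_\ba \to C_\ba$ is injective by exactness; transporting along the middle isomorphisms yields the desired injectivity $\Upsilon^\alpha_f(\scF_1) \hookrightarrow \Upsilon^\alpha_f(\scF_2)$. Dually, commutativity of the right square, vanishing of $E_\ba \to E_\bb$, and the isomorphism $D_\ba \simto D_\bb$ force $D_\bb \to E_\bb$ to vanish, giving surjectivity $\Upsilon^\alpha_f(\scF_2) \twoheadrightarrow \Upsilon^\alpha_f(\scF_3)$; exactness in the middle is automatic from the bottom row of the ladder. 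The only delicate point is arranging the compatible pair $\ba \le \bb$ that balances the stabilization conditions for all three sheaves against the vanishing conditions for $\scF_1$ and $\scF_3$; after that the rest is formal.
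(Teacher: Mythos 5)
Your argument is correct and is essentially the same as the paper's: both proceed by forming the five-term exact sequence of perverse cohomology around degree $|Q|-|P|$, choosing an $\ba$ above all three stabilization thresholds and then a larger $\bb$ killing the outer transition maps, and then chasing commutative squares to force the boundary morphisms to vanish. The only cosmetic difference is that you work with a two-level ladder $\ba \le \bb$ and stitch the injectivity (established at level $\ba$) and surjectivity (established at level $\bb$) together via the middle isomorphisms, whereas the paper uses a chain $\ba \le \bb \le \bc$ so that both boundary maps vanish at the single level $\bb$ — both routes close identically.
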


\begin{proof}
For any $\ba : P \to \Z_{\geq 1}$ we have an exact sequence
\begin{multline*}
\pH^{|Q|-|P|-1}(\bi_{\alpha}^* \bj_*(\scF_3 \otimes f_\eta^*\scL_\ba)) \to \pH^{|Q|-|P|}(\bi_{\alpha}^* \bj_*(\scF_1 \otimes f_\eta^*\scL_\ba)) \\
\to \pH^{|Q|-|P|}(\bi_{\alpha}^* \bj_*(\scF_2 \otimes f_\eta^*\scL_\ba))
\to \pH^{|Q|-|P|}(\bi_{\alpha}^* \bj_*(\scF_3 \otimes f_\eta^*\scL_\ba)) \\
\to \pH^{|Q|-|P|+1}(\bi_{\alpha}^* \bj_*(\scF_1 \otimes f_\eta^*\scL_\ba)).
\end{multline*}
Fix $N$ such that the map~\eqref{eqn:unc-jordan} is an isomorphism for $i=|Q|-|P|$, for any $\ba$ which is $\alpha$-special with $\ba(p) \geq N$ for $p \in \alpha^{-1}(*) \cap P$ and any $\bb$ which is $\alpha$-special and satisfies $\bb \geq \ba$, for the three complexes $\scF_1$, $\scF_2$ and $\scF_3$. Fix $\ba$ which is $\alpha$-special and satisfies $\ba(p) \geq N$ for $p \in \alpha^{-1}(*) \cap P$. Fix $\bb \geq \ba$ which is $\alpha$-special and such that the morphism
\[
\pH^{|Q|-|P|+1}(\bi_{\alpha}^* \bj_*(\scF_1 \otimes f_\eta^*\scL_\ba)) \to \pH^{|Q|-|P|+1}(\bi_{\alpha}^* \bj_*(\scF_1 \otimes f_\eta^*\scL_\bb))
\]
vanishes, and then fix $\bc \geq \bb$ which is $\alpha$-special and such that the morphism
\[
\pH^{|Q|-|P|-1}(\bi_{\alpha}^* \bj_*(\scF_3 \otimes f_\eta^*\scL_\bb)) \to \pH^{|Q|-|P|-1}(\bi_{\alpha}^* \bj_*(\scF_3 \otimes f_\eta^*\scL_\bc))
\]
vanishes. Considering the commutative diagram
\[
\begin{tikzcd}
\pH^{|Q|-|P|-1}(\bi_{\alpha}^* \bj_*(\scF_3 \otimes f_\eta^*\scL_\bb)) \ar[r] \ar[d] & \pH^{|Q|-|P|}(\bi_{\alpha}^* \bj_*(\scF_1 \otimes f_\eta^*\scL_\bb)) \ar[d] \\
\pH^{|Q|-|P|-1}(\bi_{\alpha}^* \bj_*(\scF_3 \otimes f_\eta^*\scL_\bc)) \ar[r] & \pH^{|Q|-|P|}(\bi_{\alpha}^* \bj_*(\scF_1 \otimes f_\eta^*\scL_\bc))
\end{tikzcd}
\]
in which all maps are the natural ones, using the fact that the right vertical map is an isomorphism and the left one vanishes, we obtain that the upper horizontal map vanishes. Similarly, considering the commutative diagram
\[
\begin{tikzcd}
\pH^{|Q|-|P|}(\bi_{\alpha}^* \bj_*(\scF_3 \otimes f_\eta^*\scL_\ba)) \ar[r] \ar[d] &
\pH^{|Q|-|P|+1}(\bi_{\alpha}^* \bj_*(\scF_1 \otimes f_\eta^*\scL_\ba)) \ar[d] \\
\pH^{|Q|-|P|}(\bi_{\alpha}^* \bj_*(\scF_3 \otimes f_\eta^*\scL_\bb)) \ar[r]
& \pH^{|Q|-|P|+1}(\bi_{\alpha}^* \bj_*(\scF_1 \otimes f_\eta^*\scL_\bb))
\end{tikzcd}
\]
where the left vertical map is an isomorphism and the right one vanishes, we obtain that the lower horizontal map vanishes. This means that in the exact sequence considered at the beginning of the proof (for the map $\bb$) the first and last morphisms vanish. Since the second, third and fourth terms in this sequence identify with $\Upsilon^\alpha_f(\scF_1)$, $\Upsilon^\alpha_f(\scF_2)$ and $\Upsilon^\alpha_f(\scF_3)$ respectively, this finishes the proof.
\end{proof}

\begin{rmk}
Recall the notation of~\S\ref{ss:pointed-maps}, and set
$X_{\eta,\alpha} = X \times_{\bA^P} \bA^P_{\eta,\alpha}$.
Then the immersion $\bj_X$ factors as a composition
\[
X_\eta \xrightarrow{\bj_{X,\alpha,1}} X_{\eta,\alpha} \xrightarrow{\bj_{X,\alpha,2}} X,
\]
and $\bi_{X,\alpha}$ factors through a morphism
$\bh_{X,\alpha} : X^\alpha_\eta \to X_{\eta,\alpha}$.
Hence for any function $\ba$ we have an identification
\begin{equation}
\label{eqn:simplification-Upsilon}
\bi_{\alpha}^* \bj_*(\scF \otimes f_\eta^*\scL_\ba) = \bh_{X,\alpha}^* (\bj_{X,\alpha,1})_*(\scF \otimes f_\eta^*\scL_\ba).
\end{equation}
\end{rmk}

\begin{rmk}
\label{rmk:decomposition-alpha}
Consider the decomposition $\alpha=\alpha_2 \circ \alpha_1$ from Remark~\ref{rmk:bar-alpha}. Then a map $\ba : P \to \Z_{\geq 1}$ is $\alpha$-special iff it is $\alpha_1$-special, and we have $\bi'_{X,\alpha}=\bi'_{X,\alpha_1} \circ \bi'_{X^{\alpha_1},\alpha_2}$, so for any $\alpha$-special $\ba$ we have
 \[
  \bi_{X,\alpha}^* (\bj_X)_*(\scF \otimes f_\eta^*\scL_\ba) \cong 
  (\bi_{X^{\alpha_1},\alpha_2})^* (\bi'_{X,\alpha_1})^* (\bj_X)_*(\scF \otimes f_\eta^*\scL_\ba).
 \]
 Since $(\alpha_2)^{-1}(*)=\{*\}$ we have a morphism $\bi''_{X^{\alpha_1},\alpha_2}$, which is easily seen to be smooth and surjective, and an identification
 \[
  \bi_{X,\alpha}^* (\bj_X)_*(\scF \otimes f_\eta^*\scL_\ba) \cong
  (\bi''_{X^{\alpha_1},\alpha_2})^* (\bi_{X,\alpha_1})^* (\bj_X)_*(\scF \otimes f_\eta^*\scL_\ba).
 \]
Since $\bi''_{X^{\alpha_1},\alpha_2}$ is smooth of relative dimension $|Q|-|R|$ the functor $(\bi''_{X^{\alpha_1},\alpha_2})^*[|Q|-|R|]$ is exact with respect to the perverse t-structure (see~\cite[\S 4.2.4]{bbd} or~\cite[Proposition~3.6.1]{achar-book}), so for any $i \in \Z$ we have
\begin{multline}
\label{eqn:isom-composition-alpha}
 \pH^i(\bi_{X,\alpha}^* (\bj_X)_*(\scF \otimes f_\eta^*\scL_\ba)) \cong \\
 (\bi''_{X^{\alpha_1},\alpha_2})^* [|Q|-|R|] \left( \pH^{i-|Q|+|R|}(\bi_{X,\alpha_1}^* (\bj_X)_*(\scF \otimes f_\eta^*\scL_\ba)) \right).
\end{multline}
Since $\bi''_{X^{\alpha_1},\alpha_2}$ is also surjective, $(\bi''_{X^{\alpha_1},\alpha_2})^*[|Q|-|R|]$ is also faithful on perverse sheaves (see~\cite[Theorem~3.6.6]{achar-book}),\footnote{In fact $\bi''_{X^{\alpha_1},\alpha_2}$ has geometrically connected fibers, so that $(\bi''_{X^{\alpha_1},\alpha_2})^*[|Q|-|R|]$ is even fully faithful on perverse sheaves, see~\cite[Proposition~4.2.5]{bbd} or~\cite[Theorem~3.6.6]{achar-book}.}
hence it detects isomorphisms.
We deduce that the $\alpha$-nearby cycles of $\scF$ are well defined iff so are the $\alpha_1$-nearby cycles of $\scF$, and that in this case we have
\[
 \Upsilon^\alpha_f(\scF) = (\bi''_{X^{\alpha_1},\alpha_2})^*\Upsilon^{\alpha_1}_f(\scF)[|Q|-|R|].
\]
\end{rmk}

\subsection{First properties}
\label{ss:first-properties}

The following lemma makes more precise the perverse degrees one has to consider in Definition~\ref{def:main}.

\begin{lem}
\label{lem:degrees}
 Let $\scF \in \Perv(X_\eta,\bk)$, and let $\alpha: P_* \to Q_*$ be a pointed map. For any map $\ba : P \to \Z_{\geq 1}$, we have
$\pH^i(\bi_{\alpha}^* \bj_*(\scF \otimes f_\eta^*\scL_\ba))=0$
unless $|Q|-|P| \leq i \leq |Q|-|\mathrm{im}(\alpha) \cap Q|$.
\end{lem}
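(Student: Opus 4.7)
The plan is to combine the factorization $\bi_\alpha^* \bj_* = \bh_\alpha^* (\bj_{X,\alpha,1})_*$ from~\eqref{eqn:simplification-Upsilon} with Remark~\ref{rmk:decomposition-alpha}. Using~\eqref{eqn:isom-composition-alpha} and the perverse t-exactness (and faithfulness on perverse sheaves) of $(\bi''_{X^{\alpha_1},\alpha_2})^*[|Q|-|R|]$, it suffices to prove that $\pH^j(\bi_{\alpha_1}^* \bj_*(\scF \otimes f_\eta^* \scL_\ba)) = 0$ for $j \notin [|R|-|P|, 0]$ in the surjective case $\alpha = \alpha_1$; the overall shift by $|Q|-|R|$ then yields the desired range. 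So I may assume $\alpha$ is surjective and $R = Q$.

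For the upper bound, $\bj_{X,\alpha,1} \colon X_\eta \hookrightarrow X_{\eta,\alpha}$ is an \emph{affine} open immersion, since its complement is the vanishing locus of the single regular function $\prod_{p \in \alpha^{-1}(*) \cap P} f_p$. By Artin vanishing~\cite[\S 4.1]{bbd} combined with the left t-exactness of pushforward along open immersions, $(\bj_{X,\alpha,1})_*$ is t-exact for the perverse t-structure, so $(\bj_{X,\alpha,1})_*(\scF \otimes f_\eta^* \scL_\ba) \in \Perv(X_{\eta,\alpha}, \bk)$. In the surjective case $\bh_\alpha$ is a closed immersion (because $X^\alpha_\eta = X^\alpha \cap X_{\eta,\alpha}$ and $X^\alpha \subset X$ is closed), so $\bh_\alpha^*$ is right t-exact, giving $\pH^j = 0$ for $j > 0$.

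For the lower bound, the key point is that the closed immersion $\bA^Q_\eta \hookrightarrow \bA^P_{\eta,\alpha}$ is cut out by exactly $|P|-|Q|$ linear equations: the $|\alpha^{-1}(*) \cap P|$ equations $x_p = 0$, together with the $|\alpha^{-1}(Q)|-|Q|$ identifications $x_p - x_{p'} = 0$ of coordinates sharing an image in $Q$. Pulling back by $f$, the immersion $\bh_\alpha$ is cut out by $|P|-|Q|$ regular functions on $X_{\eta,\alpha}$, so it factors as a composition of $|P|-|Q|$ closed immersions of the form $\{h=0\} \hookrightarrow Y$ for various $h \colon Y \to \bA^1$. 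At each such step the complement $\{h \neq 0\}$ is affine open in $Y$ (localization at $h$), so $j_* j^* \mathcal{H}$ is again perverse; the localization triangle $i_* i^! \mathcal{H} \to \mathcal{H} \to j_* j^* \mathcal{H}$ then forces $\pH^j(i^! \mathcal{H}) = 0$ for $j \notin [0,1]$, and Verdier duality (applied to $D\mathcal{H}$) yields $\pH^j(i^* \mathcal{H}) = 0$ for $j \notin [-1,0]$. Iterating $|P|-|Q|$ times produces the bound $\pH^j = 0$ for $j < |Q|-|P|$, completing the proof.

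The main technical point is verifying that the codimension-one lemma remains valid at each intermediate step despite possible degeneracies (the cutting function $h$ may vanish on whole components, so that $\{h=0\}$ need not be a pure codimension-one divisor); however, the proof via Artin vanishing and the localization triangle is insensitive to this, depending only on the affineness of the open complement.
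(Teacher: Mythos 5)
Your proof is correct and follows essentially the same strategy as the paper: reduce to the surjective case via Remark~\ref{rmk:decomposition-alpha}, then use that $\bj_*(\scF \otimes f_\eta^*\scL_\ba)$ (or equivalently $(\bj_{X,\alpha,1})_*(\scF \otimes f_\eta^*\scL_\ba)$) is perverse because the open immersion is affine, and that the relevant closed immersion factors into $|P|-|Q|$ closed immersions each with affine complement, each step shifting the lower perverse bound down by one. The only cosmetic difference is that you re-derive the degree-shifting lemma for such closed immersions via Artin vanishing, the localization triangle, and Verdier duality (and strictly speaking your iteration needs the statement for input in arbitrary degrees $[a,b]$, not just perverse input, though the identical argument gives it), whereas the paper simply cites it as~\cite[Corollaire~4.1.10]{bbd}; the paper also runs the factorization through $\bi'_\alpha: X^\alpha \to X$ rather than $\bh_{X,\alpha}: X^\alpha_\eta \to X_{\eta,\alpha}$, but the two are interchangeable here.
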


\begin{proof}
Remark~\ref{rmk:decomposition-alpha} reduces the proof to the case $\alpha$ is surjective, which we assume from now on.
 Set $r:=|P|-|Q|$. Then $\bi'_\alpha$ is a closed immersion; more specifically, it can be written as a composition $i_r \circ \cdots \circ i_1$ where each $i_j$ is a closed immersion whose complementary open immersion is affine. (In fact it suffices to remark this for $\bar\alpha$, where the decomposition is obtained by writing this map as a composition of embeddings of codimension-$1$ linear subspaces.) 
Then the claim follows from the fact that if $i: X \to Y$ is a closed immersion with affine complement, 
the functor $i^*$ sends any complex concentrated in perverse degrees between $a$ and $b$ to a complex concentrated in perverse degrees between $a-1$ and $b$, see~\cite[Corollaire~4.1.10]{bbd}.
\end{proof}

We deduce the following property.

\begin{lem}
\label{lem:ph-injective}
Let $\scF \in \Perv(X_\eta,\bk)$, and let $\alpha: P_* \to Q_*$ be a pointed map. For any two functions $\ba,\bb:P \to \Z_{\ge 1}$ with $\ba \le \bb$, the natural map
\[
\pH^{|Q|-|P|}(\bi_{\alpha}^* \bj_*(\scF \otimes f_\eta^*\scL_\ba)) \to
\pH^{|Q|-|P|}(\bi_{\alpha}^* \bj_*(\scF \otimes f_\eta^*\scL_\bb))
\]
is injective.
\end{lem}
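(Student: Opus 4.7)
The plan is to reduce to an elementary one-step case and then invoke the vanishing range from Lemma~\ref{lem:degrees}.

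First, I would reduce to the case where $\bb$ differs from $\ba$ at exactly one element $p_0 \in P$, with $\bb(p_0) = \ba(p_0) + 1$. Indeed, given an arbitrary pair $\ba \leq \bb$, one can interpolate by a finite chain $\ba = \ba_0 \le \ba_1 \le \cdots \le \ba_n = \bb$ where each consecutive pair differs in a single coordinate by $1$. Since the embeddings $\scL_{\ba_i} \hookrightarrow \scL_{\ba_{i+1}}$ compose (in the obvious way) to give $\scL_\ba \hookrightarrow \scL_\bb$, the map in the statement factors as a composition of the analogous maps for each step. Hence it suffices to show injectivity in the one-step case.

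In that setting, apply~\eqref{eqn:locsys-ses} to obtain a short exact sequence of local systems on $\bA^P_\eta$
\[
\scL_\ba \hookrightarrow \scL_\bb \twoheadrightarrow \scL_\bc(\ba(p_0)),
\]
where $\bc(p) = \ba(p)$ for $p \neq p_0$ and $\bc(p_0) = 1$. Pulling back along $f_\eta$ and tensoring with $\scF$ over the field $\bk$ preserves the exactness (tensoring with a local system is exact) and produces a short exact sequence in $\Perv(X_\eta,\bk)$. Pushing forward along $\bj$ and pulling back along $\bi_\alpha$, we obtain a distinguished triangle in $\Dbc(X^\alpha_\eta,\bk)$ whose long exact sequence of perverse cohomology sheaves contains
\[
\pH^{|Q|-|P|-1}\bigl(\bi_{\alpha}^*\bj_*(\scF \otimes f_\eta^*\scL_\bc(\ba(p_0)))\bigr) \to \pH^{|Q|-|P|}\bigl(\bi_{\alpha}^*\bj_*(\scF \otimes f_\eta^*\scL_\ba)\bigr) \to \pH^{|Q|-|P|}\bigl(\bi_{\alpha}^*\bj_*(\scF \otimes f_\eta^*\scL_\bb)\bigr),
\]
and the map we want to show is injective is exactly the second arrow, induced by~\eqref{eqn:jordan-inc2}.

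It therefore suffices to prove that the first term vanishes. But $\scL_\bc$ is itself a Jordan-block local system of the form considered in Definition~\ref{def:main} (a Tate twist is irrelevant for the perverse t-structure), so Lemma~\ref{lem:degrees}, applied with $\ba$ replaced by $\bc$, gives
\[
\pH^i\bigl(\bi_{\alpha}^*\bj_*(\scF \otimes f_\eta^*\scL_\bc(\ba(p_0)))\bigr) = 0
\quad\text{for } i < |Q|-|P|,
\]
and in particular for $i = |Q|-|P|-1$. No step here is delicate: the whole argument rests on the short exact sequence~\eqref{eqn:locsys-ses} and the vanishing range from the preceding lemma; the only thing to be careful about is that tensoring with $\scL_\bc(\ba(p_0))$ does not move us outside the class of local systems to which Lemma~\ref{lem:degrees} applies, which is clear from its statement.
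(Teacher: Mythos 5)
Your proof is correct and follows essentially the same route as the paper: reduce by induction to the single-step case, apply the short exact sequence~\eqref{eqn:locsys-ses} to get a distinguished triangle, and use the vanishing range from Lemma~\ref{lem:degrees} on the third term to kill the $\pH^{|Q|-|P|-1}$ term in the long exact sequence. The paper's proof is more terse (``examining the long exact sequence''), but the content is identical.
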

\begin{proof}
By induction, we can reduce to the case where $\ba(p) = \bb(p)$ for all but one element of $P$, say $p_0$, and moreover $\bb(p_0)=\ba(p_0)+1$.  In this case, define $\bc$ as in~\eqref{eqn:locsys-ses}.  That short exact sequence gives rise to a distinguished triangle
\[
\bi_{\alpha}^* \bj_*(\scF \otimes f_\eta^*\scL_\ba) \to
\bi_{\alpha}^* \bj_*(\scF \otimes f_\eta^*\scL_\bb) \to
\bi_{\alpha}^* \bj_*(\scF \otimes f_\eta^*\scL_\bc)(\ba(p_0)) \xrightarrow{[1]}.
\]
Lemma~\ref{lem:degrees} applies to all three terms, and then the present lemma follows by examining the long exact sequence in perverse cohomology.
\end{proof}

\subsection{Reformulation}

In the following lemma we show that Definition~\ref{def:main} can be formulated in a slightly different way.

\begin{lem}
\label{lem:altdefn}
Let $\scF \in \Perv(X_\eta,\bk)$, and let $\alpha: P_* \to Q_*$ be a pointed map. The $\alpha$-nearby cycles of $\scF$ are well defined if and only if the following conditions hold.
\begin{itemize}
\item There exists $N \in \Z_{\geq 0}$ such that if $\ba$ is $\alpha$-special and satisfies $\ba(p) \geq N$ for any $p \in \alpha^{-1}(*) \cap P$, then for any $\bb \geq \ba$ $\alpha$-special the natural map
\[
\pH^{|Q|-|P|}(\bi_{\alpha}^* \bj_*(\scF \otimes f_\eta^*\scL_\ba)) \to \pH^{|Q|-|P|}(\bi_{\alpha}^* \bj_*(\scF \otimes f_\eta^*\scL_\bb))
\]
is an isomorphism.
\item For any $\ba$ $\alpha$-special there exists $\bb \geq \ba$ $\alpha$-special such that the natural map
\[
{}^p \tau^{> |Q| - |P|} (\bi_{\alpha}^* \bj_*(\scF \otimes f_\eta^*\scL_\ba)) \to {}^p \tau^{>|Q|-|P|}(\bi_{\alpha}^* \bj_*(\scF \otimes f_\eta^*\scL_\bb))
\]
vanishes.
\end{itemize}
\end{lem}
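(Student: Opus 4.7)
The plan is to establish both directions of the biconditional, using Lemma~\ref{lem:degrees} to restrict attention to a finite range of perverse degrees. Throughout, abbreviate $\scA(\ba) := \bi_{\alpha}^* \bj_*(\scF \otimes f_\eta^* \scL_\ba)$, $d := |Q|-|P|$, and $D := |Q|-|\mathrm{im}(\alpha) \cap Q|$; by Lemma~\ref{lem:degrees}, $\scA(\ba)$ is concentrated in perverse degrees $[d, D]$.

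The direction from the new conditions to Definition~\ref{def:main} is immediate: the first conditions are identical in both formulations, and applying the functor $\pH^i$ to the assumed vanishing of the ${}^p\tau^{>d}$-morphism yields the vanishing of each $\pH^i$-morphism for $i > d$; for $i < d$ both terms vanish by Lemma~\ref{lem:degrees}, so Definition~\ref{def:main} is satisfied trivially there.

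For the converse, the plan is a descending induction on $k$ in the range $d \leq k \leq D$, proving the stronger claim: \emph{for every $\alpha$-special $\ba$ there exists $\alpha$-special $\bb \geq \ba$ such that the natural map ${}^p\tau^{>k}(\scA(\ba)) \to {}^p\tau^{>k}(\scA(\bb))$ vanishes}. The base $k = D$ is trivial since both sides vanish, and $k = d$ is the desired second condition. For the inductive step from $k+1$ to $k$ (so $k \geq d$ and $k+1 > d$), given $\ba$, first apply Definition~\ref{def:main} in degree $i = k+1$ to produce $\ba' \geq \ba$ killing the morphism on $\pH^{k+1}$, then apply the inductive hypothesis to $\ba'$ to produce $\bb \geq \ba'$ killing the morphism on ${}^p\tau^{>k+1}$. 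In the functorial distinguished triangle $\pH^{k+1}(\scA(\bc))[-k-1] \to {}^p\tau^{>k}(\scA(\bc)) \to {}^p\tau^{>k+1}(\scA(\bc)) \xrightarrow{+1}$, the morphism of triangles induced by $\ba \to \bb$ has right-hand vertical zero (by choice of $\bb$, factoring via $\ba'$), so the middle map $g: {}^p\tau^{>k}(\scA(\ba)) \to {}^p\tau^{>k}(\scA(\bb))$ admits a lift $\tilde g: {}^p\tau^{>k}(\scA(\ba)) \to \pH^{k+1}(\scA(\bb))[-k-1]$.

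The main technical point, which I expect to be the subtlest step, is to show that this lift is uniquely determined by $\pH^{k+1}(g)$. The underlying fact is that $\mathrm{Hom}(U, V)$ vanishes whenever $U$ lies in perverse degrees $\geq a$ and $V$ lies in perverse degrees $\leq b$ with $a > b$; this follows by pure-piece d\'evissage together with the vanishing of negative $\mathrm{Ext}$ in the heart, which is a consequence of the $t$-structure axioms. Applying $\mathrm{Hom}(-, \pH^{k+1}(\scA(\bb))[-k-1])$ to the source truncation triangle and using this vanishing for the pieces involving ${}^p\tau^{>k+1}(\scA(\ba))$ (in degrees $\geq k+2$, paired against the target which is pure in degree $k+1$), one obtains a canonical isomorphism $\Hom({}^p\tau^{>k}(\scA(\ba)), \pH^{k+1}(\scA(\bb))[-k-1]) \cong \Hom_{\Perv}(\pH^{k+1}(\scA(\ba)), \pH^{k+1}(\scA(\bb)))$ given by $\pH^{k+1}$. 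Under this identification $\tilde g$ corresponds to $\pH^{k+1}(g)$, equal to the natural map $\pH^{k+1}(\scA(\ba)) \to \pH^{k+1}(\scA(\bb))$, which vanishes since it factors through $\pH^{k+1}(\scA(\ba'))$ and the first factor is zero by choice of $\ba'$. Hence $\tilde g = 0$, so $g = 0$, completing the inductive step and the proof.
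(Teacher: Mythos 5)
Your "easy" direction and the strategy of descending induction on the truncation level are both sound, and match the spirit of the paper (which isolates the inductive part as an abstract auxiliary statement about t-structures, Lemma~\ref{lem:vancrit}). However, the inductive step has a genuine gap. The claimed fact that $\Hom(U,V)$ vanishes when $U$ lies in perverse degrees $\geq a$ and $V$ in degrees $\leq b$ with $a>b$ is false: the t-structure orthogonality axiom gives $\Hom(D^{\leq n},D^{\geq n+1})=0$ (vanishing of \emph{negative} Ext), whereas what appears here is the opposite orientation. Concretely, if $A$ and $B$ are perverse sheaves and $m\geq 1$, then $\Hom(A[-m],B)=\mathrm{Ext}^m(A,B)$ is a \emph{positive} Ext group and does not vanish in general. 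Applying $\Hom(-,\pH^{k+1}(\scA(\bb))[-k-1])$ to the triangle $\pH^{k+1}\to{}^p\tau^{>k}\to{}^p\tau^{>k+1}$ therefore does \emph{not} yield an isomorphism onto $\Hom_\Perv(\pH^{k+1}(\scA(\ba)),\pH^{k+1}(\scA(\bb)))$; the outer terms are positive Ext groups, not zero. Consequently $\tilde g$ is not determined by $\pH^{k+1}(\tilde g)$, and the conclusion $\tilde g = 0$ does not follow from $\pH^{k+1}(g)=0$.

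The fix is to reverse the order in which you apply your two inputs, so that the last map in the chain annihilates the bottom cohomology slice rather than trying to prove that the lift itself is zero. Given $\ba$, first apply the inductive hypothesis to find $\bb'\geq\ba$ with ${}^p\tau^{>k+1}(\scA(\ba)\to\scA(\bb'))=0$; then apply Definition~\ref{def:main} in degree $k+1$ to find $\bb\geq\bb'$ with $\pH^{k+1}(\scA(\bb')\to\scA(\bb))=0$. Write $g=g_2\circ g_1$ with $g_1\colon{}^p\tau^{>k}(\scA(\ba))\to{}^p\tau^{>k}(\scA(\bb'))$ and $g_2\colon{}^p\tau^{>k}(\scA(\bb'))\to{}^p\tau^{>k}(\scA(\bb))$. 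The first vanishing gives a lift $g_1=\iota_{\bb'}\circ\tilde g_1$ through $\pH^{k+1}(\scA(\bb'))[-k-1]$; then $g_2\circ\iota_{\bb'}=\iota_{\bb}\circ\pH^{k+1}(g_2)[-k-1]=0$ by functoriality of the truncation triangle and the second vanishing, so $g=0$. This is exactly the mechanism in the proof of Lemma~\ref{lem:vancrit}: after factoring the composition through the bottom cohomology, one composes with one additional map $\phi_{b-a+1}$ that is zero on that slice, rather than attempting to argue the factored map is itself zero.
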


\begin{proof}
By Lemma~\ref{lem:degrees}, the map~\eqref{eqn:unc-jordan} automatically vanishes for $i < |Q| - |P|$.  In view of this, it is clear that the conditions in the present lemma imply those in Definition~\ref{def:main}.

Conversely, assume that the conditions in Definition~\ref{def:main} hold.  Let $\ba: P \to \Z_{\ge 1}$ be an $\alpha$-special function, and define a sequence of functions $\ba_1, \ba_2, \ldots$ inductively as follows: set $\ba_1 = \ba$, and if $\ba_1, \ldots, \ba_{n-1}$ are already defined, choose $\ba_n \ge \ba_{n-1}$ such that
\[
 \pH^{i}(\bi_{\alpha}^* \bj_*(\scF \otimes f_\eta^*\scL_{\ba_{n-1}})) \to \pH^{i}(\bi_{\alpha}^* \bj_*(\scF \otimes f_\eta^*\scL_{\ba_{n}}))
\]
vanishes for $i > |Q| - |P|$.  (By Lemma~\ref{lem:degrees} again, there are only finitely many degrees $i > |Q| - |P|$ in which the objects above are nonzero, so finding such a $\ba_n$ requires only finitely many invocations of Definition~\ref{def:main}.)  Set
\[
M_j = {}^p \tau^{>|Q|-|P|} (\bi_{\alpha}^* \bj_*(\scF \otimes f_\eta^*\scL_{\ba_j})),
\qquad j = 1, 2,\ldots.
\]
By Lemma~\ref{lem:vancrit} below, there is an integer $N \geq 1$ such that $M_1 \to M_N$ is the zero map. The second condition of the lemma is then satisfied by $\bb = \ba_N$.
\end{proof}

\begin{lem}
\label{lem:vancrit}
Let $\mathscr{T}$ be a triangulated category equipped with a non-degenerate t-structure, and suppose we have a sequence of objects and maps
\[
M_1 \xrightarrow{\phi_1} M_2 \xrightarrow{\phi_2} M_3 \to \cdots
\]
such that the following conditions hold:
\begin{enumerate}
\item there exist integers $a \le b$ such that for all $j$, the t-cohomology ${}^t \hspace{-1.5pt} \scH^i(M_j)$ vanishes unless $a \le i \le b$;
\item for any $j \geq 1$ and $i \in \Z$, the map ${}^t \hspace{-1.5pt} \scH^i(\phi_j)$
vanishes.
\end{enumerate}
Then the composition
$\phi_{b-a+1} \circ \phi_{b-a} \circ \cdots \circ \phi_1: M_1 \to M_{b-a+2}$ vanishes.
\end{lem}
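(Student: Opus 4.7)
The plan is to induct on the amplitude $n := b - a$, at each step using the canonical truncation $\tau^{<b}$ to reduce the amplitude by one.

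For the base case $n = 0$, every $M_j$ has cohomology concentrated in degree $a$, so by non-degeneracy of the t-structure $M_j \cong {}^t\hspace{-1.5pt}\scH^a(M_j)[-a]$, and under this identification $\phi_j$ becomes ${}^t\hspace{-1.5pt}\scH^a(\phi_j)[-a] = 0$. Hence any single map $\phi_j$ vanishes, which is the required statement for $b-a+1=1$.

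For the inductive step, I would consider for each $j$ the canonical truncation triangle $\tau^{<b} M_j \xrightarrow{\iota_j} M_j \to {}^t\hspace{-1.5pt}\scH^b(M_j)[-b] \xrightarrow{[1]}$, the third term taking this form because ${}^t\hspace{-1.5pt}\scH^i(M_j)$ vanishes for $i > b$. The standard computation
$$\Hom(M_j, {}^t\hspace{-1.5pt}\scH^b(M_{j+1})[-b]) \cong \Hom({}^t\hspace{-1.5pt}\scH^b(M_j), {}^t\hspace{-1.5pt}\scH^b(M_{j+1}))$$
identifies the composition $M_j \xrightarrow{\phi_j} M_{j+1} \to {}^t\hspace{-1.5pt}\scH^b(M_{j+1})[-b]$ with ${}^t\hspace{-1.5pt}\scH^b(\phi_j) = 0$, so it vanishes. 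The long exact Hom sequence therefore yields a lift $\psi_1 : M_1 \to \tau^{<b} M_2$ with $\iota_2 \circ \psi_1 = \phi_1$; analogously, since $\tau^{<b} M_j$ has no cohomology in degree $b$ whatsoever, the composition $\tau^{<b} M_j \xrightarrow{\phi_j \circ \iota_j} M_{j+1} \to {}^t\hspace{-1.5pt}\scH^b(M_{j+1})[-b]$ vanishes, and for each $j \ge 2$ I obtain a lift $\chi_j : \tau^{<b} M_j \to \tau^{<b} M_{j+1}$ satisfying $\iota_{j+1} \circ \chi_j = \phi_j \circ \iota_j$. Telescoping these equalities gives
$$\phi_{b-a+1} \circ \cdots \circ \phi_1 \;=\; \iota_{b-a+2} \circ \chi_{b-a+1} \circ \cdots \circ \chi_2 \circ \psi_1.$$

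Each $\tau^{<b} M_j$ has cohomology concentrated in $[a, b-1]$, and for $i < b$ the identity $\iota_{j+1} \circ \chi_j = \phi_j \circ \iota_j$ together with the fact that ${}^t\hspace{-1.5pt}\scH^i(\iota_j)$ is then an isomorphism forces ${}^t\hspace{-1.5pt}\scH^i(\chi_j) = 0$. Hence the sequence $(\chi_j)_{j \ge 2}$ satisfies the hypotheses of the lemma with amplitude reduced to $b - a - 1$; by induction the composition of $b-a$ consecutive $\chi_j$'s vanishes, and the displayed identity gives the desired result. The only delicate point is the existence of the lifts $\chi_j$; happily, the telescoping identity $\phi_j \circ \iota_j = \iota_{j+1} \circ \chi_j$ only requires choosing each $\chi_j$ individually, so no compatible system of lifts needs to be constructed.
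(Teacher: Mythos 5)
Your proof is correct and uses essentially the same induction-on-amplitude strategy as the paper, differing only in which truncation is used: you cut away the top degree with $\tau^{<b}$ and telescope the lifted maps $\chi_j$, whereas the paper cuts away the bottom degree with $\tau^{\ge a+1}$ and shows the resulting composite factors through ${}^t\hspace{-1.5pt}\scH^a(M_{b-a+1})[-a]$ before killing it with one more $\phi$. The two arguments are exact mirror images and prove the same vanishing; note that you could simplify slightly by taking $\chi_j = \tau^{<b}\phi_j$ directly (the naturality square then gives $\iota_{j+1}\circ\chi_j = \phi_j\circ\iota_j$ for free, bypassing the Hom-sequence lifting argument for $j\ge 2$).
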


 \begin{proof}
 We proceed by induction on $b - a$.  If $b-a = 0$, then $M_j = {}^t \hspace{-1.5pt} \scH^a(M_j)[-a]$ for all $j$, and the maps $\phi_j$ are already all zero by assumption, so the claim is clear.
 
 Otherwise, for any $j$ we set $M'_j = {}^t \tau^{\ge a+1}M_j$, and let $\phi'_j: M'_j \to M'_{j+1}$ be the induced map.  By induction, the map $\phi'_{b-a} \circ \cdots \circ \phi'_1: M'_1 \to M'_{b-a+1}$ vanishes.  Let $\psi = \phi_{b-a} \circ \cdots \circ \phi_1: M_1 \to M_{b-a+1}$, and consider the commutative diagram
 \[
 \begin{tikzcd}
 {}^t \hspace{-1.5pt} \scH^a(M_1)[-a] \ar[d, "0"'] \ar[r] & M_1 \ar[d, "\psi"'] \ar[r] & M'_1 \ar[d, "0"] \ar[r, "+1"] & {} \\
 {}^t \hspace{-1.5pt} \scH^a(M_{b-a+1})[-a] \ar[d, "0"'] \ar[r] & M_{b-a+1} \ar[d, "\phi_{b-a+1}"] \ar[r] & M'_{b-a+1} \ar[d, "\phi'_{b-a+1}"] \ar[r, "+1"] & {} \\
 {}^t \hspace{-1.5pt} \scH^a(M_{b-a+2})[-a] \ar[r] & M_{b-a+2} \ar[r] & M'_{b-a+2} \ar[r, "+1"] & {}
 \end{tikzcd}
 \]
 where all rows are distinguished triangles. An examination of the long exact sequence of $\Hom$-groups shows that $\psi$ must be induced by a map
 \[
 \theta:  M_1 \to {}^t \hspace{-1.5pt} \scH^a(M_{b-a+1})[-a].
 \]
 The composition $\phi_{b-a+1} \circ \psi$ is equal to the composition
 \[
 M_1 
 \xrightarrow{\theta} {}^t \hspace{-1.5pt} \scH^a(M_{b-a+1})[-a] \to M_{b-a+1} \xrightarrow{\phi_{b-a+1}} M_{b-a+2}.
 \]
 But the composition of the last two arrows is $0$, so $\phi_{b-a+1} \circ \psi = 0$, which concludes the proof.
 \end{proof}

\subsection{Examples}

To illustrate Definition~\ref{def:main} we next consider some special cases.

\begin{ex}
 Assume that $\alpha^{-1}(*)=\{*\}$. Then there exists only one $\alpha$-special map, namely the constant map with value $1$, and the corresponding local system is constant (of rank $1$). In this case, we interpret the conditions in Definition~\ref{def:main} as requiring that $\pH^i(\bi_{\alpha}^* \bj_*(\scF))=0$ if $i \neq |Q|-|P|$. Note that we have $\bi_\alpha = \bj \circ \bi_\alpha''$, hence
$\bi_{\alpha}^* \bj_*(\scF) = (\bi_\alpha'')^* \scF$.
It follows that the $\alpha$-nearby cycles of $\scF$ are well defined if and only if $(\bi_\alpha'')^*\scF[|Q|-|P|]$ is perverse, and that if this is the case we have
\[
\Upsilon^\alpha_f(\scF) \cong (\bi_\alpha'')^*\scF[|Q|-|P|].
\]
\end{ex}

\begin{ex}
\label{ex:Q-empty}
Assume now that $Q=\varnothing$. In this case, there exists a unique map $\alpha : P_* \to Q_*=\{*\}$ (which will therefore be omitted from the notation), and any map $\ba : P \to \Z_{\geq 1}$ is special. If $n=|P|$, we will speak of \emph{$n$-dimensional nearby cycles} instead of $\alpha$-nearby cycles in this case. More specifically:
\begin{enumerate}
\item
In case $n=1$, the constructions above amount to those of Be{\u\i}linson~\cite{beilinson} in his description of the unipotent nearby cycles functor and its monodromy action, see~\cite{morel} (see also~\cite{reich} or~\cite[\S 9.2]{arbook} for the analogous construction in the complex analytic setting); in particular, the $1$-dimensional nearby cycles of $\scF$ are well defined for any $\scF$, and compute the unipotent part of the nearby cycles $\Psi_f(\scF)$. The monodromy action on the pullback to $\overline{\F}$ is the inverse of the standard monodromy action.
\item
In case $n=2$, the considerations above specialize to the setting studied in~\cite[\S 9.4]{arbook} (following an idea of Gaitsgory in~\cite{gaitsgory-app}); in particular,~\cite[Proposition~9.4.7]{arbook} gives a sufficient condition under which the $2$-dimensional nearby cycles of $\scF$ are well defined and can be computed in terms of iterated unipotent nearby cycles.
\end{enumerate}
This case is also considered (for general $n$) in~\cite{salmon}, where appropriate versions of Lemmas~\ref{lem:compat-smooth} and~\ref{lem:compat-proper} below are also obtained.
\end{ex}

\subsection{Compatibilities}

\begin{lem}
\label{lem:classical-nc}
If $\alpha: P_* \to Q_*$ is surjective, $|Q| = |P| - 1$, and $|\alpha^{-1}(*)| = 2$, then the $\alpha$-nearby cycles of $\scF$ are well defined.
\end{lem}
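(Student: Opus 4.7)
The plan is to recognize this as an instance of Be{\u\i}linson's classical construction of $1$-dimensional unipotent nearby cycles, as recalled in Example~\ref{ex:Q-empty}(1).

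\emph{Geometric reduction.} First I would unpack the hypotheses. Since $\alpha$ is surjective with $|Q| = |P|-1$ and $|\alpha^{-1}(*)| = 2$, there is a unique element $p_0 \in P$ with $\alpha(p_0) = *$, and $\alpha$ restricts to a bijection $P \smallsetminus \{p_0\} \simto Q$. This bijection identifies $\bA^Q$ with the hyperplane $\{X_{p_0}=0\} \subset \bA^P$ and gives an isomorphism $\bA^P_{\eta,\alpha} \cong \bA^1 \times \bA^Q_\eta$ in which the $\bA^1$-factor is the $p_0$-coordinate. The scheme $X_{\eta,\alpha}$ therefore inherits a natural morphism $g : X_{\eta,\alpha} \to \bA^1$ such that $\bj_{X,\alpha,1}: X_\eta \hookrightarrow X_{\eta,\alpha}$ is the open immersion of $g^{-1}(\bA^1 \smallsetminus \{0\})$ and $\bh_{X,\alpha}: X^\alpha_\eta \hookrightarrow X_{\eta,\alpha}$ is the closed immersion of $g^{-1}(0)$. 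In other words, the data $(X_{\eta,\alpha}, g, \scF)$ fit into the classical $1$-dimensional Be{\u\i}linson setup.

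\emph{Identification of complexes.} For any $\alpha$-special $\ba$ we have $\ba(p)=1$ for $p \neq p_0$, so since $\scL_1$ is the constant rank-one local system, the external product $\scL_\ba$ on $\bA^P_\eta$ is the pullback of $\scL_{\ba(p_0)}$ along the $p_0$-projection $\bA^P_\eta \to \bA^1 \smallsetminus \{0\}$. Consequently $f_\eta^* \scL_\ba$ coincides with the pullback of $\scL_{\ba(p_0)}$ along $g \circ \bj_{X,\alpha,1}$, and by~\eqref{eqn:simplification-Upsilon} the complex $\bi_\alpha^* \bj_*(\scF \otimes f_\eta^* \scL_\ba)$ identifies with $\bh_{X,\alpha}^*(\bj_{X,\alpha,1})_*\bigl(\scF \otimes (g \circ \bj_{X,\alpha,1})^* \scL_{\ba(p_0)}\bigr)$. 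This is exactly the complex appearing in Be{\u\i}linson's construction of unipotent nearby cycles for $g$ applied to $\scF$, with Jordan parameter $a = \ba(p_0)$.

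\emph{Applying Be{\u\i}linson's theorem.} Lemma~\ref{lem:degrees} forces the perverse cohomology of these complexes to vanish outside degrees $|Q|-|P|=-1$ and $|Q|-|\mathrm{im}(\alpha) \cap Q|=0$. The classical Be{\u\i}linson stabilization theorem (the $n=1$ case of Example~\ref{ex:Q-empty}, applied to $(X_{\eta,\alpha}, g)$) then provides exactly the required behaviour: as $\ba(p_0)$ grows, the maps on $\pH^{-1}$ stabilize to a canonical isomorphism (computing the unipotent part of $\Psi_g(\scF)$), while for any fixed $\ba$ the induced map on $\pH^0$ is killed by enlarging to some $\bb \geq \ba$ with $\bb(p_0) \gg \ba(p_0)$. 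These are precisely the two conditions of Definition~\ref{def:main}, so the $\alpha$-nearby cycles of $\scF$ are well defined. No serious obstacle is expected: the argument is essentially notational, and the only point that needs care is the identification in the second paragraph, which hinges on the fact that tensoring with $\scL_{\ba(p)}=\scL_1$ does nothing for $p \neq p_0$.
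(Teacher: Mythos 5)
Your proof is correct and takes essentially the same route as the paper's, which also reduces to Be{\u\i}linson's one-dimensional construction via the projection onto the $p_0$-coordinate and invokes Example~\ref{ex:Q-empty}(1). You spell out the geometric identification (via~\eqref{eqn:simplification-Upsilon} and the induced map $g : X_{\eta,\alpha} \to \bA^1$) that the paper's terse one-line argument leaves implicit, but this is only a matter of presentation.
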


\begin{proof}
Our assumptions imply that there is exactly one element $p \in P$ with $\alpha(p) = *$. It is clear that the datum of an $\alpha$-special map $\ba : P \to \Z_{\geq 1}$ is equivalent to the datum of a nonnegative integer (corresponding to $\ba(p)$). If $\pi_p: \bA^P \to \bA^1$ is the projection onto the $p$th coordinate, then the construction of the $\alpha$-nearby cycles of $\scF$ with respect to $f$ amounts to the construction of the $1$-dimensional nearby cycles of $\scF$ with respect to $\pi_p \circ f$, which are well defined by Example~\ref{ex:Q-empty}.
\end{proof}

For the next statements we fix a pointed map $\alpha : P_* \to Q_*$.
Given a morphism $g : Y \to X$, we will denote by
$g_\eta: Y_\eta \to X_\eta$ and
$g^\alpha_\eta: Y^\alpha_\eta \to X^\alpha_\eta$
the morphisms obtained by base change.

\begin{lem}
\label{lem:compat-smooth}
Let $g: Y \to X$ be a smooth morphism of relative dimension $d$, and let $\scF \in \Perv(X_\eta,\bk)$.
\begin{enumerate}
\item 
If the $\alpha$-nearby cycles of $\scF$ are well defined, then so are the $\alpha$-nearby cycles of $g_\eta^*\scF[d]$.
\item If $g$ is surjective, and if the $\alpha$-nearby cycles of $g_\eta^*\scF[d]$ are well defined, then so are the $\alpha$-nearby cycles of $\scF$.
\end{enumerate}
In either case, there is a natural isomorphism
\[
\Upsilon^\alpha_{f g}(g_\eta^*\scF[d]) \cong (g^{\alpha}_\eta)^* \Upsilon^\alpha_f(\scF)[d].
\]
\end{lem}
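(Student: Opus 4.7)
The plan is to establish, for every function $\ba : P \to \Z_{\geq 1}$, a natural isomorphism
\[
\bi_{Y,\alpha}^* (\bj_Y)_* \bigl( g_\eta^*\scF[d] \otimes (fg)_\eta^* \scL_\ba \bigr) \simeq (g^\alpha_\eta)^* \bi_{X,\alpha}^* (\bj_X)_* (\scF \otimes f_\eta^* \scL_\ba) \, [d]
\]
compatible with the transition maps~\eqref{eqn:unc-jordan} associated to inequalities $\ba \leq \bb$, and then to translate the conditions of Definition~\ref{def:main} across it. The isomorphism is built from three elementary inputs: the fact that $\scL_\ba$ is a local system (so $g_\eta^*$ commutes with the relevant tensor product, and $g_\eta^* f_\eta^* = (fg)_\eta^*$); smooth base change applied to the cartesian square with rows $\bj_X$, $\bj_Y$ and columns $g$, $g_\eta$, which yields $(\bj_Y)_* \circ g_\eta^* \simeq g^* \circ (\bj_X)_*$ (valid since $g$ is smooth and $\mathrm{char}(\bk)$ is invertible in $\F$); and the identity $g \circ \bi_{Y,\alpha} = \bi_{X,\alpha} \circ g^\alpha_\eta$ coming from the cartesian square defining $Y^\alpha_\eta$, which gives $\bi_{Y,\alpha}^* \circ g^* = (g^\alpha_\eta)^* \circ \bi_{X,\alpha}^*$. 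Concatenating the three identifications produces the displayed isomorphism.

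The key remaining input is that $g^\alpha_\eta$, being a base change of $g$, is itself smooth of relative dimension $d$, so $(g^\alpha_\eta)^*[d]$ is t-exact for the perverse t-structure (see~\cite[\S 4.2.4]{bbd}). Applying $\pH^i$ to the isomorphism above then yields
\[
\pH^i \bigl( \bi_{Y,\alpha}^* (\bj_Y)_* (g_\eta^*\scF[d] \otimes (fg)_\eta^* \scL_\ba) \bigr) \simeq (g^\alpha_\eta)^* \pH^i \bigl( \bi_{X,\alpha}^* (\bj_X)_* (\scF \otimes f_\eta^* \scL_\ba) \bigr) [d],
\]
identifying the map~\eqref{eqn:unc-jordan} for $g_\eta^*\scF[d]$ with the image under the t-exact functor $(g^\alpha_\eta)^*[d]$ of the corresponding map for $\scF$. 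Since any t-exact functor preserves the stabilization and vanishing conditions of Definition~\ref{def:main}, assertion (1) and the asserted formula $\Upsilon^\alpha_{fg}(g_\eta^*\scF[d]) \cong (g^\alpha_\eta)^* \Upsilon^\alpha_f(\scF)[d]$ both follow at once.

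For part (2) the additional ingredient is that when $g$ is surjective so is $g^\alpha_\eta$, and then $(g^\alpha_\eta)^*[d]$ is not only t-exact but also faithful on perverse sheaves (as invoked in the footnote after Remark~\ref{rmk:decomposition-alpha}); hence it detects both vanishings and isomorphisms, so the Definition~\ref{def:main} conditions for $g_\eta^*\scF[d]$ descend to the analogous conditions for $\scF$. The only step needing real care is justifying smooth base change for $(\bj_Y)_* g_\eta^*$ in the étale setting with our coefficients $\bk$; once that is granted, everything else is formal manipulation of t-exact functors and perverse cohomology, and I expect no essential obstacle beyond careful bookkeeping of the shifts.
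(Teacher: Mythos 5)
Your proposal is correct and follows the same route as the paper's own (very terse) proof: the paper also cites smooth base change plus t-exactness of shifted smooth pullback for part (1), and faithfulness/isomorphism-detection of smooth surjective pullback on perverse sheaves for part (2), leaving the details to the reader. You have essentially supplied those details, and they are all accurate.
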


\begin{proof}
 The first claim follows from the smooth base change theorem and t-exactness of shifted smooth pullbacks. The second claim follows from the fact that pullback under a smooth surjective morphism is faithful on perverse sheaves and detects isomorphisms, as in Remark~\ref{rmk:decomposition-alpha}. Details are left to the reader.
\end{proof}

\begin{lem}
\label{lem:compat-proper}
Let $g: Y \to X$ be a proper morphism, and let $\scF \in \Perv(Y_\eta,\bk)$.  Assume that the following conditions hold:
\begin{enumerate}
\item the $\alpha$-nearby cycles of $\scF$ are well defined;
\item both $(g_{\eta})_* \scF$ and $(g^\alpha_{\eta})_* \Upsilon^\alpha_{f g}(\scF)$ are perverse.
\end{enumerate}
Then the $\alpha$-nearby cycles of $(g_{\eta})_*\scF$ are well defined, and there is a natural isomorphism
\[
\Upsilon^\alpha_f((g_{\eta})_*\scF) \cong (g^\alpha_{\eta})_* \Upsilon^\alpha_{f g}(\scF).
\]
\end{lem}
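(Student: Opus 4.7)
\emph{Plan.} I would prove this in three steps: (i) use proper base change and the projection formula to identify $L_\ba := \bi_{X,\alpha}^*(\bj_X)_*((g_\eta)_*\scF \otimes f_\eta^*\scL_\ba)$ with $(g^\alpha_\eta)_* K_\ba$, where $K_\ba := \bi_{Y,\alpha}^*(\bj_Y)_*(\scF \otimes (fg)_\eta^*\scL_\ba)$; (ii) analyze the long exact sequence of perverse cohomology coming from the perverse truncation triangle on $K_\ba$, using right $t$-exactness of proper pushforward and the perverseness hypothesis on $(g^\alpha_\eta)_*\Upsilon^\alpha_{fg}(\scF)$; (iii) use the stabilization provided by Lemmas~\ref{lem:altdefn} and~\ref{lem:ph-injective} to kill the unwanted contributions.

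\emph{Base-change identification.} Set $n := |Q|-|P|$. The square
\[
\begin{tikzcd}
Y^\alpha_\eta \ar[r, "g^\alpha_\eta"] \ar[d, "\bi_{Y,\alpha}"'] & X^\alpha_\eta \ar[d, "\bi_{X,\alpha}"] \\
Y \ar[r, "g"'] & X
\end{tikzcd}
\]
is Cartesian, and $g$ is proper; so proper base change applied to this square, combined with the functoriality $(\bj_X)_*(g_\eta)_* = g_*(\bj_Y)_*$ and the projection formula $(g_\eta)_*\scF \otimes f_\eta^*\scL_\ba \cong (g_\eta)_*(\scF \otimes (fg)_\eta^*\scL_\ba)$, will give the natural isomorphism $L_\ba \cong (g^\alpha_\eta)_* K_\ba$, compatible with transition maps as $\ba$ varies. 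I would then apply $(g^\alpha_\eta)_*$ to the perverse truncation triangle $\pH^n(K_\ba)[-n] \to K_\ba \to C_\ba \xrightarrow{+1}$ (with $C_\ba := {}^p\tau^{>n}K_\ba$) and pass to perverse cohomology. Right $t$-exactness of proper pushforward gives $\pH^j((g^\alpha_\eta)_*\pH^n(K_\ba)[-n]) = 0$ for $j > n$ and any $\ba$; and for $\ba$ large enough that $\pH^n(K_\ba) = \Upsilon^\alpha_{fg}(\scF) =: \Upsilon$, the perverseness assumption on $(g^\alpha_\eta)_*\Upsilon$ places it in perverse degree $0$ only, so this group also vanishes for $j < n$. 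The LES then yields $\pH^i(L_\ba) \cong \pH^i((g^\alpha_\eta)_*C_\ba)$ for $i > n$ and, for $\ba$ large, an exact sequence
\[
\pH^{n-1}((g^\alpha_\eta)_*C_\ba) \xrightarrow{\psi_\ba} (g^\alpha_\eta)_*\Upsilon \to \pH^n(L_\ba) \to \pH^n((g^\alpha_\eta)_*C_\ba) \to 0. \quad (\star)
\]

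\emph{Vanishing and stabilization.} For $i < n$, Lemma~\ref{lem:degrees} applied to the perverse sheaf $(g_\eta)_*\scF$ gives $\pH^i(L_\ba) = 0$. For $i > n$ and any $\ba$, Lemma~\ref{lem:altdefn} applied to $\scF$ furnishes $\bb \geq \ba$ with $C_\ba \to C_\bb$ zero, so $\pH^i(L_\ba) \to \pH^i(L_\bb) = 0$; this handles the second clause of Definition~\ref{def:main}. The core of the argument is the case $i = n$. For $\ba$ large I would pick $\bb \geq \ba$ with $C_\ba \to C_\bb$ zero (automatically also large); the naturality square for $\psi$ then has right vertical $\mathrm{id}$ and left vertical $0$, which forces $\psi_\ba = 0$ and reduces $(\star)$ to a short exact sequence
\[
0 \to (g^\alpha_\eta)_*\Upsilon \to \pH^n(L_\ba) \to \pH^n((g^\alpha_\eta)_*C_\ba) \to 0.
\]
Comparing this SES for $\ba$ and $\bb$ — where the right-hand transition is zero, the left-hand one is $\mathrm{id}$, and the middle map $\pH^n(L_\ba) \to \pH^n(L_\bb)$ is injective by Lemma~\ref{lem:ph-injective} (applicable since $(g_\eta)_*\scF$ is perverse) — a diagram chase shows the middle map factors through $(g^\alpha_\eta)_*\Upsilon \hookrightarrow \pH^n(L_\bb)$ and exhibits the canonical inclusion $(g^\alpha_\eta)_*\Upsilon \hookrightarrow \pH^n(L_\ba)$ as a split monomorphism whose retraction is also injective, hence an isomorphism. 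This forces $\pH^n((g^\alpha_\eta)_*C_\ba) = 0$ and $\pH^n(L_\ba) \cong (g^\alpha_\eta)_*\Upsilon$ naturally in $\ba$, verifying the first clause of Definition~\ref{def:main} and giving the stated isomorphism $\Upsilon^\alpha_f((g_\eta)_*\scF) \cong (g^\alpha_\eta)_*\Upsilon^\alpha_{fg}(\scF)$.

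\emph{Main obstacle.} The hard point is that proper pushforward is only right $t$-exact, not $t$-exact, so $(g^\alpha_\eta)_*C_\ba$ contributes to $(\star)$ in both degree $n-1$ (through $\psi_\ba$) and degree $n$ (as a cokernel). Both contributions can only be eliminated by passing to a larger $\bb$ via Lemma~\ref{lem:altdefn}; the $i=n$ comparison additionally relies crucially on the injectivity provided by Lemma~\ref{lem:ph-injective}, whose hypothesis here is exactly that $(g_\eta)_*\scF$ is perverse.
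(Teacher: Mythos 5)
Your proposal is correct and follows essentially the same strategy as the paper's proof: base change plus the projection formula to identify $L_\ba \cong (g^\alpha_\eta)_* K_\ba$, pushing forward the perverse truncation triangle of $K_\ba$, then using Lemma~\ref{lem:degrees} (low-degree vanishing for the perverse sheaf $(g_\eta)_*\scF$), Lemma~\ref{lem:altdefn} (stabilization killing ${}^p\tau^{>n}K_\ba$), Lemma~\ref{lem:ph-injective} (injectivity of the degree-$n$ transition), and the two perversity hypotheses to pin everything down. The only point of divergence is the packaging of the final diagram chase. The paper arranges the long exact sequences of perverse cohomology into a four-row diagram with exact columns, first shows $\pH^{r-1}(g^\alpha_{\eta*}({}^p\tau^{>r}K_\ba)) = 0$ by a ``go around the square'' argument, then invokes a four-lemma to kill $\pH^r$ as well, and finally recognizes the pushed-forward triangle as the truncation triangle for $L_\ba$ so that Lemma~\ref{lem:altdefn} applies directly. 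You instead first observe that the connecting map $\psi_\ba$ vanishes (a one-line naturality argument), reduce to a genuine short exact sequence, and run a three-term comparison using the split-mono-with-injective-retraction observation to force $\pH^n((g^\alpha_\eta)_*C_\ba) = 0$. Both chases establish the same vanishing and the same identification; yours is arguably a little slicker (it avoids the explicit four-lemma), while the paper's makes the reduction to Lemma~\ref{lem:altdefn} slightly more transparent by literally rewriting~\eqref{eqn:cptprop-calc} as the truncation triangle~\eqref{eqn:cptprop-calc2}.
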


\begin{proof}
To simplify notation we set $r=|Q|-|P|$ and $h=f g$. For any $\alpha$-special functions $\ba, \bb: P \to \Z_{\ge 1}$ with $\ba \le \bb$, we can form the following commutative diagram, in which the columns are truncation distinguished triangles (in the top row, we use Lemma~\ref{lem:degrees} to identify ${}^p\tau^{\le r}(-)$ with $\pH^{r}(-)[-r]$):
\begin{equation}\label{eqn:cptprop-calc0}
\hbox{\small$
\begin{tikzcd}[column sep=small, row sep=small]
 \pH^{r}(\bi_{Y,\alpha}^*\bj_{Y*}(\scF \otimes h_\eta^*\scL_\ba))[-r] \ar[r] \ar[d] &
 \pH^{r}(\bi_{Y,\alpha}^*\bj_{Y*}\scF \otimes h_\eta^*\scL_\bb)[-r] \ar[d] \\
\bi_{Y,\alpha}^*\bj_{Y*}(\scF \otimes h_\eta^*\scL_\ba) \ar[r] \ar[d] &
   \bi_{Y,\alpha}^*\bj_{Y*}(\scF \otimes h_\eta^*\scL_\bb) \ar[d] \\
{}^p\tau^{>r}\bi_{Y,\alpha}^*\bj_{Y*}(\scF \otimes h_\eta^*\scL_\ba) \ar[r] \ar[d, "+1"] & 
 {}^p\tau^{>r}\bi_{Y,\alpha}^*\bj_{Y*}(\scF \otimes h_\eta^*\scL_\bb) \ar[d, "+1"] \\
{} & {}
\end{tikzcd}
$}
\end{equation}
Since the $\alpha$-nearby cycles of $\scF$ are well defined, by Lemma~\ref{lem:altdefn}, we may choose $\ba$ such that the top horizontal map is an isomorphism for any $\bb \geq \ba$ (so that these objects identify with $\Upsilon^\alpha_{h}(\scF)$), and then choose $\bb$ such that the bottom horizontal map is $0$.  

By base change and the projection formula, we have
\[
g^\alpha_{\eta*}\bi_{Y,\alpha}^*\bj_{Y*}(\scF \otimes h_\eta^*\scL_\ba)
\cong \bi_{X,\alpha}^*\bj_{X*}g_{\eta*}(\scF \otimes g_\eta^*f_\eta^*\scL_\ba)
\cong \bi_{X,\alpha}^* \bj_{X*}((g_{\eta*}\scF) \otimes f_\eta^*\scL_\ba).
\]
Thus, applying $g^\alpha_{\eta*}$ to~\eqref{eqn:cptprop-calc0}, we obtain a diagram
\begin{equation}
\label{eqn:cptprop-calc}
\hbox{\small$
\begin{tikzcd}[column sep=small, row sep=small]
 g^\alpha_{\eta*}\pH^{r}(\bi_{Y,\alpha}^*\bj_{Y*}(\scF \otimes h_\eta^*\scL_\ba))[-r] \ar[r, "\sim"] \ar[d] &
  g^\alpha_{\eta*}\pH^{r}(\bi_{Y,\alpha}^*\bj_{Y*}(\scF \otimes h_\eta^*\scL_\bb))[-r] \ar[d] \\
\bi_{X,\alpha}^*\bj_{X*}(g_{\eta*}\scF \otimes f_\eta^*\scL_\ba) \ar[r] \ar[d] &
   \bi_{X,\alpha}^*\bj_{X*}(g_{\eta*}\scF \otimes f_\eta^*\scL_\bb) \ar[d] \\
 g^\alpha_{\eta*}({}^p\tau^{>r}\bi_{Y,\alpha}^*\bj_{Y*}(\scF \otimes h_\eta^*\scL_\ba)) \ar[r, "0"] \ar[d, "+1"] & 
  g^\alpha_{\eta*}({}^p\tau^{>r}\bi_{Y,\alpha}^*\bj_{Y*}(\scF \otimes h_\eta^*\scL_\bb)) \ar[d, "+1"] \\
{} & {}
\end{tikzcd}$}
\end{equation}
whose columns are distinguished triangles.
The objects in the top row are identified with $g^\alpha_{\eta*}\Upsilon^\alpha_{h}(\scF)[-r]$; in particular, by assumption, they are concentrated in perverse degree $r$.  Since $g_{\eta*}\scF$ is assumed to be perverse, Lemma~\ref{lem:degrees} tells us that the objects in the middle row live in perverse degrees${}\ge r$. It follows that
\[
\pH^i(g^\alpha_{\eta*}({}^p\tau^{>r}\bi_{Y,\alpha}^*\bj_{Y*}(\scF \otimes h_\eta^*\scL_\ba))) = 0
\qquad \text{for $i \le r-2$,}
\]
and likewise for $\scL_\bb$.  Taking perverse cohomology, we obtain the following commutative diagram with exact columns:
\[
\hbox{\small$\begin{tikzcd}[column sep=small, row sep=small]
 \pH^{r-1}(g^\alpha_{\eta*}({}^p\tau^{>r}\bi_{Y,\alpha}^*\bj_{Y*}(\scF \otimes h_\eta^*\scL_\ba))) \ar[r, "0"] \ar[d, hook] & 
 \pH^{r-1}(g^\alpha_{\eta*}({}^p\tau^{>r}\bi_{Y,\alpha}^*\bj_{Y*}(\scF \otimes h_\eta^*\scL_\bb))) \ar[d, hook] \\
 g^\alpha_{\eta*}\pH^{r}(\bi_{Y,\alpha}^*\bj_{Y*}(\scF \otimes h_\eta^*\scL_\ba)) \ar[r, "\sim"] \ar[d] &
  g^\alpha_{\eta*}\pH^{r}(\bi_{Y,\alpha}^*\bj_{Y*}(\scF \otimes h_\eta^*\scL_\bb)) \ar[d] \\
\pH^{r}(\bi_{X,\alpha}^*\bj_{X*}(g_{\eta*}\scF \otimes f_\eta^*\scL_\ba))\ar[r] \ar[d, two heads] &
   \pH^{r}(\bi_{X,\alpha}^*\bj_{X*}(g_{\eta*}\scF \otimes f_\eta^*\scL_\bb)) \ar[d, two heads] \\
 \pH^{r}(g^\alpha_{\eta*}({}^p\tau^{>r}\bi_{Y,\alpha}^*\bj_{Y*}(\scF \otimes h_\eta^*\scL_\ba))) \ar[r, "0"] &
 \pH^{r}(g^\alpha_{\eta*}({}^p\tau^{>r}\bi_{Y,\alpha}^*\bj_{Y*}(\scF \otimes h_\eta^*\scL_\bb))).
\end{tikzcd}$}
\]
Here, the third horizontal arrow is injective by Lemma~\ref{lem:ph-injective}. Since the composition of the topmost horizontal arrow with the topmost right vertical arrow is injective, the latter morphism must be injective, which implies that 
the topmost term in the left-hand column vanishes. By one of the four-lemmas, the $0$ morphism on the fourth line is injective, so that the bottommost term in this column also vanishes. We deduce that we actually have
\[
\pH^i(g^\alpha_{\eta*}({}^p\tau^{>r}\bi_{Y,\alpha}^*\bj_{Y*}(\scF \otimes h_\eta^*\scL_\ba))) = 0
\qquad \text{for $i \leq r$.}
\]
The same reasoning also applies to $\bb$, which implies that the columns of~\eqref{eqn:cptprop-calc} can be identified with truncation distinguished triangles: that whole diagram can be rewritten as as
\begin{equation}
\label{eqn:cptprop-calc2}
\hbox{\small$\begin{tikzcd}[column sep=small, row sep=small]
 \pH^{r}(\bi_{X,\alpha}^*\bj_{X*}(g_{\eta*}\scF \otimes f_\eta^*\scL_\ba))[-r] \ar[r, "\sim"] \ar[d] &
 \pH^{r}(\bi_{X,\alpha}^*\bj_{X*}(g_{\eta*}\scF \otimes f_\eta^*\scL_\bb))[-r] \ar[d] \\
\bi_{X,\alpha}^*\bj_{X*}(g_{\eta*}\scF \otimes f_\eta^*\scL_\ba) \ar[r] \ar[d] &
   \bi_{X,\alpha}^*\bj_{X*}(g_{\eta*}\scF \otimes f_\eta^*\scL_\bb) \ar[d] \\
{}^p\tau^{>r}\bi_{X,\alpha}^*\bj_{X*}(g_{\eta*}\scF \otimes f_\eta^*\scL_\ba) \ar[r, "0"] \ar[d, "+1"] & 
 {}^p\tau^{>r}\bi_{X,\alpha}^*\bj_{X*}(g_{\eta*}\scF \otimes f_\eta^*\scL_\bb) \ar[d, "+1"] \\
{} & {}
\end{tikzcd}$}
\end{equation}

Our argument shows that the top (resp.~bottom) row of~\eqref{eqn:cptprop-calc2} is an isomorphism (resp.~zero) whenever the corresponding row of~\eqref{eqn:cptprop-calc0} has the same property.  By 
Lemma~\ref{lem:altdefn}, we conclude that the $\alpha$-nearby cycles of $g_{\eta*}\scF$ are well defined.  The identification of~\eqref{eqn:cptprop-calc} with~\eqref{eqn:cptprop-calc2} shows that $\Upsilon^\alpha_f(g_{\eta*}\scF) \cong g^\alpha_{\eta*} \Upsilon^\alpha_{f g}(\scF)$.
\end{proof}

\subsection{Compositions of higher nearby cycles: construction}

Let $\alpha: P_* \to Q_*$ and $\beta: Q_* \to R_*$ be pointed maps, and let $\scF \in \Perv(X_\eta,\bk)$ be an object which satisfies the following properties:
\begin{itemize}
\item the $\alpha$-nearby cycles and the $\beta\alpha$-nearby cycles of $\scF$ are well defined;
\item the $\beta$-nearby cycles of $\Upsilon_f^\alpha(\scF)$ are well defined.
\end{itemize}
In the rest of this subsection we explain how, in this setting, one can define a canonical morphism
\begin{equation}
\label{eqn:commnc-compose}
\Upsilon^{\beta \alpha}_f(\scF) \to \Upsilon^\beta_{f^\alpha}(\Upsilon_f^\alpha(\scF)).
\end{equation}

Let $\bc: P \to \Z_{\ge 1}$ be a $\beta\alpha$-special function.  Recall that this means that $\bc(p) \ne 1$ implies $\beta(\alpha(p)) = *$.  Define two new functions $\ba,\bb: P \to \Z_{\ge 1}$ by
\[
\ba(p) = 
\begin{cases}
\bc(p) & \text{if $\alpha(p) = *$,} \\
1 & \text{otherwise,}
\end{cases}
\qquad
\bb(p) =
\begin{cases} 
\bc(p) & \text{if $\beta(\alpha(p)) = *$ but $\alpha(p) \ne *$,} \\
1 & \text{otherwise.}
\end{cases}
\]
We clearly have that $\ba$ is $\alpha$-special, and that
$\scL_\bc \cong \scL_\ba \otimes \scL_\bb$.
Next, define $\bb': Q \to \Z_{\ge 1}$ by
\[
\bb'(q) = 
- |\alpha^{-1}(q)| + 1 + \sum_{p \in \alpha^{-1}(q)} \bb(p).
\]
We claim that $\bb'$ is $\beta$-special.  Indeed, if $\beta(q) \ne *$, then the summation involves only elements $p$ satisfying $\beta(\alpha(p)) \ne *$, and the claim follows from the fact that $\bc$ is $\beta\alpha$-special. Note also that if $\beta(q)=*$ we have
\begin{equation}
\label{eqn:b'-c}
\bb'(q) = 
- |\alpha^{-1}(q)| + 1 + \sum_{p \in \alpha^{-1}(q)} \bc(p).
\end{equation}

Recall the open subscheme $\bA^P_{\eta,\alpha}$ from~\S\ref{ss:pointed-maps} and the morphism $\bar\alpha_{\eta} : \bA^Q_{\eta} \to \bA^P_{\eta,\alpha}$. Note that the local system $\scL_{\bb}$ on $\bA^P_\eta$ extends (uniquely) to a local system $\scL_{\bb,\alpha}$ on $\bA^P_{\eta,\alpha}$. 
We claim that there exists a natural morphism 
\begin{equation}\label{eqn:jordan-pullback}
\bar\alpha_\eta^*\scL_{\bb,\alpha} \to \scL_{\bb'}
\end{equation}
of local systems on $\bA^Q_\eta$.  Indeed, for $q \in Q$, the $q$th copy of $\bA^1$ in $\bA^Q$ is mapped under $\bar\alpha$ to the diagonal copy of $\bA^1$ inside $\bA^{\alpha^{-1}(q)}$.
It follows that 
\[
\bar\alpha_\eta^*\scL_{\bb,\alpha} \cong \bigboxtimes_{q \in Q} \Big( \bigotimes_{p \in \alpha^{-1}(q)} \scL_{\bb(p)} \Big).
\]
The morphisms~\eqref{eqn:jordan-mult} provide a map
\[
\bigotimes_{p \in \alpha^{-1}(q)} \scL_{\bb(p)} \to \scL_{\bb'(q)}
\]
for each $q$; taking the external tensor product over $q$, we obtain~\eqref{eqn:jordan-pullback}.

Note now that the restriction of $\bar\alpha$ to $\bA^Q_{\eta,\beta}$ factors through $\bA^P_{\eta,\beta\alpha}$, which allows to define the morphism
$\bi_{X,\alpha,\beta} : X^\alpha_{\eta,\beta} \to X_{\eta,\beta\alpha}$
by base change.
We consider the commutative diagram as follows, where the unlabelled arrow is the obvious open immersion:
\[
\begin{tikzcd}[row sep=tiny]
X^{\beta\alpha}_\eta \ar[ddd, "\bj_{X^{\beta\alpha}}"'] \ar[rd, "\bh_{X^\alpha,\beta}" near start] && 
  X^\alpha_\eta \ar[ddd, "\bj_{X^\alpha}" near start] \ar[rrd, "\bh_{X,\alpha}"] \ar[ld, "\bj_{X^\alpha,\beta,1}"'] &&&
  X_\eta \ar[ddd, "\bj_X"] \ar[ld, "\bj_{X,\alpha,1}" near end] \ar[lldd, "\bj_{X,\beta\alpha,1}"' near start, bend right, crossing over] \\
  & X^{\alpha}_{\eta,\beta} \ar[rrd, "\bi_{X,\alpha,\beta}" near end, crossing over] \ar[rdd, "\bj_{X^\alpha,\beta,2}" description] &&& X_{\eta,\alpha} \ar[rdd, "\bj_{X,\alpha,2}" description] \ar[ld] & \\
  &&& X_{\eta,\beta\alpha} \ar[rrd, "\bj_{X,\beta\alpha,2}" description] &&\\
X^{\beta\alpha} \ar[rr, "\bi'_{X^\alpha,\beta}"'] && X^\alpha \ar[rrr, "\bi'_{X,\alpha}"'] &&& X.
\end{tikzcd}
\]
We have a sequence of natural maps or isomorphisms as follows:
\begin{multline*}
\bi_{X,\beta\alpha}^*(\bj_X)_*(\scF \otimes
f_\eta^*\scL_\bc)
\cong \bh_{X^\alpha,\beta}^* \bi_{X,\alpha,\beta}^* (\bj_{X,\beta\alpha,1})_*(\scF \otimes
f_\eta^*\scL_\bc) \\
\xrightarrow{\text{adjunction}} \bh_{X^\alpha,\beta}^* (\bj_{X^\alpha,\beta,1})_* (\bj_{X^\alpha,\beta,1})^* \bi_{X,\alpha,\beta}^* (\bj_{X,\beta\alpha,1})_*(\scF \otimes
f_\eta^*\scL_\bc) \\
\cong \bh_{X^\alpha,\beta}^* (\bj_{X^\alpha,\beta,1})_* \bh_{X,\alpha}^* (\bj_{X,\alpha,1})_*(\scF 
\otimes f_\eta^*\scL_\ba 
\otimes f_\eta^* \scL_{\bb}).
\end{multline*}
Now, recall the local system $\scL_{\bb,\alpha}$, and denote by $f_{\eta,\alpha} : X_{\eta,\alpha} \to \bA^P_{\eta,\alpha}$ the morphism induced by $f$. By adjunction and compatibility of pullback with tensor product, there exists a canonical morphism
\[
(\bj_{X,\alpha,1})_*(\scF \otimes
f_\eta^*\scL_\ba) \otimes
f_{\eta,\alpha}^* \scL_{\bb,\alpha} \to (\bj_{X,\alpha,1})_*(\scF \otimes
f_\eta^*\scL_\ba 
\otimes f_\eta^* \scL_{\bb}).
\]
This morphism becomes an isomorphism if $\scL_{\bb,\alpha}$ is replaced by the constant local system; since after pullback to $\overline{\F}$ the local system $\scL_{\bb,\alpha}$ is an extension of copies of this constant sheaf, we deduce that it is an isomorphism too. We deduce identifications
\begin{multline*}
\bh_{X^\alpha,\beta}^* (\bj_{X^\alpha,\beta,1})_* \bh_{X,\alpha}^* (\bj_{X,\alpha,1})_*(\scF \otimes f_\eta^*\scL_\ba \otimes f_\eta^* \scL_{\bb}) \\
\cong \bh_{X^\alpha,\beta}^* (\bj_{X^\alpha,\beta,1})_* \bh_{X,\alpha}^* \left( (\bj_{X,\alpha,1})_*(\scF \otimes f_\eta^*\scL_\ba) \otimes f_{\eta,\alpha}^* \scL_{\bb,\alpha} \right) \\
\cong \bh_{X^\alpha,\beta}^* (\bj_{X^\alpha,\beta,1})_* \left( \bigl( \bh_{X,\alpha}^* (\bj_{X,\alpha,1})_*(\scF \otimes f_\eta^*\scL_\ba) \bigr) \otimes \bigl( \bh_{X^\alpha,\beta}^* f_{\eta,\alpha}^* \scL_{\bb,\alpha} \bigr) \right) \\
\cong \bh_{X^\alpha,\beta}^* (\bj_{X^\alpha,\beta,1})_* \left( \bigl( \bh_{X,\alpha}^* (\bj_{X,\alpha,1})_*(\scF \otimes f_\eta^*\scL_\ba) \bigr) \otimes \bigl( (f^\alpha_{\eta})^* \bar\alpha_\eta^*\scL_{\bb,\alpha} \bigr) \right).
\end{multline*}
Using~\eqref{eqn:jordan-pullback} we deduce a canonical morphism
\begin{multline*}
\bh_{X^\alpha,\beta}^* (\bj_{X^\alpha,\beta,1})_* \bh_{X,\alpha}^* (\bj_{X,\alpha,1})_*(\scF \otimes f_\eta^*\scL_\ba \otimes f_\eta^* \scL_{\bb}) \\
\to \bh_{X^\alpha,\beta}^* (\bj_{X^\alpha,\beta,1})_* \left( \bigl( \bh_{X,\alpha}^* (\bj_{X,\alpha,1})_*(\scF \otimes f_\eta^*\scL_\ba) \bigr) \otimes (f^\alpha_{\eta})^* \scL_{\bb'} \right).
\end{multline*}
Using~\eqref{eqn:simplification-Upsilon}, Lemma~\ref{lem:degrees}, and the fact that tensoring with $(f^\alpha_{\eta})^* \scL_{\bb'}$ is exact for the perverse t-structure, applying perverse cohomology in degree $|R|-|P|$ to the composition of the maps above we deduce a canonical morphism
\begin{multline}
\label{eqn:morphism-composition}
\pH^{|R|-|P|}( \bi_{X,\beta\alpha}^*(\bj_X)_*(\scF \otimes f_\eta^*\scL_\bc)) \to \\
\pH^{|Q|-|R|} ( \bh_{X^\alpha,\beta}^* (\bj_{X^\alpha,\beta,1})_* ( \pH^{|P|-|Q|}( ( \bh_{X,\alpha}^* (\bj_{X,\alpha,1})_*(\scF \otimes f_\eta^*\scL_\ba) ) \otimes (f^\alpha_{\eta})^* \scL_{\bb'} ) )).
\end{multline}
When $\bc$ is large (among $\beta\alpha$-special maps) then $\ba$ is large (among $\alpha$-special maps) and $\bb'$ is large (among $\beta$-special maps) in view of~\eqref{eqn:b'-c}. Hence in this case~\eqref{eqn:morphism-composition} provides the morphism we were looking for.

\begin{rmk}
Suppose that $|P| = 2$, $|Q| = 1$, $R=\varnothing$, and $\alpha$ is nonconstant.  Then the morphism~\eqref{eqn:commnc-compose} is that appearing in~\cite[Lemma~9.4.3 or Lemma~9.4.11]{arbook}, depending on the size of $\alpha^{-1}(*)$.
\end{rmk}

\subsection{Compositions of higher nearby cycles: properties}

The following three statements give compatibility properties of the morphisms~\eqref{eqn:commnc-compose}. Each of them can be checked on definitions.

\begin{lem}\label{lem:commnc-assoc}
Suppose we have three pointed maps $\alpha: P_* \to Q_*$, $\beta: Q_* \to R_*$, and $\gamma: R_* \to S_*$.  If all the objects in the diagram below are defined, then the diagram commutes, where each arrow is an instance of~\eqref{eqn:commnc-compose}:
\[
\begin{tikzcd}[row sep=small]
\Upsilon^{\gamma\beta\alpha}_f(\scF) \ar[r] \ar[d] & \Upsilon^\gamma_{f^{\beta\alpha}}(\Upsilon^{\beta\alpha}_f(\scF)) \ar[d] \\
\Upsilon^{\gamma\beta}_{f^\alpha}(\Upsilon^\alpha_f(\scF)) \ar[r] & \Upsilon^\gamma_{f^{\beta\alpha}}(\Upsilon^\beta_{f^\alpha}(\Upsilon^\alpha_f(\scF))).
\end{tikzcd}
\]
\end{lem}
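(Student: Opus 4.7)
The plan is to verify directly, and largely from the definitions, that the two composites in the square agree, by comparing both at the level of the representatives that define~\eqref{eqn:morphism-composition}. The argument is primarily diagrammatic bookkeeping; the only nontrivial mathematical content is the associativity of the iterated multiplication map~\eqref{eqn:jordan-mult}.

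First, I would fix a $\gamma\beta\alpha$-special function $\mathbf{d} : P \to \Z_{\geq 1}$ with $\mathbf{d}(p)$ sufficiently large for $p \in (\gamma\beta\alpha)^{-1}(*) \cap P$, so that each of the four vertices of the square is represented by the perverse cohomology group built from $\scL_\mathbf{d}$ together with the associated shifts and adjunction steps on the various $X^?_{\eta,?}$. The induced function on $R$ obtained by going from $P$ to $Q$ via $\bar\alpha$ and then from $Q$ to $R$ via $\bar\beta$ coincides with the one obtained directly via $\bar{\beta\alpha}$ (a direct computation from~\eqref{eqn:b'-c} using the identity $\sum_{q \in \beta^{-1}(r)}(1 - |\alpha^{-1}(q)|) = |\beta^{-1}(r)| - |(\beta\alpha)^{-1}(r)|$), so all four constructions really depend only on $\mathbf{d}$ at every stage of either path.

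Second, I would unravel each composite as a sequence of applications of the construction~\eqref{eqn:morphism-composition}. Both routes reduce to the same sequence of elementary moves, performed in different orders: (i) decompose $\scL_\mathbf{d}$ as a tensor product of factors grouped according to the relevant preimages under $\alpha$, $\beta\alpha$, or $\gamma\beta\alpha$; (ii) introduce the adjunction unit for one of the nested open immersions $\bj_{X,\alpha,1}$, $\bj_{X^\alpha,\beta,1}$, $\bj_{X,\beta\alpha,1}$, $\bj_{X^{\beta\alpha},\gamma,1}$, $\bj_{X^\alpha,\gamma\beta,1}$, and move across it the tensor factor corresponding to local systems that extend across the embedding (valid, as in~\eqref{eqn:morphism-composition}, because after pullback to $\overline{\F}$ those local systems are iterated extensions of constants); (iii) apply the multiplication morphism~\eqref{eqn:jordan-pullback} to push the appropriate tensor factor along one of $\bar\alpha$, $\bar\beta$, $\bar{\beta\alpha}$; (iv) take perverse cohomology.

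Third, the commutativity of the square reduces to two compatibilities. The first is the associativity of~\eqref{eqn:jordan-mult}: for any positive integers $a_1, \ldots, a_n$ and any bracketing, iterating the bilinear multiplication $\scL_a \otimes \scL_b \to \scL_{a+b-1}$ of Lemma~\ref{lem:jordan-basic}\eqref{it:la-mult} produces the same morphism $\scL_{a_1} \otimes \cdots \otimes \scL_{a_n} \to \scL_{a_1 + \cdots + a_n - n + 1}$. This is immediate because that bilinear multiplication is, by construction, the restriction of the (associative) pointwise multiplication on $\cC(\Z_\ell(1),\bk)$. The second is the standard compatibility of adjunction units and base-change isomorphisms across several nested open/closed immersions in the diagram preceding~\eqref{eqn:morphism-composition}. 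The main obstacle is purely notational: making the two paths visibly coincide requires enlarging that diagram to incorporate all the intermediate schemes $X^\alpha$, $X^{\beta\alpha}$, $X^\alpha_{\eta,\beta}$, $X^\alpha_{\eta,\gamma\beta}$, $X^{\beta\alpha}_{\eta,\gamma}$, and checking cell-by-cell that each square commutes by one of these standard functorialities or by the associativity of~\eqref{eqn:jordan-mult}.
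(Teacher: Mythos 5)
Your proposal is correct and matches the paper's intended approach: the paper gives no detailed proof of Lemma~\ref{lem:commnc-assoc}, saying only that it (along with the neighboring compatibility lemmas) ``can be checked on definitions.'' Your write-up is a faithful elaboration of that, correctly isolating the two ingredients that make the check go through---the associativity of the iterated multiplication~\eqref{eqn:jordan-mult} (inherited from the associative pointwise product on $\cC(\Z_\ell(1),\bk)$, via Lemma~\ref{lem:jordan-basic}\eqref{it:la-mult}) and the standard compatibilities of adjunction units with base change across the nested immersions in the diagram preceding~\eqref{eqn:morphism-composition}---together with the bookkeeping identity relating the induced special functions, which you verify via the disjoint-union decomposition $(\beta\alpha)^{-1}(r) = \bigsqcup_{q\in\beta^{-1}(r)}\alpha^{-1}(q)$ and~\eqref{eqn:b'-c}.
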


\begin{lem}
\label{lem:compat-compose-smooth}
Let $g: Y \to X$ be a smooth morphism of relative dimension $d$, let $\alpha: P_* \to Q_*$, $\beta: Q_* \to R_*$ be pointed maps, let $\scF \in \Perv(X_\eta,\bk)$. Assume that:
\begin{itemize}
\item
the $\alpha$-nearby cycles and the $\beta\alpha$-nearby cycles of $\scF$ are well defined;
\item
the $\beta$-nearby cycles of $\Upsilon_f^\alpha(\scF)$ are well defined.
\end{itemize}
Then the $\alpha$-nearby cycles and the $\beta\alpha$-nearby cycles of $g_\eta^* \scF[d]$, and the $\beta$-nearby cycles of $\Upsilon_{f g}^\alpha(g_\eta^* \scF[d])$, are all well defined, and the morphism
\[
\Upsilon_{f g}^{\beta\alpha}(g_\eta^* \scF[d]) \to 
\Upsilon^\beta_{(f g)^\alpha}(\Upsilon_{f g}^\alpha(g_\eta^* \scF[d]))
\]
given by~\eqref{eqn:commnc-compose}
is, taking into account the identifications of Lemma~\ref{lem:compat-smooth}, the image under $(g^\alpha_\eta)^*[d]$ of the corresponding morphism
$\Upsilon^{\beta \alpha}_f(\scF) \to \Upsilon^\beta_{f^\alpha}(\Upsilon_f^\alpha(\scF))$.\qed
\end{lem}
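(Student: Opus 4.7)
The plan is to verify the claim by tracing through the construction of~\eqref{eqn:commnc-compose} step by step and observing that every ingredient commutes with smooth pullback in a natural way. First, I would invoke Lemma~\ref{lem:compat-smooth} three times: to conclude that the $\alpha$- and $\beta\alpha$-nearby cycles of $g_\eta^*\scF[d]$ are well defined, and, using the identification $\Upsilon^\alpha_{fg}(g_\eta^*\scF[d]) \cong (g^\alpha_\eta)^* \Upsilon^\alpha_f(\scF)[d]$, to conclude that the $\beta$-nearby cycles of $\Upsilon^\alpha_{fg}(g_\eta^*\scF[d])$ are well defined (noting that $g^\alpha_\eta$ is smooth of relative dimension $d$, as a base change of $g$).

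Next I would unpack the definition of the morphism~\eqref{eqn:commnc-compose} as given in the construction preceding Lemma~\ref{lem:commnc-assoc}. That construction is a composition of four types of natural transformations: (i) smooth base change isomorphisms (for squares involving the immersions $\bh$ and $\bi$); (ii) the adjunction unit for $\bj_{X^\alpha,\beta,1}$; (iii) the natural morphism $(\bj_{X,\alpha,1})_*(\scF \otimes f_\eta^*\scL_\ba) \otimes f_{\eta,\alpha}^*\scL_{\bb,\alpha} \to (\bj_{X,\alpha,1})_*(\scF \otimes f_\eta^*\scL_\ba \otimes f_\eta^*\scL_\bb)$ (which is an isomorphism for extensions of constant local systems); and (iv) the pullback under $f^\alpha_\eta$ of the map~\eqref{eqn:jordan-pullback}, which lives entirely on the base $\bA^Q_\eta$. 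Each of these is clearly compatible with further pullback along the smooth morphism $g^\alpha_\eta$ (or its base changes): base change isomorphisms compose coherently, adjunction units are functorial, the tensor/projection compatibility is monoidal in the pullback, and~\eqref{eqn:jordan-pullback} is literally independent of $X$.

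Thus, assembling these observations into a (large but routine) commutative diagram, one sees that applying $(g^\alpha_\eta)^*[d]$ to the morphism~\eqref{eqn:morphism-composition} for $\scF$ yields the morphism~\eqref{eqn:morphism-composition} for $g_\eta^*\scF[d]$, modulo the identifications coming from smooth base change and Lemma~\ref{lem:compat-smooth}. Passing to the colimit over $\bc$ (equivalently, taking $\bc$ sufficiently large), this identifies the morphism of~\eqref{eqn:commnc-compose} for $g_\eta^*\scF[d]$ with the image of that for $\scF$ under $(g^\alpha_\eta)^*[d]$.

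The only mildly delicate point is bookkeeping: one must keep track of the many squares involving $\bh_{X,\alpha}$, $\bh_{X^\alpha,\beta}$, $\bj_{X,\alpha,1}$, $\bj_{X^\alpha,\beta,1}$, $\bi_{X,\alpha,\beta}$, and their counterparts on $Y$, and verify that all the smooth base change isomorphisms fit together; but since $g^\alpha_\eta$ and its further base changes are all obtained by pullback from the single smooth morphism $g$, this is a formal consequence of the coherence of the base change formalism. No perverse-cohomological subtlety intervenes, because shifted smooth pullback is t-exact and commutes with truncation, so applying $\pH^i$ commutes with the passage from $\scF$ to $g_\eta^*\scF[d]$ at every stage of the construction.
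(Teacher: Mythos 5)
Your proposal is correct and takes the same approach the paper has in mind: the paper states (just before Lemma~\ref{lem:commnc-assoc}) that Lemmas~\ref{lem:commnc-assoc}--\ref{lem:compat-compose-proper} "can be checked on definitions" and provides no further argument, and your write-up is a careful articulation of exactly that check — three applications of Lemma~\ref{lem:compat-smooth} for the well-definedness claims, followed by tracing each of the four ingredients of~\eqref{eqn:commnc-compose} (smooth base change, adjunction unit, the projection-formula-type map for $\scL_{\bb,\alpha}$, and the map~\eqref{eqn:jordan-pullback} on the base) through pullback along $g$ and its base changes.
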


\begin{lem}
\label{lem:compat-compose-proper}
Let $g: Y \to X$ be a proper morphism, let $\alpha: P_* \to Q_*$, $\beta: Q_* \to R_*$ be pointed maps, and let $\scF \in \Perv(Y_\eta,\bk)$.  Assume that:
\begin{itemize}
\item
the $\alpha$-nearby cycles and the $\beta \alpha$-nearby cycles of $\scF$ are well defined;
\item
the $\beta$-nearby cycles of $\Upsilon_{fg}^\alpha(\scF)$ are well defined;
\item
the complexes
\[
(g_{\eta})_* \scF, \quad (g^\alpha_{\eta})_* \Upsilon^\alpha_{f g}(\scF), \quad (g^{\beta\alpha}_{\eta})_*\Upsilon^{\beta}_{(f g)^\alpha} (\Upsilon^{\alpha}_{f g}(\scF)) \quad \text{and} \quad (g^{\beta\alpha}_{\eta})_*\Upsilon^{\beta \alpha}_{f g}(\scF)
\]
are perverse.
\end{itemize}
Then the $\alpha$-nearby cycles and the $\beta\alpha$-nearby cycles of $(g_\eta)_* \scF$, and the $\beta$-nearby cycles of $\Upsilon_{f}^\alpha((g_\eta)_*\scF)$, are all well defined, and the morphism
\[
\Upsilon^{\beta \alpha}_{f}((g_\eta)_* \scF) \to \Upsilon^\beta_{f^\alpha}(\Upsilon_{f}^\alpha((g_\eta)_*\scF))
\]
given by~\eqref{eqn:commnc-compose}
is, taking into account the identifications of Lemma~\ref{lem:compat-proper}, the image under $(g^{\beta \alpha}_\eta)_*$ of the corresponding morphism
$\Upsilon^{\beta \alpha}_{f g}(\scF) \to \Upsilon^\beta_{(f g)^\alpha}(\Upsilon_{f g}^\alpha(\scF))$.\qed
\end{lem}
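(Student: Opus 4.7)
The proof is a compatibility check: the existence assertions follow by applying Lemma~\ref{lem:compat-proper} three times, and the naturality of~\eqref{eqn:commnc-compose} follows by inspecting the construction step by step.

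\textbf{Existence of the relevant nearby cycles.} First, applying Lemma~\ref{lem:compat-proper} to $\alpha$ and to $\beta\alpha$ (using the perversity hypotheses on $(g_\eta)_* \scF$, $(g^\alpha_\eta)_* \Upsilon^\alpha_{fg}(\scF)$, and $(g^{\beta\alpha}_\eta)_* \Upsilon^{\beta\alpha}_{fg}(\scF)$), we obtain that the $\alpha$- and $\beta\alpha$-nearby cycles of $(g_\eta)_*\scF$ are well defined, together with canonical identifications
\[
\Upsilon^\alpha_f((g_\eta)_*\scF) \cong (g^\alpha_\eta)_* \Upsilon^\alpha_{fg}(\scF), \qquad \Upsilon^{\beta\alpha}_f((g_\eta)_*\scF) \cong (g^{\beta\alpha}_\eta)_* \Upsilon^{\beta\alpha}_{fg}(\scF).
\]
To access the $\beta$-nearby cycles of $\Upsilon^\alpha_f((g_\eta)_*\scF)$, we apply Lemma~\ref{lem:compat-proper} once more, this time with the proper morphism $g^\alpha_\eta : Y^\alpha_\eta \to X^\alpha_\eta$ in place of $g$ (noting $g^\alpha_\eta$ is still proper, as a base change of $g$), with base $\bA^Q$ instead of $\bA^P$, and with perverse sheaf $\Upsilon^\alpha_{fg}(\scF)$; the two perversity hypotheses required are exactly the middle two from the statement.

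\textbf{Compatibility of the morphism.} The morphism~\eqref{eqn:commnc-compose} for $\scF$ (viewed on $Y$) is obtained as the composition of a sequence of natural maps and isomorphisms: proper base change together with the identification $\bi_{X,\beta\alpha}^*(\bj_X)_* = \bh_{X^\alpha,\beta}^* \bi_{X,\alpha,\beta}^* (\bj_{X,\beta\alpha,1})_*$; an adjunction unit; another base change; the canonical map
\[
(\bj_{X,\alpha,1})_*(\scH \otimes f_\eta^*\scL_\ba) \otimes f_{\eta,\alpha}^* \scL_{\bb,\alpha} \to (\bj_{X,\alpha,1})_*(\scH \otimes f_\eta^*\scL_\ba \otimes f_\eta^*\scL_\bb)
\]
(shown to be an isomorphism because $\scL_{\bb,\alpha}$ is geometrically an iterated extension of constant sheaves); the pullback~\eqref{eqn:jordan-pullback} of local systems on the pure base $\bA^Q$; and finally taking $\pH^{|R|-|P|}$. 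The plan is to show that each of these six steps commutes with pushforward along the appropriate base change of $g$. For the base change and adjunction steps this is the standard compatibility of proper base change with adjunction (and the projection formula, exactly as in the proof of Lemma~\ref{lem:compat-proper}). The tensor-product step commutes with $g_{\eta*}$ by the projection formula applied to the pullback $f_\eta^*\scL_\bb$. The morphism~\eqref{eqn:jordan-pullback} is a morphism of local systems on $\bA^Q_\eta$, pulled back via $f^\alpha_\eta$; tensoring with it clearly commutes with any pushforward along a morphism over $\bA^Q$. Finally, taking $\pH^{|R|-|P|}$ commutes with $g^{\beta\alpha}_{\eta*}$ because the perversity hypotheses force all four of the pushed-forward complexes appearing in the construction to be perverse in the expected degree, so the perverse-truncation is detected before and after pushforward.

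\textbf{Main obstacle.} The only subtle point is bookkeeping: making sure that at every step of the six-part composition, both the $\scF$-side (on $Y$) and the $(g_\eta)_*\scF$-side (on $X$) sit inside a genuinely commutative diagram linked by proper base change isomorphisms associated to the appropriate square of the large diagram preceding~\eqref{eqn:morphism-composition}. Once the commuting squares are laid out, each individual commutation is one of: proper base change (functorial in the square), naturality of an adjunction unit, the projection formula, or functoriality of $\pH^i$. Assembling these naturality squares into one big commuting diagram and comparing the two extremal compositions gives the claim, with no new ideas beyond those in the proof of Lemma~\ref{lem:compat-proper} itself.
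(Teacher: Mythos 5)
Your proposal is correct and matches the paper's (implicit) approach: the paper states this lemma with only ``can be checked on definitions'' and a $\square$, and your argument supplies exactly the intended unwinding---three applications of Lemma~\ref{lem:compat-proper} for the existence assertions, followed by a step-by-step check that each constituent of the morphism~\eqref{eqn:commnc-compose} (the base-change identifications, the adjunction unit, the projection-formula/tensor isomorphism, the local-system map~\eqref{eqn:jordan-pullback} on the base, and the perverse truncation) commutes with pushforward. One minor notational slip: in the third application of Lemma~\ref{lem:compat-proper} you should take the proper morphism to be $g^\alpha: Y^\alpha \to X^\alpha$ over $\bA^Q$ (with the perverse sheaf $\Upsilon^\alpha_{fg}(\scF)$ on its generic part $Y^\alpha_\eta$), not ``$g^\alpha_\eta: Y^\alpha_\eta \to X^\alpha_\eta$'' as written, but the intent and the hypotheses you invoke are exactly right.
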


\subsection{Product-type situations}
\label{ss:product}

Let $P$ be a finite set, and suppose we have a collection of maps $(f_p: X_p \to \bA^1)_{p \in P}$.  For each $p$, let
\[
X_{p,\eta} = f_p^{-1}(\bA^1 \smallsetminus \{0\})
\qquad\text{and}\qquad
X_{p,0} = f_p^{-1}(\{0\}).
\]
Denote the inclusion maps by
$j_p: X_{p,\eta} \to X_p$ and
$i_p: X_{p,0} \to X_p$.
Set
\[
X = \prod_{p \in P} X_p \qquad\text{and}\qquad
f = \prod_{p \in P} f_p: X \to \bA^P.
\]
We obviously have $X_\eta = \prod_{p \in P} X_{p,\eta}$.  More generally, for any pointed map $\alpha: P_* \to Q_*$, we can describe $X^\alpha_\eta$ as follows:
\begin{equation}
\label{eqn:product-alpha}
X^\alpha_\eta \cong \prod_{q \in Q} X^\alpha_{q,\eta} \times \prod_{p \in \alpha^{-1}(*) \cap P} X_{p,0} \quad \text{where} \quad X^\alpha_{q,\eta} = \mathop{\prod\nolimits_{\bA^1}}\limits_{p \in \alpha^{-1}(q)} X_{p,\eta}.
\end{equation}
Here, the right-hand side is a fiber product over $\bA^1$.  If $\alpha^{-1}(q) = \varnothing$, the right-hand side should be understood to be $\bA^1$.

The following lemma is immediate from the definitions.

\begin{lem}\label{lem:product-calc1}
Let $(f_p: X_p \to \bA^1)_{p \in P}$ be as above. Suppose we have a collection of objects $\scF_p \in \Dbc(X_{p,\eta},\bk)$, and set
\[
\scF = \bigboxtimes_{p \in P} \scF_p \in \Dbc(X,\bk).
\]
Then the object $\bi_\alpha^*\bj_*\scF \in \Dbc(X^\alpha_\eta,\bk)$ is given by
\[
\bi_\alpha^*\bj_*\scF \cong
\bigboxtimes_{q \in Q} \left(\mathop{\bigboxtimes\nolimits_{\bA^1_\eta}}\limits_{p \in \alpha^{-1}(q)} \scF_p\right) \boxtimes
\bigboxtimes_{p \in \alpha^{-1}(*) \cap P} i_p^*j_{p*}\scF_p.
\]
\end{lem}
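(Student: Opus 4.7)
The plan is to identify both sides of the claimed isomorphism by decomposing the morphisms $\bj$ and $\bi_\alpha$ according to the product structure on $X$ and on $X^\alpha_\eta$ described in~\eqref{eqn:product-alpha}, and then invoking the compatibility of external tensor products with pullback and pushforward (i.e.\ the Künneth formula / base change applied one factor at a time).

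First I would observe that $\bj: X_\eta \to X$ factors as the product $\prod_{p \in P} j_p$, so that base change (applied inductively to each of the $|P|$ factors) yields
\[
\bj_*\scF \cong \bigboxtimes_{p \in P} j_{p*}\scF_p.
\]
Next I would decompose the map $\bi_\alpha: X^\alpha_\eta \to X$ according to the right-hand side of~\eqref{eqn:product-alpha}. The factor indexed by $p \in \alpha^{-1}(*) \cap P$ is simply the closed immersion $i_p: X_{p,0} \to X_p$, and pulling $j_{p*}\scF_p$ back along it produces the tensor factor $i_p^* j_{p*}\scF_p$. For $q \in Q$, the corresponding factor of $\bi_\alpha$ is the composition
\[
X^\alpha_{q,\eta} = \mathop{\prod\nolimits_{\bA^1}}\limits_{p \in \alpha^{-1}(q)} X_{p,\eta} \hookrightarrow \prod_{p \in \alpha^{-1}(q)} X_{p,\eta} \xrightarrow{\prod j_p} \prod_{p \in \alpha^{-1}(q)} X_p,
\]
where the first arrow is the closed immersion cut out by requiring the $\bA^1$-coordinates to coincide. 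Pulling $\bigboxtimes_{p \in \alpha^{-1}(q)} j_{p*}\scF_p$ back through this composition gives first $\boxtimes_p \scF_p$ on $\prod_p X_{p,\eta}$, and then its restriction to the fiber product over $\bA^1_\eta$ (the $\scF_p$ are already supported over $\bA^1_\eta$), which is precisely $\boxtimes_{\bA^1_\eta,\,p \in \alpha^{-1}(q)} \scF_p$.

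Assembling these factor-wise identifications, using that both $\bj_*$ and $\bi_\alpha^*$ commute with external products in this product-type setting, yields the claimed formula. There is no real obstacle here: each verification is a direct application of base change for a Cartesian square between products of schemes. The only point to remain attentive to is the bookkeeping that distinguishes between the three classes of indices (those $q$ with $\alpha^{-1}(q) \neq \varnothing$, those $q$ with $\alpha^{-1}(q) = \varnothing$ which contribute a trivial factor $\bA^1$, and those $p$ with $\alpha(p) = *$), but this matches directly the right-hand side of~\eqref{eqn:product-alpha}.
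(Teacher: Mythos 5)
The paper offers no proof of this lemma, declaring it ``immediate from the definitions''; your argument correctly spells out the two facts being invoked, namely the K\"unneth identity $\bj_*\scF \cong \bigboxtimes_p j_{p*}\scF_p$ (which follows factor-by-factor from smooth base change and the projection formula) and the compatibility of $\bi_\alpha^*$ with external products once $\bi_\alpha$ is decomposed factorwise via~\eqref{eqn:product-alpha}. This is exactly the reasoning the authors intend, so your proposal is correct and takes essentially the same approach.
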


Here, the notation ``$\boxtimes_{\bA^1_\eta}$'' is a relative external tensor product: it is the pullback of the usual external tensor product $\bigboxtimes\limits_{p \in \alpha^{-1}(q)} \scF_p$ along the map
\[
\mathop{\prod\nolimits_{\bA^1}}\limits_{p \in \alpha^{-1}(q)} X_{p,\eta} \hookrightarrow
\prod_{p \in \alpha^{-1}(q)} X_{p,\eta}.
\]
(When $\alpha^{-1}(q) = \varnothing$, this is the map $\bA^1 \to \Spec(\F)$, and $\bigboxtimes\limits_{p \in \alpha^{-1}(q)} \scF_p$ should be understood to be the constant sheaf $\underline{\bk}$ on $\Spec(\F)$.)

\begin{lem}
\label{lem:product-upexist}
Let $(f_p: X_p \to \bA^1)_{p \in P}$ and $\alpha: P_* \to Q_*$ be as above.  Suppose we have a collection of perverse sheaves $\scF_p \in \Perv(X_{p,\eta},\bk)$ that satisfy the following condition: for each $q \in Q$, the object
\[
\left(\mathop{\bigboxtimes\nolimits_{\bA^1_\eta}}\limits_{p \in \alpha^{-1}(q)} \scF_p\right)[1-|\alpha^{-1}(q)|] \in \Dbc(X^\alpha_{q,\eta},\bk)
\]
is perverse.  Then the $\alpha$-nearby cycles of $\scF$ are well defined, and we have
\[
\Upsilon^\alpha_f(\scF) \cong 
\bigboxtimes_{q \in Q} \left(\mathop{\bigboxtimes\nolimits_{\bA^1_\eta}}\limits_{p \in \alpha^{-1}(q)} \scF_p\right)[1- |\alpha^{-1}(q)|] \boxtimes
\bigboxtimes_{p \in \alpha^{-1}(*) \cap P} \Psi^{\mathrm{un}}_{f_p}(\scF_p).
\]
\end{lem}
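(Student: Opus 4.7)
The plan is to decompose the relevant complex into single-variable building blocks via Lemma~\ref{lem:product-calc1}, and then reduce each factor over $p \in \alpha^{-1}(*) \cap P$ to Be\u\i linson's one-dimensional stabilization recalled in Example~\ref{ex:Q-empty}.

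First, I would apply Lemma~\ref{lem:product-calc1} with $\scF$ replaced by $\scF \otimes f_\eta^* \scL_\ba = \bigboxtimes_{p \in P}(\scF_p \otimes f_{p,\eta}^* \scL_{\ba(p)})$. Since $\ba$ is $\alpha$-special, $\scL_{\ba(p)}$ is the trivial rank-one local system whenever $\alpha(p) \ne *$, so the decomposition collapses to
\[
\bi_\alpha^* \bj_*(\scF \otimes f_\eta^* \scL_\ba) \cong A \boxtimes B_\ba,
\]
where $A = \bigboxtimes_{q \in Q}\bigl(\mathop{\bigboxtimes\nolimits_{\bA^1_\eta}}\limits_{p \in \alpha^{-1}(q)} \scF_p\bigr)$ is independent of $\ba$ and $B_\ba = \bigboxtimes_{p \in \alpha^{-1}(*) \cap P} i_p^* j_{p*}(\scF_p \otimes f_{p,\eta}^* \scL_{\ba(p)})$.

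Next, I would analyze perverse degrees using the K\"unneth formula (t-exactness of external products in the \'etale setting). Writing $a = |\alpha^{-1}(*) \cap P|$, the hypothesis places each $q$-factor of $A$ in perverse degree $1 - |\alpha^{-1}(q)|$, so $A$ is concentrated in the single perverse degree $\sum_{q \in Q}(1 - |\alpha^{-1}(q)|) = |Q| - |P| + a$; meanwhile Be\u\i linson's description puts each factor of $B_\ba$ in perverse degrees $[-1, 0]$, so $B_\ba$ lies in $[-a, 0]$. Hence $A \boxtimes B_\ba$ lies in perverse degrees $[|Q|-|P|,\ |Q|-|P|+a]$, and K\"unneth yields
\[
\pH^{|Q|-|P|}(A \boxtimes B_\ba) \cong \bar A \boxtimes \bigboxtimes_{p \in \alpha^{-1}(*) \cap P} \pH^{-1}\bigl(i_p^* j_{p*}(\scF_p \otimes f_{p,\eta}^* \scL_{\ba(p)})\bigr),
\]
where $\bar A = \bigboxtimes_{q \in Q}\bigl(\mathop{\bigboxtimes\nolimits_{\bA^1_\eta}}\limits_{p \in \alpha^{-1}(q)} \scF_p\bigr)[1 - |\alpha^{-1}(q)|]$ reproduces the first factor of the claimed formula.

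Finally, I would verify the two conditions of Definition~\ref{def:main}. The isomorphism on $\pH^{|Q|-|P|}$ follows from Be\u\i linson's one-dimensional stabilization applied to each $p \in \alpha^{-1}(*) \cap P$: there exists $N_p$ such that $\pH^{-1}(i_p^*j_{p*}(\scF_p \otimes f_{p,\eta}^*\scL_{\ba(p)})) \cong \Psi^{\mathrm{un}}_{f_p}(\scF_p)$ once $\ba(p) \ge N_p$, with transition maps being isomorphisms; combining via K\"unneth delivers both the stabilization and the claimed identification of $\Upsilon^\alpha_f(\scF)$. For the vanishing in degrees $k > |Q|-|P|$, K\"unneth expresses the transition on $\pH^k(A \boxtimes B_\ba \to A \boxtimes B_\bb)$ as a direct sum indexed by tuples $(j_p) \in \{-1,0\}^{\alpha^{-1}(*) \cap P}$ with $\sum_p j_p = k - (|Q|-|P|+a) > -a$; each such tuple necessarily has at least one $j_p = 0$, and Be\u\i linson's vanishing of the $\pH^0$-transition on the corresponding factor (achieved by enlarging $\bb(p)$) annihilates every summand. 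The step needing most care is the perverse-degree bookkeeping, which works out only because the hypothesis was tailored so that each $q$-component of $A$ is concentrated in a single perverse degree, letting it pass transparently through the outer K\"unneth decomposition against $B_\ba$.
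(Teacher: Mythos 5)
Your proposal is correct and follows essentially the same approach as the paper's proof: decompose via Lemma~\ref{lem:product-calc1} into factors indexed by $q \in Q$ (which are constant in $\ba$ because $\ba$ is $\alpha$-special, and are shifted perverse sheaves by hypothesis) and factors indexed by $p \in \alpha^{-1}(*) \cap P$ (each of which is Be\u\i linson's complex for one-dimensional nearby cycles), then combine the stabilization and vanishing factor-by-factor. The paper states the degree bookkeeping rather tersely (``for $\ba$ and $\bb$ large enough, $\pH^i$ of the external tensor product is an isomorphism when $i=|Q|-|P|$ and zero otherwise''), whereas you spell out the K\"unneth decomposition across perverse degrees and make explicit why every summand of the transition map in degree $k > |Q|-|P|$ involves at least one $\pH^0$-factor, so the argument content is the same with somewhat more detail on your side.
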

\begin{proof}
Consider two $\alpha$-special functions $\ba \le \bb$.  By Lemma~\ref{lem:product-calc1}, the map
\[
\bi_\alpha^*\bj_*(\scF \otimes f_\eta^*\scL_\ba) \to \bi_\alpha^*\bj_*(\scF \otimes f_\eta^*\scL_\bb)
\]
is the external tensor product of the following two kinds of maps:
\begin{align}
&\text{for $q \in Q$}: & &\mathop{\bigboxtimes\nolimits_{\bA^1_\eta}}\limits_{p \in \alpha^{-1}(q)} (\scF_p \otimes f_{p,\eta}^*\scL_{\ba(p)} \to \scF_p \otimes f_{p,\eta}^*\scL_{\bb(p)}); \label{eqn:pcalc1} \\
&\text{for $p \in \alpha^{-1}(*) \cap P$}: & & i_p^*j_{p*}(\scF_p \otimes f_{p,\eta}^*\scL_{\ba(p)} \to \scF_p \otimes f_{p,\eta}^*\scL_{\bb(p)}). \label{eqn:pcalc2}
\end{align}
In~\eqref{eqn:pcalc1}, because $\ba$ and $\bb$ are $\alpha$-special, we have $\ba(p)= \bb(p) = 1$ for each $p$ that appears.  That is,~\eqref{eqn:pcalc1} is just the identity map of the object $\mathop{\bigboxtimes\nolimits_{\bA^1_\eta}}\limits_{p \in \alpha^{-1}(q)} \scF_p$, which is a shifted perverse sheaf by assumption.

The perverse cohomology of~\eqref{eqn:pcalc2} is precisely Be{\u\i}linson's description of the unipotent nearby cycles of $\scF_p$.  More precisely, for $\ba$ and $\bb$ large enough, the $i$-th perverse cohomology of the map in~\eqref{eqn:pcalc2} is an isomorphism if $i=-1$, and is $0$ otherwise.

We conclude that, for $\ba$ and $\bb$ large enough, the perverse cohomology $\pH^i$ of the external tensor product of all the maps~\eqref{eqn:pcalc1} and~\eqref{eqn:pcalc2} is an isomorphism when
\[
i = \sum_{q \in Q} (1 - |\alpha^{-1}(q)|) + \sum_{p \in \alpha^{-1}(*) \cap P} (-1) 
= |Q| - |P|,
\]
and zero otherwise.
\end{proof}

\begin{lem}
\label{lem:composition-product}
Let $(f_p: X_p \to \bA^1)_{p \in P}$ be as above, and let $\alpha: P_* \to Q_*$ and $\beta: Q_* \to R_*$ be pointed maps.  Suppose we have a collection of perverse sheaves $\scF_p \in \Perv(X_{p,\eta},\bk)$ satisfying the following three conditions:
\begin{enumerate}
\item for each $q \in Q$, the following object is perverse:
\[
\left(\mathop{\bigboxtimes\nolimits_{\bA^1_\eta}}\limits_{p \in \alpha^{-1}(q)} \scF_p\right)[1-|\alpha^{-1}(q)|] \in \Dbc(X^\alpha_{q,\eta},\bk);
\]
\item for each $r \in R$, the following object is perverse:
\[
\left(\mathop{\bigboxtimes\nolimits_{\bA^1_\eta}}\limits_{p \in \alpha^{-1}(\beta^{-1}(r))} \scF_p\right)[1-|\alpha^{-1}(\beta^{-1}(r))|] \in \Dbc(X^{\beta \alpha}_{r,\eta},\bk);
\]
\item
for any $p \in P$ such that $\alpha(p) \neq *$ and $(\beta \alpha)(p)=*$ we have $\Psi^{\mathrm{un}}_{f_p}(\scF_p)=\Psi_{f_p}(\scF_p)$.
\end{enumerate}
Then the $\alpha$-nearby cycles and the $\beta\alpha$-nearby cycles of $\scF$ are well defined, as well as the $\beta$-nearby cycles of $\Upsilon^\alpha_f(\scF)$, and the  map
$\Upsilon^{\beta \alpha}_f(\scF) \to \Upsilon^\beta_{f^\alpha}(\Upsilon_f^\alpha(\scF))$
from~\eqref{eqn:commnc-compose} is an isomorphism.
\end{lem}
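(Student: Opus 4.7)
The plan is to combine Lemma~\ref{lem:product-upexist} with a factorwise comparison, and then trace the construction of~\eqref{eqn:commnc-compose} explicitly in the product setting.

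First, condition (1) and Lemma~\ref{lem:product-upexist} yield the well-definedness and product formula for $\Upsilon^\alpha_f(\scF)$. Since $(\beta\alpha)^{-1}(r) = \alpha^{-1}(\beta^{-1}(r))$, condition (2) plays the analogous role for $\Upsilon^{\beta\alpha}_f(\scF)$. For $\Upsilon^\beta_{f^\alpha}(\Upsilon^\alpha_f(\scF))$ I use the decomposition $X^\alpha_\eta = \prod_{q \in Q} X^\alpha_{q,\eta} \times \prod_{p \in \alpha^{-1}(*) \cap P} X_{p,0}$ from~\eqref{eqn:product-alpha}, noting that $f^\alpha$ is the product of the maps $\tilde f_q : X^\alpha_q \to \bA^1$ given by the common value of $f_p$ for $p \in \alpha^{-1}(q)$, with the factors $X_{p,0}$ acting as spectators that can be absorbed into any one of the $X^\alpha_q$. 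Since $\Upsilon^\alpha_f(\scF)$ is already an external tensor product of sheaves $\scG_q = (\boxtimes^{\bA^1_\eta}_{p \in \alpha^{-1}(q)} \scF_p)[1 - |\alpha^{-1}(q)|]$ and copies of $\Psi^{\mathrm{un}}_{f_p}(\scF_p)$, a trivial extension of Lemma~\ref{lem:product-upexist} applies; its perversity hypothesis on the $r$-factor, namely that $(\boxtimes^{\bA^1_\eta}_{q \in \beta^{-1}(r)} \scG_q)[1 - |\beta^{-1}(r)|]$ be perverse, rewrites by associativity of $\boxtimes^{\bA^1_\eta}$ to precisely the assumption of condition (2).

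Both $\Upsilon^{\beta\alpha}_f(\scF)$ and $\Upsilon^\beta_{f^\alpha}(\Upsilon^\alpha_f(\scF))$ thus split as external tensor products, which I compare factor by factor. On factors indexed by $r \in R$ the identification is immediate via the same associativity, and on factors indexed by $p \in \alpha^{-1}(*) \cap P$ both sides are $\Psi^{\mathrm{un}}_{f_p}(\scF_p)$. The only substantive comparison is on factors indexed by $q \in \beta^{-1}(*) \cap Q$: the composition gives $\Psi^{\mathrm{un}}_{\tilde f_q}(\scG_q)$, whereas $\Upsilon^{\beta\alpha}_f(\scF)$ gives $\boxtimes_{p \in \alpha^{-1}(q)} \Psi^{\mathrm{un}}_{f_p}(\scF_p)$. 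These two objects agree via the Künneth formula for nearby cycles over $\bA^1$, which applies cleanly because condition (3) ensures that $\Psi^{\mathrm{un}}_{f_p}(\scF_p) = \Psi_{f_p}(\scF_p)$ for the relevant $p$, so that no non-unipotent monodromy intervenes.

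The hardest step is showing that the abstract morphism~\eqref{eqn:commnc-compose} coincides with this factorwise identification. I trace~\eqref{eqn:morphism-composition} in the product setting: for a large $\beta\alpha$-special $\bc$, the decomposition $\scL_\bc \cong \scL_\ba \otimes \scL_\bb$ is compatible with the external product structure, so each step in the construction (pullbacks, the adjunction units for $(\bj_{X,\alpha,1})_*$ and $(\bj_{X^\alpha,\beta,1})_*$, and tensoring with $(f^\alpha_\eta)^* \scL_{\bb'}$) decomposes as an external tensor product of factorwise operations. On the $r$- and $p$-factors the resulting map is an identity; on each $q$-factor with $q \in \beta^{-1}(*) \cap Q$, writing $\alpha^{-1}(q) = \{p_1,\ldots,p_k\}$, it is the map induced by the multiplication~\eqref{eqn:jordan-mult} $\scL_{\ba(p_1)} \otimes \cdots \otimes \scL_{\ba(p_k)} \to \scL_{\bb'(q)}$, composed with Be{\u\i}linson's description of $\Psi^{\mathrm{un}}$. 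In the colimit over $\bc$, the image of~\eqref{eqn:jordan-mult} eventually contains $\scL_{\max}$ by Lemma~\ref{lem:jordan-basic}\eqref{it:la-mult}, so by the same stabilization argument as in the proof of Lemma~\ref{lem:product-upexist} the induced map on perverse cohomology is an isomorphism. The remaining compatibility bookkeeping to confirm that this matches the iterated functor is mechanical.
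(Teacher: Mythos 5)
Your proposal is correct and follows essentially the same route as the paper: use Lemma~\ref{lem:product-upexist} (and a variant that allows the spectator factor $\prod_{p \in \alpha^{-1}(*) \cap P} X_{p,0}$) to obtain the external-product formulas and well-definedness, reduce the perversity hypothesis for $\Upsilon^\beta_{f^\alpha}(\scG)$ to condition (2) by associativity of the relative external product, and finish with the K\"unneth formula for nearby cycles (Illusie's Th\'eor\`eme~4.7 in~\cite{illusie-autour}) combined with condition (3). Your third paragraph, which traces~\eqref{eqn:morphism-composition} factor by factor and invokes Lemma~\ref{lem:jordan-basic}\eqref{it:la-mult} for the $q$-factors with $\beta(q)=*$, makes explicit an identification that the paper treats as implicit in the naturality of the construction; this is a reasonable addition, though the concluding ``bookkeeping is mechanical'' would need to be expanded for a fully self-contained argument.
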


\begin{proof}
Our assumptions together with Lemma~\ref{lem:product-upexist} imply that the $\alpha$-nearby cycles and the $\beta\alpha$-nearby cycles of $\scF$ are well defined.
To study the $\beta$-nearby cycles of $\Upsilon^\alpha_f(\scF)$, let us introduce the notation $Z = \prod_{p \in \alpha^{-1}(*) \cap P} X_{p,0}$, so that $X^\alpha_\eta = \prod_{q \in Q} X^\alpha_{q,\eta} \times Z$. We also let
\[
\scG_q = 
\left(\mathop{\bigboxtimes\nolimits_{\bA^1_\eta}}\limits_{p \in \alpha^{-1}(q)} \scF_p\right)[1- |\alpha^{-1}(q)|]
\qquad\text{and}\qquad
\scG_Z = \bigboxtimes_{p \in \alpha^{-1}(*) \cap P} \Psi_{f_p}^{\mathrm{un}}(\scF_p),
\]
so that if we set $\scG = \Upsilon^\alpha_f(\scF)$, then by Lemma~\ref{lem:product-upexist} we have
\[
\scG \cong \left(\bigboxtimes_{q \in Q} \scG_q \right)\boxtimes \scG_Z.
\]

 The diagram
\[
X^\alpha_\eta \xrightarrow{\bj_{X^\alpha}} X^\alpha \xleftarrow{\bi_{X^\alpha,\beta}} X^{\beta\alpha}_\eta
\]
can be redrawn as
\[
\prod_{q \in Q} X^\alpha_{q,\eta} \times Z \xrightarrow{\bj_{X^\alpha}} \prod_{q \in Q} X^\alpha_q \times Z \xleftarrow{\bi_{X^\alpha,\beta}} \prod_{r \in R} \left( \mathop{\prod\nolimits_{\bA^1}}\limits_{r \in \beta^{-1}(q)} X^\alpha_{q,\eta}\right) \times \prod_{q \in \beta^{-1}(*) \cap Q} X^\alpha_{q,0} \times Z.
\]
This almost matches the general set-up at the beginning of this subsection, except for the extra factor of $Z$.  A minor variant of Lemma~\ref{lem:product-upexist} says that a sufficient condition for the $\beta$-nearby cycles of $\scG$ to be well defined is that for each $r \in R$ the object
\begin{equation}\label{eqn:beta-cond}
\left(\mathop{\bigboxtimes\nolimits_{\bA^1_\eta}}\limits_{q \in \beta^{-1}(r)} \scG_q\right)[1-|\beta^{-1}(r)|] 
\end{equation}
be perverse.  If this holds, then we have
\begin{equation}\label{eqn:beta-calc}
\Upsilon^\beta_{f^\alpha}(\scG) \cong 
\bigboxtimes_{r \in R} \left(\mathop{\bigboxtimes\nolimits_{\bA^1_\eta}}\limits_{q \in \beta^{-1}(r)} \scG_q\right)[1- |\beta^{-1}(r)|] \boxtimes
\bigboxtimes_{q \in \beta^{-1}(*) \cap Q} \Psi^{\mathrm{un}}_{f^\alpha_q}(\scG_q) \boxtimes \scG_Z.
\end{equation}

Using the definition of $\scG_q$, we rewrite the object in~\eqref{eqn:beta-cond} as
\begin{multline*}
\left(\mathop{\bigboxtimes\nolimits_{\bA^1_\eta}}\limits_{\substack{q \in \beta^{-1}(r)\\ p \in \alpha^{-1}(q)}} \scF_p\right)
\left[ 1 - |\beta^{-1}(r)| + \sum_{q \in \beta^{-1}(r)} (1 - |\alpha^{-1}(q)|) \right] \\
\cong
\left(\mathop{\bigboxtimes\nolimits_{\bA^1_\eta}}\limits_{p \in \alpha^{-1}(\beta^{-1}(r))} \scF_p\right)[1-|\alpha^{-1}(\beta^{-1}(r))|].
\end{multline*}
This object is perverse by the second assumption in the lemma.  We conclude that the $\beta$-nearby cycles of $\scG$ are well defined.  Moreover, using the definition of $\scG_q$, we can rewrite~\eqref{eqn:beta-calc} as
\[
\bigboxtimes_{r \in R}\left(\mathop{\bigboxtimes\nolimits_{\bA^1_\eta}}\limits_{\substack{q \in \beta^{-1}(r)\\ p \in \alpha^{-1}(q)}} \scF_p\right)[1-|\alpha^{-1}(\beta^{-1}(r))|] \boxtimes 
\bigboxtimes_{q \in \beta^{-1}(*) \cap Q} \Psi^{\mathrm{un}}_{f^\alpha_q}(\scG_q) \boxtimes \bigboxtimes_{p \in \alpha^{-1}(*) \cap P} \Psi^{\mathrm{un}}_{f_p}(\scF_p).
\]
To finish the comparison
with $\Upsilon^{\beta \alpha}_f(\scF)$, we must show that if $\beta(q) = *$, then
\[
\Psi^{\mathrm{un}}_{f_q^\alpha}(\scG_q) \cong \bigboxtimes_{p \in \alpha^{-1}(q)} \Psi^{\mathrm{un}}_{f_p}(\scF_p).
\]
This claim follows from the definition of $\scG_q$, our third assumption, and the compatibility of nearby cycles with external tensor products, see~\cite[Th\'eor\`eme~4.7]{illusie-autour}.
\end{proof}

\section{Application to central sheaves}
\label{sec:app}

In this section we assume (for simplicity) that $\F$ is algebraically closed, and denote by $\Alg_\F$ the category of $\F$-algebras.

\subsection{Graphs of points}

We consider the curve $C=\mathbb{A}^1_\F$ and the closed point $0 \in C(\F)$.
Given $R \in \Alg_\F$ and $y \in C(R)$, we denote by $\Gamma_y \subset C_R := C \times_{\Spec(\F)} \Spec(R)$ the graph of $y$. The constant $R$-point defined by $0 \in C(\F)$ will also be denoted $0$. If $P$ is a finite set,
for $R \in \Alg_\F$ and $(y_p)_{p \in P} \in C^P(R)$ we set
\[
\Gamma_{\{y_p : p \in P\}} = \bigcup_{p \in P} \Gamma_{y_p} \quad \subset C_R.
\]
Of course, for any subset $Q \subset P$ we have a closed immersion
$\Gamma_{\{y_p : p \in Q\}} \to \Gamma_{\{y_p : p \in P\}}$.
We will also denote by $\hGamma_{\{y_p : p \in P\}}$ the completion of $C_R$ along $\Gamma_{\{y_p : p \in P\}}$ (i.e.~the spectrum of the completion of $\scO(C_R)$ with respect to the ideal of definition of the closed subscheme $\Gamma_{\{y_p : p \in P\}}$). We have a natural morphism
$\hGamma_{\{y_p : p \in P\}} \to C_R$,
and the closed immersion $\Gamma_{\{y_p : p \in P\}} \to C_R$ factors through a closed immersion
$\Gamma_{\{y_p : p \in P\}} \to \hGamma_{\{y_p : p \in P\}}$.
We set
\[
\hGamma_{\{y_p : p \in P\}}^\circ := \hGamma_{\{y_p : p \in P\}} \smallsetminus \Gamma_{\{y_p : p \in P\}}.
\]

\subsection{Satake category and central sheaves}
\label{ss:Satake-category}

Let $G$ be a connected reductive algebraic group over $\F$.
To $G$ and a choice of Borel subgroup $B \subset G$ we can associate in the usual way the loop group $LG$, the positive loop group $L^+G$, the Iwahori subgroup $I \subset L^+G$, the affine Grassmannian $\Gr_G=LG/L^+G$ and the affine flag variety $\Fl_G=LG/I$. Here the quotients are the fppf quotients, and they are represented by ind-projective ind-schemes over $\F$; for all of this, see~\cite{arbook} for details. Recall that the $L^+G$-equivariant derived category $\Db_{L^+G}(\Gr_G,\bk)$ of $\bk$-sheaves on $\Gr_G$, resp.~the $I$-equivariant derived category $\Db_{I}(\Fl_G,\bk)$ of $\bk$-sheaves on $\Fl_G$, is endowed with a canonical unital and associative convolution product $\star^{L^+G}$, resp.~$\star^I$.

The Satake category is the category $\Perv_{L^+G}(\Gr_G,\bk)$ of $L^+G$-equivariant $\bk$-perverse sheaves on $\Gr_G$. 
It is a fundamental standard fact (see~\cite{mv,br}) that the product $\star^{L^+G}$ is t-exact on both sides, hence restricts to a bifunctor on the Satake category, and moreover that this restriction admits a canonical commutativity constraint. For a finite collection $(\scA_p)_{p \in P}$ of objects in $\Perv_{L^+G}(\Gr_G,\bk)$, it therefore makes sense to consider the convolution product $\bigstar_{p \in P}^{L^+G} \scA_p$.

We will denote by $\cG$ the smooth affine group scheme over $C$ constructed (following X.~Zhu) in~\cite[\S 2.2.3.1]{arbook}: its restriction to $C \smallsetminus \{0\}$, resp.~to the formal neighborhood of $0$, identifies with $G \times (C \smallsetminus \{0\})$, resp.~with the Iwahori group scheme of $LG$ attached to $B$. For any scheme $X$ over $C$, we will denote by $\cE^0_X = X \times_C \cG$ the trivial principal $\cG$-bundle over $X$. 
 Recall the ind-scheme $\bGr^{\mathrm{Cen}}_\cG$ over $C$ defined in~\cite[\S 2.2.3.2]{arbook}; it represents the functor sending $R \in \Alg_\F$ to the set of equivalence classes of triples $(y,\cE,\beta)$ where:
\begin{itemize}
\item
$y \in C(R)$;
\item
$\cE$ is a principal $\cG$-bundle over $\hGamma_y$;
\item
$\beta : \cE_{|\hGamma_y^\circ} \simto \cE^0_{\hGamma_y^\circ}$ is an isomorphism.
\end{itemize}
We have canonical identifications
\begin{equation}
\label{eqn:bGr-Cen-fibers}
\bGr^{\mathrm{Cen}}_\cG{}_{|\{0\}} \cong \Fl_G, \quad
\bGr^{\mathrm{Cen}}_\cG{}_{|C \smallsetminus \{0\}} \cong \Gr_G \times (C \smallsetminus \{0\}).
\end{equation}
Following Gaitsgory~\cite{gaitsgory}, we consider the functor
\[
 \sfZ : \Perv_{L^+G}(\Gr_G,\bk) \to \Perv_I(\Fl_G,\bk)
\]
defined by
$\sfZ(\scA)= \Upsilon_{\bGr^{\mathrm{Cen}}_\cG}(\scA \boxtimes \underline{\bk}_{C \smallsetminus \{0\}} [1])$.
In fact, in this setting it is known that the nearby cycles of $\scA \boxtimes \underline{\bk}_{C \smallsetminus \{0\}} [1]$ are unipotent (see~\cite[\S 2.4.5]{arbook}), so that $\sfZ(\scA)$ coincides with the full nearby cycles, see Example~\ref{ex:Q-empty}. It is known that this functor is monoidal when seen as a functor with values in $\Db_I(\Fl_G,\bk)$ (see~\cite[Theorem~3.4.1]{arbook}), that for any $\scF$ in $\Perv_I(\Fl_G,\bk)$ and $\scA$ in $\Perv_{L^+G}(\Gr_G,\bk)$ the convolution $\scF \star^I \sfZ(\scA)$ is perverse (see~\cite[Corollary~3.2.5]{arbook}), and that $\sfZ$ is a central functor; in particular, for $\scF$, $\scA$ as above there exists a canonical isomorphism $\scF \star^I \sfZ(\scA) \cong \sfZ(\scA) \star^I \scF$, see~\cite[Theorem~3.2.3 and~\S 3.5.1]{arbook}. In particular, for a finite collection $(\scA_p)_{p \in P}$ of objects in $\Perv_{L^+G}(\Gr_G,\bk)$, it makes sense to consider the convolution product $\bigstar_{p \in P}^{I} \sfZ(\scA_p)$.

\subsection{Iterated affine Grassmannians}

Let $P$ be a finite set.  Define a functor $\bGr_P$ on $\Alg_\F$ as follows: for $R \in \Alg_\F$, $\bGr_P(R)$ is the set of equivalence classes of the following data:
\begin{itemize}
\item a point $(y_p)_{p \in P}$ in $C^P(R)$;
\item
a principal $\cG$-bundle $\cE$ over $\hGamma_{\{0\} \cup \{y_p: p \in P\}}$;
\item
an isomorphism $\beta : \cE_{| \hGamma^\circ_{\{0\} \cup \{y_p: p \in P\}}} \simto \cE^0_{\hGamma_{\{0\} \cup \{y_p: p \in P\}}^\circ}$.
\end{itemize}
This functor is represented by an ind-proper ind-scheme over $C^P$. It is also easily seen that if $Q$ is another finite set and $\alpha : P_* \to Q_*$ is a surjective pointed map, there is a canonical identification $\bA^Q \times_{\bA^P} \bGr_P = \bGr_Q$. 

\begin{ex}
For $n \in \Z_{\geq 1}$ and $P=\{1, \ldots, n\}$, the ind-scheme $\bGr_{\{1, \ldots, n\}}$ coincides with the ind-scheme $\bGr_n$ of~\cite[\S 5.1]{bbfk}. If $P=\varnothing$ we have $\bGr_\varnothing=\Fl_G$.
\end{ex}

Denote by
\[
C^{P,\dag} \subset C^P
\]
the open subscheme consisting of the points $(y_p)_{p \in P}$ such that $y_p \neq 0$ for any $p$ and $y_p \neq y_{p'}$ for any $p \ne p'$.  By standard arguments we have a canonical identification
\begin{equation}
\label{eqn:identification-GrP-generic}
(\bGr_P)_{|C^{P,\dag}} \cong \Fl_G \times (\Gr_G)^P \times C^{P,\dag}.
\end{equation}
Denote by $\jmath_P$ the open embedding
\[
(\bGr_P)_{|C^{P,\dag}} \to (\bGr_P)_{|(C \smallsetminus \{0\})^P} = (\bGr_P)_\eta.
\]

Below we will consider collections of perverse sheaves $\scA_* \in \Perv_I(\Fl_G,\bk)$ and $\scA_p \in \Perv_{L^+G}(\Gr_G,\bk)$ for each $p \in P$. For brevity, we denote this collection by
$(\scA_i)_{i \in P_*}$.
We consider the functor
\[
\sfC_P : \Perv_I(\Fl_G,\bk) \times \prod_{p \in P} \Perv_{L^+G}(\Gr_G,\bk) \to \Perv((\bGr_P)_\eta)
\]
defined by
\[
\sfC_P((\scA_i)_{i \in P_*}) = (\jmath_P)_{!*}\left(\scA_* \boxtimes \left( \bigboxtimes_{p \in P} \scA_p \right) \boxtimes \underline{\bk}_{C^{P,\dag}}[|P|]\right),
\]
where we use the identification~\eqref{eqn:identification-GrP-generic}.

The main result of this section is the following statement, proved in~\S\ref{ss:proof-new}.  The statement involves 
the extension of the constructions of Section~\ref{sec:higher-nc} to ind-schemes of ind-finite type, see Remark~\ref{rmk:def-Upsilon}\eqref{it:ind-schemes}.

\begin{thm}
\label{thm:nc-main-new}
Let $\alpha: P_* \to Q_*$ be a surjective pointed map.  For any $\scA_*$ in $\Perv_I(\Fl_G,\bk)$ and $(\scA_p)_{p \in P}$ in $\Perv_{L^+G}(\Gr_G,\bk)$, the $\alpha$-nearby cycles of $\sfC_P((\scA_i)_{i \in P_*})$ are well defined, and moreover we have a canonical identification
\[
\Upsilon^\alpha_{\bGr_P} \bigl( \sfC_P((\scA_i)_{i \in P_*}) \bigr)
\cong \sfC_Q \bigl( (\scB_j)_{j \in Q_*} \bigr)
\]
where
\[
\scB_* = \scA_* \star^I \left(\bigstar_{p \in \alpha^{-1}(*) \cap P}^I \sfZ(\scA_p)\right)
\qquad\text{and}\qquad
\scB_q = \bigstar_{p \in \alpha^{-1}(q)}^{L^+G} \scA_p
\quad\text{for $q \in Q$.}
\]
If $\beta: Q_* \to R_*$ is another surjective pointed map, then the natural map
\[
\Upsilon^{\beta\alpha}_{\bGr_P} \bigl( \sfC_P((\scA_i)_{i \in P_*}) \bigr) \to \Upsilon^\beta_{\bGr_Q}\Upsilon^\alpha_{\bGr_P} \bigl( \sfC_P((\scA_i)_{i \in P_*}) \bigr)
\]
given by~\eqref{eqn:commnc-compose}
is an isomorphism.
\end{thm}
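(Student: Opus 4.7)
The strategy is to reduce the computation to the product-type setting of \S\ref{ss:product} by realizing $\sfC_P((\scA_i)_{i \in P_*})$ as a proper pushforward from a ``convolution Grassmannian'' $\widetilde{\bGr}_P$. Concretely, for a chosen ordering of $P$, I would let $\widetilde{\bGr}_P$ parametrize chains of $\cG$-bundles on $\hGamma_{\{0\} \cup \{y_p : p \in P\}}$ with modifications localized successively at $0$ and at the graphs $\Gamma_{y_p}$; this ind-scheme comes equipped with a proper multiplication morphism $m_P : \widetilde{\bGr}_P \to \bGr_P$ which restricts to an isomorphism over $C^{P,\dag}$. An appropriate twisted external product $\widetilde{\sfC}_P((\scA_i)_{i \in P_*})$ on $\widetilde{\bGr}_P$, defined using the $L^+G$- and $I$-equivariant structures of the inputs, then satisfies $(m_P)_* \widetilde{\sfC}_P((\scA_i)_{i \in P_*}) \cong \sfC_P((\scA_i)_{i \in P_*})$, essentially because $m_P$ is small over $C^{P,\dag}$ and the IC extension from this open commutes with proper pushforward.

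On $\widetilde{\bGr}_P$ the calculation becomes accessible to Lemma~\ref{lem:product-upexist}, with the role of the ``$X_p$'' played by the individual global Grassmannians at $0$ and at each point $y_p$. The requisite perversity of the intermediate fibered external products follows from the t-exactness of $\star^{L^+G}$ on the Satake category, and from the perversity of $\scF \star^I \sfZ(\scA)$ recalled in~\S\ref{ss:Satake-category}. Lemma~\ref{lem:product-upexist} then gives a description of $\Upsilon^\alpha_{\widetilde{\bGr}_P} \widetilde{\sfC}_P((\scA_i)_{i \in P_*})$ as an external product whose factors consist of fibered external products of the $\scA_p$ (for $p$ in a given fiber $\alpha^{-1}(q)$ with $q \in Q$) together with the central sheaves $\sfZ(\scA_p)$ for $p \in \alpha^{-1}(*) \cap P$.

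Invoking Lemma~\ref{lem:compat-proper} with $g = m_P$ (and its base change along the closed stratum) transfers this description down to $\bGr_P$. The hypotheses of the lemma are verified because the targets $\sfC_Q((\scB_j)_{j \in Q_*})$ are IC extensions of shifted local systems from the open locus, and the direct images of the product-type formulae from the preceding step are perverse thanks to the semi-smallness properties of $m_P^\alpha$ over the relevant strata and to the already-cited perversity of $\scF \star^I \sfZ(\scA)$. The pushed-down factors are then identified with $\scA_* \star^I \bigstar^I \sfZ(\scA_p)$ and $\bigstar^{L^+G} \scA_p$ via the standard realization of these convolution operations as proper pushforwards from the appropriate convolution diagrams.

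For the composition statement, Lemma~\ref{lem:compat-compose-proper} applied to $m_P$ reduces the claim to the corresponding statement on $\widetilde{\bGr}_P$, which is precisely Lemma~\ref{lem:composition-product}. The third hypothesis of the latter (the equality $\Psi^{\mathrm{un}}_{f_p}(\scA_p) = \Psi_{f_p}(\scA_p)$ on the relevant factors) is supplied by the unipotence of the full nearby cycles on $\bGr^{\mathrm{Cen}}_\cG$ for $L^+G$-equivariant inputs, as recalled in~\S\ref{ss:Satake-category}. The main obstacle throughout is the careful bookkeeping needed to match the combinatorics of $\alpha$ (the partition of $P$ into the fibers over points of $Q$ together with $\alpha^{-1}(*)$) with the recursive structure of $\widetilde{\bGr}_P$, and to check perversity at every intermediate step; once the convolution Grassmannian is properly set up, the identifications are essentially forced by the product-type and proper pushforward compatibilities already established in Section~\ref{sec:higher-nc}.
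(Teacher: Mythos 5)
Your strategy is broadly the same as the paper's (reduce to a convolution Grassmannian, compute there via the product-type results, and push forward properly), and you correctly identify the key lemmas: Lemma~\ref{lem:product-upexist}, Lemma~\ref{lem:compat-proper}, Lemma~\ref{lem:composition-product}, and Lemma~\ref{lem:compat-compose-proper}. You also correctly identify the special case of Lemma~\ref{lem:conv-calc2} (your $(m_P)_*\widetilde{\sfC}_P \cong \sfC_P$) as the mechanism for descent, and the unipotence needed for the third hypothesis of Lemma~\ref{lem:composition-product}.

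However, there is a real gap in the middle of your argument: you apply Lemma~\ref{lem:product-upexist} (and likewise Lemma~\ref{lem:composition-product}) directly to $\widetilde{\bGr}_P$, which is a \emph{twisted} product (in the paper's notation, $\ctGr_P^{P,\varnothing}$, a chain of modifications $LG \times^{I} LG \times^{I} \cdots$); this does not fit the hypotheses of either lemma, which require a morphism $X \to \bA^P$ that literally decomposes as a product $\prod_p X_p \to \bA^P$ of one-variable pieces. The twisted external product $\tsfC^P_{\mathrm{id}_P}$ is not a $\boxtimes$ in this sense. The paper bridges this gap by first applying the product-type lemmas on the \emph{untwisted} product $\ctGr_P^{\varnothing,\varnothing} \cong \Fl_G \times \prod_{p} \bGr^{\mathrm{Cen}}_\cG$ (see~\eqref{eqn:ctGr-product}), and only then transferring to the twisted product via the $\cL^+_Q\cG$-torsor maps $q$ and $p$ of the convolution diagram~\eqref{eqn:convdiag-pre}, using Lemma~\ref{lem:compat-smooth} twice (and similarly Lemma~\ref{lem:compat-compose-smooth} for the composition statement). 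Without this smooth-descent step your application of the product lemmas is unjustified, so you should insert it: work first on $\ctGr_P^{\varnothing,\varnothing}$, then descend, then push forward along $\mu_{\mathrm{id}_P}$ and base change.

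One smaller remark: to verify the perversity hypotheses of Lemma~\ref{lem:compat-proper}, you invoke ``semi-smallness of $m_P^\alpha$'' but the paper's Lemma~\ref{lem:conv-calc2} instead proves directly that the pushforward equals the relevant $\sfC_Q$ (hence is perverse) by the characterization of IC extensions, with the t-exactness of convolution on the Satake category doing the work. Your version is morally equivalent (semi-smallness is the source of that t-exactness), but the paper's formulation is what actually pins down the identification of the pushforward, not merely its perversity, which is needed for the ``canonical identification'' claim of the theorem.
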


\begin{rmk}
 As was explained to us by A. Salmon, Theorem~\ref{thm:nc-main-new} can be restated as the construction of a category cofibered over the category of finite pointed sets (and pointed maps).
\end{rmk}

\begin{rmk}
\label{rmk:thm-special}
In this remark we assume that $P=\{1, \ldots, n\}$ for some $n \in \Z_{\geq 1}$ and $Q=\varnothing$. In this case there is a unique choice for $\alpha$, we have $\bGr_Q=\Fl_G$, and Theorem~\ref{thm:nc-main-new} says that
\begin{equation}
\label{eqn:nearby-cycles-Gr}
\Upsilon^\alpha_{\bGr_P}(\scA_*, \scA_1, \ldots, \scA_n) \cong \scA_* \star^I \sfZ(\scA_1) \star^I \cdots \star^I \sfZ(\scA_n).
\end{equation}
\begin{enumerate}
\item
If $n=1$, by Example~\ref{ex:Q-empty} the fact that the nearby cycles are well defined is automatic; the isomorphism~\eqref{eqn:nearby-cycles-Gr} is the content of~\cite[Proposition~3.2.1]{arbook}. In case $n=2$, this statement is closely related to the results of~\cite[\S 3.5]{arbook}.\label{it:thm-special-1}
\item
There exists a natural action of the symmetric group $\mathfrak{S}_n$ on $\bGr_P$ by permutation of the points $y_i$. This action preserves the preimage of $C^{P,\dag}$ and, under the identification~\eqref{eqn:identification-GrP-generic}, its restriction to this open subset identifies with the diagonal action by permutation of the factors in $(\Gr_G)^n$ and $C^{P,\dag}$. It also preserves the preimage of $(0, \ldots,0)$ and restricts to the identity on this preimage. For any $\sigma \in \mathfrak{S}_n$ we deduce a canonical isomorphism between $\Upsilon_{\bGr_P}(\sfC_P(\scA_*, \scA_1, \ldots, \scA_n))$ and the similar object obtained by permutation of the $\scA_i$'s. Using the same techniques as in~\cite[\S 3.5.8]{arbook} one can check that, under~\eqref{eqn:nearby-cycles-Gr}, this isomorphism is induced by the ``centrality'' isomorphism for the functor $\sfZ$ (see~\cite[Theorem~3.2.3]{arbook}) or, equivalently (by~\cite{gaitsgory-app}, see~\cite[Theorem~3.5.1]{arbook}), by the commutativity constraint on the Satake category.
\end{enumerate}
\end{rmk}

\subsection{Convolution--torsor affine Grassmannians}
\label{ss:conv-torsor}

We now introduce some auxiliary ind-schemes needed for the proof of Theorem~\ref{thm:nc-main-new}.  In this section, we 
assume that $P_*$ and $Q_*$ are equipped with total orders such that $*$ is the smallest element, and 
that
$\alpha: P_* \to Q_*$
is a surjective, order-preserving pointed map.  
We set
\[
\min(\alpha^{-1}) := \{ i \in P \mid i = \min(\alpha^{-1}(\alpha(i))) \}, \quad \nmin(\alpha^{-1}) := P_* \smallsetminus \min(\alpha^{-1}).
\]
For $i$ in $P$ or $Q$, we will denote by $i-1$ the predecessor of $i$.

Let $\bc$ and $\bt$ be two subsets of $P$ such that
$\bc \cap \bt = \varnothing$.
We will call $\bc$ the ``convolution locus,'' and $\bt$ the ``torsor locus.''  (These terms will be justified below.). Define a functor $\ctGr_\alpha^{\bc,\bt}$ as follows: for $R \in \Alg_\F$, $\ctGr_\alpha^{\bc,\bt}(R)$ is the set of equivalence classes of the following data:
\begin{itemize}
\item a point $(y_q)_{q \in Q}$ in $C^Q(R)$;
\item
for $i \in P_*$, a
principal $\cG$-bundle $\cE^i$ over $\hGamma_{\{0\} \cup \{y_q: q \in Q\}}$;
\item 
for $i \in P_* \smallsetminus \bc$, 
an isomorphism 
\[
\beta^i : \cE^i_{| \hGamma_{\{0\} \cup \{y_q: q \in Q\}} \smallsetminus \Gamma_{y_{\alpha(i)}}} \simto \cE^0_{\hGamma_{\{0\} \cup \{y_q: q \in Q\}} \smallsetminus \Gamma_{y_{\alpha(i)}}}.
\]
\item 
for $i \in \bc$, an isomorphism $\beta^i : \cE^i_{| \hGamma_{\{0\} \cup \{y_q: q \in Q\}} \smallsetminus \Gamma_{y_{\alpha(i)}}} \simto \cE^{i-1}_{|\hGamma_{\{0\} \cup \{y_q: q \in Q\}} \smallsetminus \Gamma_{y_{\alpha(i)}}}$;
\item 
for $i \in \bt$, an isomorphism $\gamma^i : \cE^{i-1} \simto \cE^0_{\hGamma_{\{0\} \cup \{y_q: q \in Q\}}}$.
\end{itemize}
In this definition,
if $\alpha(i) = *$, then ``$y_{\alpha(i)}$'' should be taken to mean the point $0 \in C(R)$.
In the special case where $\alpha$ is the identity map, we may write $\ctGr_P^{\bc,\bt}$ instead of $\ctGr_\alpha^{\bc,\bt}$. Using standard arguments (see e.g.~\cite[Proposition~2.3.11]{arbook}) one can show that $\ctGr_\alpha^{\bc,\bt}$ is represented by an ind-scheme over $C^Q$, which is moreover ind-proper if $\bt=\varnothing$.

\begin{ex}
For $n \in \Z_{\geq 1}$ and $P=\{1, \ldots, n\}$, the ind-scheme $\ctGr_{\{1, \ldots, n\}}^{\{1, \ldots, n\},\varnothing}$ coincides with the ind-scheme $\tbGr_n$ of~\cite[\S 5.1]{bbfk}.
\end{ex}

If $\bt' \subset \bt$, there is an obvious map
\begin{equation}
\label{eqn:def-q}
q: \ctGr_\alpha^{\bc,\bt} \to \ctGr_\alpha^{\bc,\bt \smallsetminus \bt'}
\end{equation}
given by forgetting the $\gamma^i$'s with $i \in \bt'$.  There is also a ``twisting map''
\begin{equation}
\label{eqn:def-p}
p: \ctGr_\alpha^{\bc,\bt} \to \ctGr_\alpha^{\bc \cup \bt', \bt \smallsetminus \bt'}
\end{equation}
that is defined on $R$-points as follows: for each $j \in \bt'$, replace $\beta^j$ by the composition
\[
\cE^j_{| \hGamma_{\{0\} \cup \{y_q: q \in Q\}} \smallsetminus \Gamma_{y_{\alpha(j)}}} \xrightarrow{\beta^j} \cE^0_{\hGamma_{\{0\} \cup \{y_q: q \in Q\}} \smallsetminus \Gamma_{y_{\alpha(j)}}} \xrightarrow{(\gamma^j)^{-1}} \cE^{j-1}_{| \hGamma_{\{0\} \cup \{y_q: q \in Q\}} \smallsetminus \Gamma_{y_{\alpha(j)}}} ,
\]
and then forget $\gamma^j$.  

Let us describe this ind-scheme (or its generic part) in some special cases.  First, when $\bc = \bt = \varnothing$, we have
\begin{equation}\label{eqn:ctGr-product}
\ctGr_\alpha^{\varnothing,\varnothing} \cong
\underbrace{\Fl_G \times \cdots \times \Fl_G}_{\text{$|\alpha^{-1}(*)|$ copies}} \times
\prod_{j \in Q} (\underbrace{\bGr^{\mathrm{Cen}}_\cG \times_C \cdots \times_C \bGr^{\mathrm{Cen}}_\cG)}_{\text{$|\alpha^{-1}(j)|$ copies}}.
\end{equation}
In particular, its generic part is
\begin{equation}\label{eqn:ctGr-gen}
(\ctGr_\alpha^{\varnothing,\varnothing})_\eta
\cong
\underbrace{\Fl_G \times \cdots \times \Fl_G}_{\text{$|\alpha^{-1}(*)|$ copies}}
\times
\underbrace{\Gr_G \times \cdots \times \Gr_G}_{\text{$|\alpha^{-1}(Q)|$ copies}}
\times (C \smallsetminus \{0\})^Q.
\end{equation}
Next, suppose $\bc=\nmin(\alpha^{-1})$.
We have
\begin{multline}
\label{eqn:ctGr-conv}
(\ctGr_\alpha^{\nmin(\alpha^{-1}),\varnothing})_\eta
\cong
\underbrace{LG \times^I LG \times^I \cdots \times^I \Fl_G}_{\text{$|\alpha^{-1}(*)|$ factors}}
\times \\
\prod_{j \in Q}
\underbrace{LG \times^{L^+G} LG \times^{L^+G} \cdots \times^{L^+G} \Gr_G}_{\text{$|\alpha^{-1}(j)|$ factors}} \times (C \smallsetminus \{0\})^Q.
\end{multline}
More generally, the previous description remains valid over $C^{Q,\dag}$ for any $\bc$ containing $\nmin(\alpha^{-1})$:
\begin{equation}
\label{eqn:ctGr-conv-dag}
(\ctGr_\alpha^{\bc,\varnothing})_{|C^{Q,\dag}}
\cong
(\ctGr_\alpha^{\nmin(\alpha^{-1}),\varnothing})_{|C^{Q,\dag}}
\qquad
\text{if $\bc \supset \nmin(\alpha^{-1})$.}
\end{equation}
However, over a point $(y_q)_{q \in Q} \notin C^{Q,\dag}$, the fiber of $\ctGr_\alpha^{\bc,\varnothing}$ may differ from~\eqref{eqn:ctGr-conv} in the following way: some instances of ``$\Gr_G \times ({-})$'' are replaced by ``$LG \times^{L^+G} ({-})$,'' depending on $\bc$ and on the coincidences among the $y_j$'s. 

We now explain why $\bt$ is called the ``torsor locus.''  Define the pro-smooth group scheme $\cL_Q^+ \cG$ over $C^Q$ which represents the functor on $\Alg_\F$ such that $(\cL_Q^+ \cG)(R)$ consists of the tuples $((y_q)_{q \in Q}, g)$ with $(y_q)_{q \in Q} \in C^Q(R)$ and $g \in \cG(\hGamma_{\{0\} \cup \{y_q : q \in Q\}})$. (The representability of this group scheme can be proved as in~\cite[\S 3.5.2]{arbook}.) The following lemma follows from standard arguments (see e.g.~\cite[Lemma~2.3.9]{arbook}).

\begin{lem}\label{lem:ctGr-torsor}
The maps~\eqref{eqn:def-q} and~\eqref{eqn:def-p}
are both principal bundles (with respect to different actions) for the group scheme
\[
\mathop{\prod\nolimits_{C^Q}}\limits_{i \in \bt'} \cL^+_Q\cG.
\]
\end{lem}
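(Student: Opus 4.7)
The plan is to treat both maps in parallel, exhibiting in each case a natural action of $\mathop{\prod\nolimits_{C^Q}}_{i \in \bt'} \cL^+_Q\cG$ on the source that is fiberwise simply transitive, and then checking local triviality by a standard descent argument. I will describe the action on the additional data carried by the source (relative to the target), check equivariance with the forgetful/twisting map, and check that the action is free and transitive on geometric fibers.

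For the map $q$ of~\eqref{eqn:def-q}, the only extra data in the source is the collection of trivializations $\gamma^i : \cE^{i-1} \simto \cE^0$ for $i \in \bt'$, defined over the whole of $\hGamma_{\{0\} \cup \{y_q : q \in Q\}}$. I will define the action of $(g_i)_{i \in \bt'} \in \prod_{i \in \bt'} \cL^+_Q\cG$ on the source by the rule $\gamma^i \mapsto g_i \circ \gamma^i$, viewing $g_i$ as an automorphism of $\cE^0$, and leaving all other data unchanged. This action is plainly compatible with $q$ (which simply forgets the $\gamma^i$), and is simply transitive on fibers because any two isomorphisms $\cE^{i-1} \simto \cE^0$ differ by post\-composition with a unique automorphism of~$\cE^0$, i.e.~by a unique element of $\cL^+_Q\cG$.

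For the map $p$ of~\eqref{eqn:def-p}, the picture is similar but the action must also modify the $\beta^i$'s for $i \in \bt'$ (which get reinterpreted in the target). The source carries data $(\beta^i,\gamma^i)$ for $i \in \bt'$, and $p$ records only the composition $\beta^i_{\mathrm{tgt}} := (\gamma^i)^{-1} \circ \beta^i$. The natural action here sends $(\beta^i,\gamma^i) \mapsto (g_i \circ \beta^i, g_i \circ \gamma^i)$. This preserves $\beta^i_{\mathrm{tgt}}$ and hence is compatible with $p$; conversely, given a target point, any choice of $\gamma^i : \cE^{i-1} \simto \cE^0$ determines a unique preimage via $\beta^i = \gamma^i \circ \beta^i_{\mathrm{tgt}}$, so the fibers are again torsors under $\prod_{i \in \bt'} \cL^+_Q\cG$.

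It remains to check that both $q$ and $p$ admit sections étale-locally on the target, so that they really are principal bundles. This reduces in both cases to showing that, étale-locally on $C^Q$ (or on an affine open of the target), one can trivialize the $\cG$-bundles $\cE^{i-1}$ on $\hGamma_{\{0\} \cup \{y_q : q \in Q\}}$ for $i \in \bt'$; this is a standard consequence of $\cG$ being a smooth affine group scheme, of exactly the same flavour as~\cite[Lemma~2.3.9]{arbook}. The main bookkeeping obstacle is simply keeping track of the two different actions, and in particular verifying the compatibility of the second action with the twisting formula $\beta^i \mapsto (\gamma^i)^{-1} \circ \beta^i$; the local-triviality step is essentially formal given the smoothness of $\cG$.
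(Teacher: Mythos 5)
Your proof is correct and unpacks exactly the ``standard argument'' that the paper only cites (the paper gives no proof beyond a pointer to~\cite[Lemma~2.3.9]{arbook}): you identify the two natural actions of $\prod_{i\in\bt'}\cL^+_Q\cG$ — postcomposition on $\gamma^i$ alone for $q$, and simultaneous postcomposition on $\gamma^i$ and $\beta^i$ for $p$ (preserving $(\gamma^i)^{-1}\circ\beta^i$) — verify equivariance and simple transitivity on fibers, and correctly reduce local triviality to the \'etale-local trivializability of $\cG$-bundles on $\hGamma_{\{0\}\cup\{y_q:q\in Q\}}$, which is precisely the content of the cited lemma.
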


Suppose we have a collection of perverse sheaves $(\scA_i)_{i \in P_*}$, where
\begin{equation}\label{eqn:perv-data}
\scA_i \in \Perv_I(\Fl_G,\bk)\ \text{if $\alpha(i) = *$,} 
\qquad
\scA_i \in \Perv_{L^+G}(\Gr_G,\bk)\ \text{if $\alpha(i) \in Q$.}
\end{equation}
Via~\eqref{eqn:ctGr-gen}, regard $(\bigboxtimes_i \scA_i) \boxtimes \underline{\bk}_{(C \times \{0\})^Q}[|Q|]$ as a perverse sheaf on $(\ctGr_\alpha^{\varnothing,\varnothing})_\eta$.  Next, fix some subset $\bc \subset P$, and consider the maps
\begin{equation}\label{eqn:convdiag-pre}
\ctGr_\alpha^{\varnothing,\varnothing} \xleftarrow{q}
\ctGr_\alpha^{\varnothing,\bc} \xrightarrow{p} \ctGr_\alpha^{\bc,\varnothing}.
\end{equation}
By equivariant descent, there is a unique object
\[
\tsfC_\alpha^\bc((\scA_i)_{i \in P_*}) \in \Perv((\ctGr_\alpha^{\bc,\varnothing})_\eta,\bk)
\]
such that we have an isomorphism
\[
p_\eta^*\tsfC_\alpha^\bc((\scA_i)_{i \in P_*}) \cong q_\eta^*\left(\left(\bigboxtimes_{i \in P_*} \scA_i\right) \boxtimes \underline{\bk}_{(C \times \{0\})^Q}[|Q|]\right).
\]
As an example, in the special case where $\bc = \nmin(\alpha^{-1})$, using the identification from~\eqref{eqn:ctGr-conv}, we have
\begin{multline}\label{eqn:tsfC-alpha}
\tsfC_\alpha^{\nmin(\alpha^{-1})}((\scA_i)_{i \in P_*}) \cong 
(\scA_* \tboxtimes \cdots \tboxtimes \scA_{\max(\alpha^{-1}(*))})
\boxtimes \\
\left( \bigboxtimes_{j \in Q} (\scA_{\min(\alpha^{-1}(j))} \tboxtimes \cdots \tboxtimes \scA_{\max(\alpha^{-1}(j))}) \right)
\boxtimes \underline{\bk}_{(C \smallsetminus \{0\})^Q}[|Q|].
\end{multline}
(Here, $\tboxtimes$ denotes the usual twisted external product.)
More generally, thanks to~\eqref{eqn:ctGr-conv-dag}, the previous description remains valid over $C^{Q,\dag}$ for any $\bc$ containing $\nmin(\alpha^{-1})$:
\begin{equation}\label{eqn:tsfC-alpha-dag}
\tsfC_\alpha^{\bc}((\scA_i)_{i \in P_*})_{|C^{Q,\dag}} \cong \tsfC_\alpha^{\nmin(\alpha^{-1})}((\scA_i)_{i \in P_*})_{|C^{Q,\dag}}.
\end{equation}

\subsection{Two kinds of convolution}

We continue with the setting of~\S\ref{ss:conv-torsor}, and
assume that $\bc \supset \nmin(\alpha^{-1})$.  This implies that $\bt \subset \min(\alpha^{-1})$.  We define a map
\[
m = m_\alpha^{\bc,\bt}: \ctGr_\alpha^{\bc ,\bt} \to \ctGr_Q^{\alpha(\bc \cap \min(\alpha^{-1})), \alpha(\bt)}
\]
by sending an $R$-point $((y_j), (\cE^i), (\beta^i), (\gamma^i))$ 
to $((y_j), (\cF^j), (\tilde\beta^j), (\tilde\gamma^j))$ where
\[
\cF^j := \cE^{\max(\alpha^{-1}(j))}, \quad
\tilde\beta^j := \beta^{\min(\alpha^{-1}(j))} \circ \cdots \circ 
\beta^{\max(\alpha^{-1}(j))}, \quad
\tilde\gamma^j = \gamma^{\min(\alpha^{-1}(j))}.
\]
To check that these definitions make sense, let us record the domains and codomains of the various maps above.  Because $\bc \supset \nmin(\alpha^{-1})$, we have
\begin{gather*}
\beta^i : \cE^i_{| \hGamma_{\{0\} \cup \{y_q: q \in Q\}} \smallsetminus \Gamma_{y_j}} \simto \cE^{i-1}_{|\hGamma_{\{0\} \cup \{y_q: q \in Q\}} \smallsetminus \Gamma_{y_j}} \text{ if $\min(\alpha^{-1}(j)) < i \le \max(\alpha^{-1}(j))$,} \\
\beta^{\min(\alpha^{-1}(j))} :  \cE^i_{| \hGamma_{\{0\} \cup \{y_q: q \in Q\}} \smallsetminus \Gamma_{y_j}} \simto \cF^{j-1}_{|\hGamma_{\{0\} \cup \{y_q: q \in Q\}} \smallsetminus \Gamma_{y_j}} \text{ if $\min(\alpha^{-1}(j)) \in \bc$,} \\
\beta^{\min(\alpha^{-1}(j))} :  \cE^i_{| \hGamma_{\{0\} \cup \{y_q: q \in Q\}} \smallsetminus \Gamma_{y_j}} \simto \cE^0_{\hGamma_{\{0\} \cup \{y_q: q \in Q\}} \smallsetminus \Gamma_{y_j}} \text{ if $\min(\alpha^{-1}(j)) \notin \bc$,} \\
\gamma^{\min(\alpha^{-1}(j))} : \cF^{j-1} \simto \cE^0_{\hGamma_{\{0\} \cup \{y_q: q \in Q\}}} \text{ if $\min(\alpha^{-1}(j)) \in \bt$.} 
\end{gather*}

In the special case where $\bc = \nmin(\alpha^{-1})$ and $\bt = \varnothing$, this map can be combined with~\eqref{eqn:convdiag-pre} to obtain the following ``convolution diagram'': 
\begin{equation}\label{eqn:convdiag-alpha}
\ctGr_\alpha^{\varnothing,\varnothing} \xleftarrow{q}
\ctGr_\alpha^{\varnothing,\nmin(\alpha^{-1})} \xrightarrow{p} \ctGr_\alpha^{\nmin(\alpha^{-1}),\varnothing}
\xrightarrow{m} \ctGr_Q^{\varnothing,\varnothing}.
\end{equation}
The following lemma is immediate from~\eqref{eqn:tsfC-alpha}.

\begin{lem}\label{lem:conv-calc1}
Let $(\scA_i)_{i \in P_*}$ be as in~\eqref{eqn:perv-data}.  There is a canonical isomorphism
\begin{multline*}
(m_\eta)_*\tsfC_\alpha^{\nmin(\alpha^{-1})}((\scA_i)_{i \in P_*}) \cong 
(\scA_* \star^I \cdots \star^I \scA_{\max(\alpha^{-1}(*))})
\boxtimes \\
\left( \bigboxtimes_{j \in Q} (\scA_{\min(\alpha^{-1}(j))} \star^{L^+G} \cdots \star^{L^+G} \scA_{\max(\alpha^{-1}(j))}) \right) \boxtimes \underline{\bk}_{(C \smallsetminus \{0\})^Q} [|Q|].
\end{multline*}
\end{lem}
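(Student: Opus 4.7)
The plan is to simply unpack the constructions from~\S\ref{ss:conv-torsor} and reduce the assertion to the definition of $\star^I$ and $\star^{L^+G}$, exactly as suggested by the ``immediate from~\eqref{eqn:tsfC-alpha}'' remark in the statement.

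First, I would invoke~\eqref{eqn:tsfC-alpha}, which already exhibits $\tsfC_\alpha^{\nmin(\alpha^{-1})}((\scA_i)_{i \in P_*})$ as an external product, over the factors appearing in the decomposition~\eqref{eqn:ctGr-conv} of $(\ctGr_\alpha^{\nmin(\alpha^{-1}),\varnothing})_\eta$, of the twisted external products $\scA_* \tboxtimes \cdots \tboxtimes \scA_{\max(\alpha^{-1}(*))}$ on $LG \times^I \cdots \times^I \Fl_G$, of $\scA_{\min(\alpha^{-1}(j))} \tboxtimes \cdots \tboxtimes \scA_{\max(\alpha^{-1}(j))}$ on $LG \times^{L^+G} \cdots \times^{L^+G} \Gr_G$ for each $j \in Q$, and of the constant shifted sheaf $\underline{\bk}_{(C \smallsetminus \{0\})^Q}[|Q|]$ on the base.

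Next, I would analyze the restriction $m_\eta$ of the map $m_\alpha^{\nmin(\alpha^{-1}),\varnothing}$ to the generic locus. Unpacking the formulas $\cF^j = \cE^{\max(\alpha^{-1}(j))}$ and $\tilde\beta^j = \beta^{\min(\alpha^{-1}(j))} \circ \cdots \circ \beta^{\max(\alpha^{-1}(j))}$ from the construction of $m$, one sees that under the identifications~\eqref{eqn:ctGr-conv} and~\eqref{eqn:ctGr-gen} the map $m_\eta$ is the external product of the iterated Iwahori convolution/multiplication map $LG \times^I \cdots \times^I \Fl_G \to \Fl_G$ on the $\alpha^{-1}(*)$-block, the iterated $L^+G$-convolution maps $LG \times^{L^+G} \cdots \times^{L^+G} \Gr_G \to \Gr_G$ (one for each $j \in Q$), and the identity on $(C \smallsetminus \{0\})^Q$.

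Finally, by the very definition of the convolution products, $\scA_{i_1} \star^I \cdots \star^I \scA_{i_k}$ (respectively $\scA_{i_1} \star^{L^+G} \cdots \star^{L^+G} \scA_{i_k}$) is the proper pushforward of $\scA_{i_1} \tboxtimes \cdots \tboxtimes \scA_{i_k}$ along the corresponding iterated convolution map. The Künneth formula for proper pushforward then allows me to apply $(m_\eta)_*$ factor by factor, yielding the claimed isomorphism. The only non-routine step will be the factor-by-factor matching of $m_\eta$ with the product of the convolution maps; once this identification is in place, the conclusion follows purely formally.
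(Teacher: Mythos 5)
Your proof is correct and spells out in detail what the paper compresses into the single phrase ``immediate from~\eqref{eqn:tsfC-alpha}'': namely, that under the identifications~\eqref{eqn:ctGr-gen} and~\eqref{eqn:ctGr-conv}, the map $m_\eta$ factors block-by-block as the iterated multiplication maps defining $\star^I$ and $\star^{L^+G}$, so pushing forward the twisted external product~\eqref{eqn:tsfC-alpha} along $m_\eta$ yields the stated convolutions. This matches the paper's intended argument.
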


Next, we define a map
\[
\mu = \mu_\alpha: \ctGr_\alpha^{P, \varnothing} \to \bGr_Q
\]
that sends an $R$-point $((y_j), (\cE^i), (\beta^i))$ to $((y_j), \cE^{\max(P)}, \hat\beta)$ where 
\[
\hat\beta = \beta^*_{| \hGamma^\circ_{\{0\} \cup \{y_q: q \in Q\}}} 
\circ \cdots \circ \beta^{\max(P)-1}_{| \hGamma^\circ_{\{0\} \cup \{y_q: q \in Q\}}} \circ \beta^{\max(P)}_{| \hGamma^\circ_{\{0\} \cup \{y_q: q \in Q\}}}.
\]
We combine this with~\eqref{eqn:convdiag-pre} to obtain a second convolution diagram
\begin{equation}\label{eqn:convdiag-mu}
\ctGr_\alpha^{\varnothing,\varnothing} \xleftarrow{q}
\ctGr_\alpha^{\varnothing,P} \xrightarrow{p} \ctGr_\alpha^{P,\varnothing}
\xrightarrow{\mu_\alpha} \bGr_Q.
\end{equation}

\begin{lem}
\label{lem:conv-calc2}
Let $(\scA_i)_{i \in P_*}$ be as in~\eqref{eqn:perv-data}.  There is a canonical isomorphism
\[
(\mu_{\alpha,\eta})_*\tsfC_\alpha^{P} \bigl( (\scA_i)_{i \in P_*} \bigr)
\cong \sfC_Q \bigl( (\scB_j)_{j \in Q_*} \bigr)
\]
where the $\scB_j$'s are as in Theorem~\ref{thm:nc-main-new}.
\end{lem}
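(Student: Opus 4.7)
The plan is to factor $\mu_\alpha$ as $\mu_\alpha = \mu_{\mathrm{id}_Q} \circ m_\alpha^{P,\varnothing}$ through the intermediate ind-scheme $\ctGr_Q^{Q,\varnothing}$, and then split the claim into two steps: an analogue of Lemma~\ref{lem:conv-calc1} for the map $m_\alpha^{P,\varnothing}$, and the special case $\alpha = \mathrm{id}_Q$. The factorization itself is a direct check on $R$-points: both $\mu_\alpha$ and $\mu_{\mathrm{id}_Q} \circ m_\alpha^{P,\varnothing}$ send $((y_q),(\cE^i),(\beta^i))$ to $((y_q), \cE^{\max(P)}, \beta^*|_{\hGamma^\circ} \circ \cdots \circ \beta^{\max(P)}|_{\hGamma^\circ})$, since $m$ merely regroups the composition $\tilde\beta^j = \beta^{\min(\alpha^{-1}(j))} \circ \cdots \circ \beta^{\max(\alpha^{-1}(j))}$ before $\mu_{\mathrm{id}_Q}$ composes the $\tilde\beta^j$'s in order.

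For the first step, using the identification~\eqref{eqn:tsfC-alpha-dag} (which over $C^{Q,\dag}$ identifies $\tsfC_\alpha^{P}$ with $\tsfC_\alpha^{\nmin(\alpha^{-1})}$) together with Lemma~\ref{lem:conv-calc1} adapted to the target $\ctGr_Q^{Q,\varnothing}$, I would show that
\[
(m_\alpha^{P,\varnothing})_{\eta *} \tsfC_\alpha^{P} \bigl( (\scA_i)_{i \in P_*} \bigr) \cong \tsfC_{\mathrm{id}_Q}^{Q} \bigl( (\scC_j)_{j \in Q_*} \bigr),
\]
where $\scC_* = \scA_* \star^I \scA_{p_1} \star^I \cdots \star^I \scA_{p_k}$ for $\{p_1 < \cdots < p_k\} = \alpha^{-1}(*) \cap P$, and $\scC_q = \scB_q$ for $q \in Q$. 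The content is that $m$ collapses each column $\alpha^{-1}(j)$ by an iterated proper convolution map while preserving the twisted structure characterising $\tsfC_{\mathrm{id}_Q}^{Q}$; descent along the $\cL_Q^+\cG$-bundles of Lemma~\ref{lem:ctGr-torsor} then identifies the pushforward.

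For the second step, namely $(\mu_{\mathrm{id}_Q})_{\eta *}\tsfC_{\mathrm{id}_Q}^{Q}((\scD_j)) \cong \sfC_Q((\scD_j))$ for arbitrary $(\scD_j)_{j \in Q_*}$ satisfying~\eqref{eqn:perv-data}, I would use that $\mu_{\mathrm{id}_Q}$ is proper and restricts to an isomorphism over $C^{Q,\dag}$, so both sides agree on $C^{Q,\dag}$ with the obvious external product. Uniqueness of the IC extension then reduces the claim to perversity of the pushforward on $(\bGr_Q)_\eta$; this should follow from standard factorization arguments for the small map $\mu_{\mathrm{id}_Q}$, combined with the known perversity of convolutions of the form $\scD_* \star^I \sfZ(\cdot)$ recalled in~\S\ref{ss:Satake-category}.

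Combining the two steps yields $(\mu_\alpha)_{\eta *}\tsfC_\alpha^{P}((\scA_i)) \cong \sfC_Q((\scC_j))$; the remaining identification $\scC_* = \scB_*$ is the standard convention by which a convolution $\scA_* \star^I \scA_p$ with $\scA_p \in \Perv_{L^+G}(\Gr_G)$ is realised as $\scA_* \star^I \sfZ(\scA_p)$, as built into the monoidality of $\sfZ$. The main obstacle I expect is the $\alpha = \mathrm{id}_Q$ step: establishing perversity of the pushforward despite the non-trivial stratification of $\bGr_Q$ along the collision loci in $(C \smallsetminus \{0\})^Q$.
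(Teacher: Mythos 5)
Your factorization $\mu_\alpha = \mu_{\mathrm{id}_Q}\circ m_\alpha^{P,\varnothing}$ and the two-step reduction (first collapse each column of $\alpha$ via $m$, then handle $\alpha = \mathrm{id}_Q$) is exactly the structure of the paper's proof; the paper actually does the $\alpha = \mathrm{id}_Q$ case \emph{first} and then deduces the general case from it via the commutative diagram of Figure~\ref{fig:conv-calc} together with proper base change and Lemma~\ref{lem:conv-calc1}, but the decomposition of the problem is the same. A few points where your sketch is off or stops short:

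\emph{The $\alpha=\mathrm{id}_Q$ step.} You say that uniqueness of the intermediate extension ``reduces the claim to perversity of the pushforward.'' That is not sufficient: a perverse sheaf on $(\bGr_Q)_\eta$ extending the expected local system on $C^{Q,\dag}$ need not be its IC extension. What one actually needs (and what the paper proves, following~\cite[Proposition~2.1.9]{bbd}) is that the restriction, resp.\ corestriction, of $(\mu_{\mathrm{id}_Q,\eta})_*\tsfC_{\mathrm{id}_Q}^{Q}$ to the complement of $C^{Q,\dag}$ lies in perverse degrees $\le -1$, resp.\ $\ge 1$. The paper establishes this by stratifying $(C\smallsetminus\{0\})^Q$ by partitions, identifying the restriction of the pushforward over each stratum with an external product involving Satake convolutions, and then invoking t-exactness of $\star^{L^+G}$ (plus Verdier duality for the corestriction bound). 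Your invocation of $\mu_{\mathrm{id}_Q}$ being a ``small map'' is not justified and is not the mechanism the paper uses; smallness would be a property of the map alone and would not see the t-exactness of Satake convolution, which is what really drives the degree bound. So this is the genuine gap you yourself flagged, and the above is how the paper fills it.

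\emph{The identification $\scC_* = \scB_*$.} Your final sentence misreads~\eqref{eqn:perv-data}: for $p\in\alpha^{-1}(*)\cap P$ the input $\scA_p$ is already in $\Perv_I(\Fl_G,\bk)$, not $\Perv_{L^+G}(\Gr_G,\bk)$, so $\scC_* = \scA_*\star^I\scA_{p_1}\star^I\cdots\star^I\scA_{p_k}$ is a plain iterated convolution of $I$-equivariant sheaves on $\Fl_G$ --- and that \emph{is} what the statement of the lemma means by ``$\scB_*$ as in Theorem~\ref{thm:nc-main-new},'' matching Lemma~\ref{lem:conv-calc1}. There is no need (and no room) to invoke $\sfZ$ or its monoidality here; $\sfZ$ only enters later, in the proof of Theorem~\ref{thm:nc-main-new}, when the lemma is applied to the modified inputs $\scA'_i = \sfZ(\scA_i)$.

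\emph{The $m$-pushforward step.} The paper does not directly compute $((m_\alpha^{P,\varnothing})_\eta)_*\tsfC_\alpha^P$; it instead uses proper base change in Figure~\ref{fig:conv-calc} to reduce to the pushforward along $(m_\alpha^{\nmin(\alpha^{-1}),\varnothing})_\eta$, which is computed by Lemma~\ref{lem:conv-calc1}, then feeds the result through the torsor descent implicit in the definition of $\tsfC^Q_{\mathrm{id}_Q}$. Your sketch gestures at the right ingredients (Lemma~\ref{lem:conv-calc1}, the $\cL^+_Q\cG$-descent of Lemma~\ref{lem:ctGr-torsor}) but omits the base change argument that makes the bookkeeping precise.
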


\begin{proof}
Let us first treat the special case where $P = Q$ and $\alpha$ is the identity map.  In this case, the statement of the lemma simplifies to
\begin{equation}
\label{eqn:conv-calc-special}
(\mu_{\mathrm{id}_P, \eta})_*\tsfC_{\mathrm{id}_P}^{P}((\scA_i)_{i \in P_*}) \cong \sfC_P((\scA_i)_{i \in P_*}).
\end{equation}
The proof in this case is similar to that of~\cite[Lemma~1.7.10]{br} (the crucial step in the comparison of fusion and convolution in the Satake category). As a first step, we deduce from~\eqref{eqn:tsfC-alpha-dag} that
\[
\left( (\mu_{\mathrm{id}_P, \eta})_*\tsfC_{\mathrm{id}_P}^{P}((\scA_i)_{i \in P_*}) \right)_{| C^{P,\dag}} \cong \scA_* \boxtimes \left( \bigboxtimes_{p \in P} \scA_p \right) \boxtimes \underline{\bk}_{C^{P,\dag}}[|P|].
\]
We wish to prove that $(\mu_{\mathrm{id}_P, \eta})_*\tsfC_{\mathrm{id}_P}^{P}((\scA_i)_{i \in P_*})$ is the intermediate extension of the object above.  To do this, we use the standard characterization of the intermediate extension from~\cite[Proposition~2.1.9]{bbd} or~\cite[Lemma~3.3.4]{achar-book}: namely, it suffices to prove that the restriction, resp.~corestriction, of $(\mu_{\mathrm{id}_P, \eta})_*\tsfC_{\mathrm{id}_P}^{P}((\scA_i)_{i \in P_*})$ to the complement of $C^{P,\dag}$ in $(C \smallsetminus \{0\})^P$ lies in perverse degrees $\leq -1$, resp.~$\geq 1$. One can stratify $(C \smallsetminus \{0\})^P$ in terms of coincidences between points, with strata indexed by partitions of $P$. Given a partition $\tau$ into $m$ subsets, the preimage of the stratum $X_\tau$ attached to $\tau$ (of dimension $m$) in $\bGr_P$ identifies with $\Fl_G \times (\Gr_G)^m \times X_\tau$, and the restriction of $(\mu_{\mathrm{id}_P, \eta})_*\tsfC_{\mathrm{id}_P}^{P}((\scA_i)_{i \in P_*})$ identifies with the external product of $\scA_*$ with some convolution products of the $\scA_i$'s and with $\underline{\bk}_{X_\tau}[|P|]$. Using the fact that convolution of $L^+G$-equivariant perverse sheaves on $\Gr_G$ is t-exact (see~\S\ref{ss:Satake-category})
we see that if $m<|P|$ this restriction is in negative perverse degrees, proving the desired claim about restriction. The claim about corestrictions can be checked similarly, or deduced using Verdier duality.  This completes the proof of~\eqref{eqn:conv-calc-special}.

To prove the lemma in general, we use the commutative diagram in Figure~\ref{fig:conv-calc}.  Our problem lies along the diagonal of this diagram.  Across the top of the diagram is an instance of~\eqref{eqn:convdiag-alpha}, and down the right-hand side of the diagram is an instance of~\eqref{eqn:convdiag-mu}.  The squares involving maps labeled ``$m$'' are all cartesian.  We have
\[
(\mu_{\alpha,\eta})_*\tsfC_\alpha^{P}((\scA_i)_{i \in P_*})
\cong (\mu_{\mathrm{id}_Q,\eta})_* ((m_\alpha^{P,\varnothing})_\eta)_*\tsfC_\alpha^{P}((\scA_i)_{i \in P_*}).
\]
By proper base change, we have
\[
p_\eta^* ((m_\alpha^{P,\varnothing})_\eta)_*\tsfC_\alpha^{P}((\scA_i)_{i \in P_*})
\cong 
q_\eta^* ((m_\alpha^{\nmin(\alpha^{-1}),\varnothing})_\eta)_*\tsfC_\alpha^{\nmin(\alpha^{-1})}((\scA_i)_{i \in P_*}),
\]
and then by Lemma~\ref{lem:conv-calc1} we have
\[
((m_\alpha^{P,\varnothing})_\eta)_*\tsfC_\alpha^{P}((\scA_i)_{i \in P_*})
\cong \tsfC_{\mathrm{id}_Q}^{Q}((\scB_j)_{j \in Q_*}).
\]
Now apply $(\mu_{\mathrm{id}_Q,\eta})_*$ to this equation.  The result follows by the special case~\eqref{eqn:conv-calc-special} considered above.
\end{proof}

\begin{figure}
\[
\begin{tikzcd}
\ctGr_\alpha^{\varnothing,\varnothing} &
\ctGr_\alpha^{\varnothing,\nmin(\alpha^{-1})} \ar[l, "q"'] \ar[r, "p"] &
\ctGr_\alpha^{\nmin(\alpha^{-1}),\varnothing} \ar[r, "m"] & \ctGr_{Q}^{\varnothing, \varnothing} \\
& \ctGr_\alpha^{\varnothing, P} \ar[u, "q"] \ar[ul, "q"] \ar[r, "p"] \ar[dr, "p"'] 
& \ctGr_\alpha^{\nmin(\alpha^{-1}), \min(\alpha^{-1})} \ar[u, "q"] \ar[r, "m"] \ar[d, "p"] 
& \ctGr_{Q}^{\varnothing, Q} \ar[u, "q"'] \ar[d, "p"] \\
&& \ctGr_\alpha^{P, \varnothing} \ar[r, "m"] \ar[dr, "\mu"']
& \ctGr_{Q}^{Q, \varnothing} \ar[d, "\mu"] \\
&&& \bGr_Q
\end{tikzcd}
\]
\caption{Diagram for the proof of Lemma~\ref{lem:conv-calc2}}\label{fig:conv-calc}
\end{figure}

\subsection{Proof of Theorem~\ref{thm:nc-main-new}}
\label{ss:proof-new}

We will first establish the existence of and formula for $\Upsilon^\alpha_{\bGr_P}(\sfC_P((\scA_i)_{i \in P_*}))$.  Choose total orders on $P_*$ and $Q_*$ as~\S\ref{ss:conv-torsor}, so that $*$ is the smallest element in both sets, and such that $\alpha: P_* \to Q_*$ is order-preserving.  Consider the diagram
\[
\ctGr_P^{\varnothing,\varnothing} \xleftarrow{q}
\ctGr_P^{\varnothing,P} \xrightarrow{p} \ctGr_P^{P,\varnothing}
\xrightarrow{\mu_{\mathrm{id}_P}} \bGr_P.
\]
Its base change along $\bar\alpha: \bA^Q \to \bA^P$ is
\[
\ctGr_\alpha^{\varnothing,\varnothing} \xleftarrow{q}
\ctGr_\alpha^{\varnothing,P} \xrightarrow{p} \ctGr_\alpha^{P,\varnothing}
\xrightarrow{\mu_\alpha} \bGr_Q.
\]
To start, in view of~\eqref{eqn:ctGr-product}, Lemma~\ref{lem:product-upexist}, and Remark~\ref{rmk:thm-special}\eqref{it:thm-special-1},
\[
\Upsilon^\alpha_{\ctGr_P^{\varnothing,\varnothing}}\left(\bigboxtimes_{i \in P_*} \scA_i  \boxtimes \underline{\bk}_{(C \smallsetminus \{0\})^P}[|P|]\right)
\]
is well defined, and isomorphic to 
\[
\left( \bigboxtimes_{i \in P_*} \scA'_i \right) \boxtimes \underline{\bk}_{(C \smallsetminus \{0\})^Q}[|Q|]
\quad\text{where}\quad
\scA'_i =
\begin{cases}
\sfZ(\scA_i) & \text{if $i \in P \cap \alpha^{-1}(*)$,} \\
\scA_i & \text{otherwise.}
\end{cases}
\]
Next, by two applications of Lemma~\ref{lem:compat-smooth}, we obtain that
\[
\Upsilon^\alpha_{\ctGr_P^{P,\varnothing}}(\tsfC_{\mathrm{id}_P}^{P}((\scA_i)_{i \in P_*}))
\]
is well defined, and isomorphic to
\begin{equation}\label{eqn:sfc-mu-calc}
\tsfC_\alpha^P((\scA'_i)_{i \in P_*}).
\end{equation}
Applying Lemma~\ref{lem:conv-calc2} twice, we have
\[
(\mu_{\mathrm{id}_P,\eta})_*\tsfC_{\mathrm{id}_P}^{P}((\scA_i)_{i \in P_*}) = \sfC_P((\scA_i)_{i \in P_*}), \ \
(\mu_{\alpha,\eta})_*\tsfC_\alpha^P((\scA'_i)_{i \in P_*})
= \sfC_Q((\scB_i)_{i \in Q_*}),
\]
where the $\scB_j$'s are as in Theorem~\ref{thm:nc-main-new}.
By Lemma~\ref{lem:compat-proper}, we conclude that the $\alpha$-nearby cycles of $\sfC_P((\scA_i)_{i \in P_*})$ are well defined, and that
\[
\Upsilon^\alpha_{\bGr_P}(\sfC_P((\scA_i)_{i \in P_*})) \cong \sfC_Q((\scB_i)_{i \in Q_*}),
\]
as desired.  This completes the proof of the first part of Theorem~\ref{thm:nc-main-new}.

Next, by Lemma~\ref{lem:composition-product}, the natural map
\[
\Upsilon^{\beta\alpha}_{\ctGr_P^{\varnothing,\varnothing}}\left(\bigboxtimes_{i \in P_*} \scA_i  \boxtimes \underline{\bk}_{(C \smallsetminus \{0\})^P}[|P|]\right)
\to
\Upsilon^\beta_{\ctGr_\alpha^{\varnothing,\varnothing}}\Upsilon^\alpha_{\ctGr_P^{\varnothing,\varnothing}}\left(\bigboxtimes_{i \in P_*} \scA_i  \boxtimes \underline{\bk}_{(C \smallsetminus \{0\})^P}[|P|]\right)
\]
is an isomorphism.  By two applications of Lemma~\ref{lem:compat-compose-smooth}, we find that the map
\[
\Upsilon^{\beta\alpha}_{\ctGr_P^{P,\varnothing}}(\tsfC_P^{P}((\scA_i)_{i \in P_*})) \to
\Upsilon^\beta_{\ctGr_\alpha^{P,\varnothing}}\Upsilon^\alpha_{\ctGr_P^{P,\varnothing}}(\tsfC_P^{P}((\scA_i)_{i \in P_*}))
\]
is an isomorphism, and then Lemma~\ref{lem:compat-compose-proper} implies that so is the map
\[
\Upsilon^{\beta\alpha}_{\bGr_P}(\sfC_P((\scA_i)_{i \in P_*}) \to \Upsilon^\beta_{\bGr_Q}(\Upsilon^\alpha_{\bGr_P}(\sfC_P((\scA_i)_{i \in P_*}))
\]
This completes the proof of Theorem~\ref{thm:nc-main-new}.

\subsection{Groupoid perspective}
\label{ss:groupoid}

Let $P$ be a finite set.  Let $K = K_P$ be the set whose elements are sequences of surjective pointed maps
\begin{equation}\label{eqn:gamma-ex}
\gamma = (P_* \xrightarrow{\alpha_1} P_{1*} \xrightarrow{\alpha_2} \cdots \xrightarrow{\alpha_{k-1}} P_{k-1,*} \xrightarrow{\alpha_k} \varnothing_*).
\end{equation}
Given such a sequence $\gamma$, an \emph{elementary refinement} of $\gamma$ is a new sequence $\gamma'$ obtained by decomposing some $\alpha_i$ into a composition of two surjective maps: say
\[
\gamma' = (P_* \xrightarrow{\alpha_1} \cdots \xrightarrow{\alpha_{i-1}} P_{i-1,*} \xrightarrow{\alpha'_i} Q_* \xrightarrow {\alpha''_i} P_{i,*} \xrightarrow{\alpha_{i+1}} \cdots \xrightarrow{\alpha_k} \varnothing_*)
\]
where $\alpha_i = \alpha''_i \circ \alpha_i'$.  Make $K$ into a poset by declaring that $\gamma \preceq \gamma'$ if $\gamma'$ can be obtained from $\gamma$ by a (possibly empty) sequence of elementary refinements.  Of course, this poset can be regarded as a category in the usual way: there is a morphism $\gamma \to \gamma'$ if $\gamma \preceq \gamma'$.  This poset (resp.~category) has a unique minimal element (resp.~initial object): namely, the unique pointed map $P_* \to \varnothing_*$.

\begin{lem}\label{lem:nerve-contract}
Let $K^{\cong}$ be the groupoid obtained from $K$ by formally inverting all morphisms.  Then $K^{\cong}$ is a contractible groupoid.  
\end{lem}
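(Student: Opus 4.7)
The plan is to show $K^{\cong}$ is equivalent to the terminal groupoid by exploiting the fact that $K$ has an initial element. That element is $\gamma_{\min} = (P_* \to \varnothing_*)$, the one-step sequence: any $\gamma \in K$ is obtained from $\gamma_{\min}$ by successively decomposing the sole arrow of $\gamma_{\min}$ into the arrows of $\gamma$ via elementary refinements, so there is a (unique, since $K$ is a poset) morphism $f_\gamma : \gamma_{\min} \to \gamma$ in $K$. In particular, $K$ is connected, so all objects of $K^{\cong}$ are mutually isomorphic.

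From here I would apply a telescoping argument standard for any poset with an initial element. In $K^{\cong}$ the morphisms $f_\gamma$ become invertible. For any arrow $g : \gamma_1 \to \gamma_2$ in $K$, uniqueness of morphisms in the poset forces $g \circ f_{\gamma_1} = f_{\gamma_2}$, hence
\[
g \;=\; f_{\gamma_2} \circ f_{\gamma_1}^{-1} \qquad \text{in } K^{\cong}.
\]
Consequently, every zigzag $\gamma_0 \leftrightarrow \gamma_1 \leftrightarrow \cdots \leftrightarrow \gamma_n$ in $K$, viewed as a morphism in $K^{\cong}$, rewrites as a composition where each step from $\gamma_i$ to $\gamma_{i+1}$ equals $f_{\gamma_{i+1}} \circ f_{\gamma_i}^{-1}$ regardless of the orientation of the original arrow (for a backward arrow $h : \gamma_{i+1} \to \gamma_i$ one has $h = f_{\gamma_i} \circ f_{\gamma_{i+1}}^{-1}$, so $h^{-1} = f_{\gamma_{i+1}} \circ f_{\gamma_i}^{-1}$ as well). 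The product telescopes to $f_{\gamma_n} \circ f_{\gamma_0}^{-1}$, which depends only on the endpoints.

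Hence there is a unique morphism between any two objects of $K^{\cong}$, which identifies $K^{\cong}$ with the terminal groupoid. I do not anticipate a significant obstacle: the two ingredients---that $K$ is a poset and that $\gamma_{\min}$ is an initial object---are both immediate from the definitions, and the rest is purely formal.
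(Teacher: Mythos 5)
Your proof is correct and follows essentially the same route as the paper: both arguments hinge on the initial object $\gamma_{\min}$ of the poset $K$. The paper's proof is more terse (it simply observes that the initial object of $K$ remains initial in $K^{\cong}$ and concludes), whereas you spell out the telescoping argument that justifies this; the content is the same.
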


Recall that a groupoid is said to be \emph{contractible} if for any two objects $x$ and $y$, there is a unique morphism $x \to y$.  (This is equivalent to requiring that the nerve of the groupoid be a contractible Kan complex.)  The following standard argument applies to any poset with a unique minimal (or maximal) element.

\begin{proof}
The initial object $e$ of $K$ remains an initial object in $K^{\cong}$, so there is a unique morphism from $e$ to every other object.  This implies that every object of $K^{\cong}$ is initial, and then that $K^{\cong}$ is contractible.
\end{proof}

Let $\gamma$ be as in~\eqref{eqn:gamma-ex}.  For brevity, we introduce the notation
\[
\underline{\Upsilon}^\gamma(\sfC_P((\scA_i)_{i \in P_*})) :=  
\Upsilon_{\bGr_{P_{k-1}}}^{\alpha_k} \circ \cdots \circ \Upsilon_{\bGr_{P_1}}^{\alpha_2} \circ \Upsilon_{\bGr_P}^{\alpha_1}(\sfC_P((\scA_i)_{i \in P_*})).
\]
(Here, all the functors are well defined thanks to Theorem~\ref{thm:nc-main-new}.)

\begin{prop}
\label{prop:bez-fink}
For any object $\scA_*$ in $\Perv_I(\Fl_G,\bk)$, and any collection of objects $(\scA_p)_{p \in P}$ in $\Perv_{L^+ G}(\Gr_G,\bk)$, there is a contractible groupoid whose objects are of the form $\underline{\Upsilon}^\gamma(\sfC_P((\scA_i)_{i \in P_*}))$.
\end{prop}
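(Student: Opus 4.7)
The plan is to upgrade the assignment $\gamma \mapsto \underline{\Upsilon}^\gamma := \underline{\Upsilon}^\gamma(\sfC_P((\scA_i)_{i \in P_*}))$ to a functor $F : K \to \Perv_I(\Fl_G, \bk)$ (noting $\bGr_\varnothing = \Fl_G$, so all $\underline{\Upsilon}^\gamma$ live in this category) that sends every morphism to an isomorphism. This functor will then factor through the localization $K^\cong$, and I will take the desired contractible groupoid to be $K^\cong$ itself with objects relabeled via $F$; contractibility is then immediate from Lemma~\ref{lem:nerve-contract}.

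The key observation for an elementary refinement $\gamma \to \gamma'$ splitting $\alpha_i$ as $\alpha''_i \circ \alpha'_i$ is that the intermediate object $\mathscr{F} := \Upsilon^{\alpha_{i-1}}_{\bGr_{P_{i-2}}} \circ \cdots \circ \Upsilon^{\alpha_1}_{\bGr_P}(\sfC_P((\scA_j)_{j \in P_*}))$ is canonically of the form $\sfC_{P_{i-1}}((\scB_j)_{j \in (P_{i-1})_*})$ for an explicit collection $(\scB_j)$, by iterated application of the first part of Theorem~\ref{thm:nc-main-new}. Hence the second part of the same theorem, applied to $\mathscr{F}$ together with the maps $\alpha'_i, \alpha''_i$, yields that the natural map~\eqref{eqn:commnc-compose} from $\Upsilon^{\alpha_i}_{\bGr_{P_{i-1}}}(\mathscr{F})$ to $\Upsilon^{\alpha''_i}_{\bGr_{P_i}} \Upsilon^{\alpha'_i}_{\bGr_{P_{i-1}}}(\mathscr{F})$ is an isomorphism. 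Applying the remaining $\Upsilon^{\alpha_j}$ for $j > i$ (which preserve isomorphisms by functoriality) produces the required isomorphism $\underline{\Upsilon}^\gamma \xrightarrow{\sim} \underline{\Upsilon}^{\gamma'}$.

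To extend $F$ to non-elementary morphisms $\gamma \preceq \gamma'$, I pick any chain of elementary refinements and compose the associated isomorphisms; the content of functoriality is that the outcome does not depend on the chain. This reduces to two coherence checks: (a) for refinements at distinct positions $i < j$ of the sequence, the induced isomorphisms commute, which follows from the naturality of~\eqref{eqn:commnc-compose} combined with Lemmas~\ref{lem:compat-compose-smooth} and~\ref{lem:compat-compose-proper}; and (b) for refinements at a single position $i$, any two chains realizing the same final factorization of $\alpha_i$ give the same composite, which is a direct consequence of Lemma~\ref{lem:commnc-assoc} applied to $\mathscr{F}$. The main obstacle will be the bookkeeping in check (b), reducing arbitrary sequences of refinements at a single position to the three-map associativity of Lemma~\ref{lem:commnc-assoc}; this is formal but delicate. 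Once $F$ is established as a functor taking values in isomorphisms, the conclusion follows at once from Lemma~\ref{lem:nerve-contract}.
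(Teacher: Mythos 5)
Your proposal follows essentially the same route as the paper: define $F$ on objects by $\gamma \mapsto \underline{\Upsilon}^\gamma(\sfC_P((\scA_i)_{i \in P_*}))$, use Theorem~\ref{thm:nc-main-new} to produce the isomorphism on each elementary refinement, invoke Lemma~\ref{lem:commnc-assoc} to verify coherence, and conclude with Lemma~\ref{lem:nerve-contract}. Your more explicit split into checks (a) and (b) is a sensible expansion of the paper's terse ``by Lemma~\ref{lem:commnc-assoc}, this rule extends to arbitrary morphisms''; one small caveat is that for check (a) the relevant ingredient is naturality of~\eqref{eqn:commnc-compose} in the input perverse sheaf (which holds by construction), rather than Lemmas~\ref{lem:compat-compose-smooth} and~\ref{lem:compat-compose-proper}, whose role is to establish the iso of Theorem~\ref{thm:nc-main-new} at each single level rather than the commutativity across levels.
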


\begin{proof}
Define a functor $F: K \to \Perv_I(\Fl_G,\bk)$ as follows: on objects, we set 
\[
F(\gamma) = \underline{\Upsilon}^\gamma(\sfC_P((\scA_i)_{i \in P_*})).
\]
If $\gamma \to \gamma'$ is an elementary refinement, then Theorem~\ref{thm:nc-main-new} gives us an isomorphism
\[
F(\gamma \to \gamma'): F(\gamma) \simto F(\gamma').
\]
By Lemma~\ref{lem:commnc-assoc}, this rule extends to arbitrary morphisms in $K$, so $F$ is a well-defined functor.  Since $F$ sends every morphism in $K$ to an isomorphism, it extends uniquely to a faithful functor $F^{\cong}: K^{\cong} \to \Perv_I(\Fl_G,\bk)$.  Its image is a (non-full) subcategory of $\Perv_I(\Fl_G,\bk)$ that is a contractible groupoid by Lemma~\ref{lem:nerve-contract}.
\end{proof}

\begin{rmk}
Here are some examples of objects in the groupoid from Proposition~\ref{prop:bez-fink} in the case where $P = \{1,\ldots,n\}$ (cf.~Remark~\ref{rmk:thm-special}).  Choose an enumeration $\{\chi_1,\ldots,\chi_n\}$ of $\{1,\ldots,n\}$, and let $\gamma_\chi$ be the sequence
\[
P_* = \{\chi_1,\ldots,\chi_n\}_* \xrightarrow{\alpha_1} \{\chi_2,\ldots,\chi_n\}_* \to 
\cdots \to \{\chi_{n-1},\chi_n\}_* \xrightarrow{\alpha_{n-1}} \{\chi_n\}_* \xrightarrow{\alpha_n} \varnothing_*,
\]
where $\alpha_i(\chi_i) = *$ and $\alpha_i(\chi_j) = \chi_j$ for $j > i$.  Let $f_{\chi_i}$ denote the composition 
\[
\bGr_{\{1, \ldots, n\}} \times_{\bA^{\{1,\ldots,n\}}} \bA^{\{\chi_i, \chi_{i+1},\ldots,\chi_n\}} \to \bA^{\{\chi_i, \chi_{i+1},\ldots,\chi_n\}} \to \bA^{\{\chi_i\}}.
\]
By Example~\ref{ex:Q-empty} and Lemma~\ref{lem:classical-nc}, we have
\[
\underline{\Upsilon}^{\gamma_\chi}((\scA_i)_{i \in P_*}) \cong
\Psi_{f_{\chi_n}} \cdots \Psi_{f_{\chi_1}}(\sfC_P((\scA_i)_{i \in P_*})).
\]
On the other hand, if we let $\gamma_{\min}$ denote the unique map $P_* \to \varnothing_*$, then Theorem~\ref{thm:nc-main-new} says that
\[
\underline{\Upsilon}^{\gamma_{\min}}((\scA_i)_{i \in P_*}) \cong \scA_* \star^I \left(\bigstar_{p \in P}^I \sfZ(\scA_p) \right).
\]
Our considerations therefore fully justify~\cite[Proposition~5.2.1]{bbfk}.
\end{rmk}

\end{document}